\documentclass[12pt,a4paper]{amsart}
\usepackage{hyphenat}
\usepackage{fix-cm}
\usepackage[a4paper,left=1.8cm,right=1.8cm,top=2.5cm,bottom=2.5cm]{geometry}

\usepackage{microtype}
\usepackage[table]{xcolor}
\usepackage{amsmath}
\usepackage{amstext}
\usepackage{amsfonts}
\usepackage{amssymb}
\usepackage{soul}
\usepackage{amsbsy}
\usepackage{latexsym}
\usepackage[T1]{fontenc}
\usepackage{lmodern}
\usepackage[all]{xy}
\usepackage{tikz-cd}
\usepackage{hhline}

\usepackage{enumitem}
\usepackage{float}
\setlist{nosep}
\setlist[enumerate]{label=(\roman*)}
 
\usepackage{etoolbox}
\AtBeginEnvironment{thebibliography}{%
  \setlength{\itemsep}{0pt}%
  \setlength{\parskip}{0pt}%
  \setlength{\topsep}{0pt}%
  \setlength{\partopsep}{0pt}%
}
 
\newlist{defenum}{enumerate}{3}
\setlist[defenum]{label=(\alph*),leftmargin=*,align=left}
\newlist{defsubenum}{enumerate}{1}
\setlist[defsubenum]{label=(\roman*),leftmargin=*,align=left}
\usepackage{dsfont}
\usepackage{makecell}
\usepackage{graphicx}
\usepackage{caption}
\usepackage{colortbl}
\usepackage{booktabs} 
\usepackage{thmtools}
\allowdisplaybreaks

\usepackage[colorlinks=true,pagebackref,hypertexnames=false,linkcolor=red,citecolor=blue,filecolor=blue,urlcolor=blue]{hyperref}
 
\usepackage[nameinlink,noabbrev]{cleveref}
\usepackage{autonum}
\usepackage[alphabetic,backrefs]{amsrefs}
 
\makeatletter
\renewcommand\normalsize{%
    \@setfontsize\normalsize{11.7}{14pt plus .3pt minus .3pt}%
    \abovedisplayskip 10\p@ \@plus4\p@ \@minus4\p@
    \abovedisplayshortskip 6\p@ \@plus2\p@
    \belowdisplayshortskip 6\p@ \@plus2\p@
    \belowdisplayskip \abovedisplayskip}
\renewcommand\small{%
    \@setfontsize\small{10}{12\p@ plus .2\p@ minus .2\p@}%
    \abovedisplayskip 8.5\p@ \@plus4\p@ \@minus1\p@
    \belowdisplayskip \abovedisplayskip
    \abovedisplayshortskip \abovedisplayskip
    \belowdisplayshortskip \abovedisplayskip}
\renewcommand\footnotesize{%
    \@setfontsize\footnotesize{8.5}{9.25\p@ plus .1pt minus .1pt}
    \abovedisplayskip 6\p@ \@plus4\p@ \@minus1\p@
    \belowdisplayskip \abovedisplayskip
    \abovedisplayshortskip \abovedisplayskip
    \belowdisplayshortskip \abovedisplayskip}
\ifdefined\pdfpagewidth
\else
\fi
\calclayout
\makeatother
 
  {}              
 
{\color{blue}} 
{}              

\newcounter{num}[section] %
\renewcommand{\thenum}{\thesection.\arabic{num}} %

\newtheorem{Mtheorem}{Theorem}

\newenvironment{theo}
{\refstepcounter{num}%
 \bigskip\noindent{\bf Theorem~\arabic{section}.\arabic{num}.}\quad\itshape}
{\par\bigskip}

\newenvironment{prop}
{\refstepcounter{num}%
 \bigskip\noindent{\bf Proposition~\arabic{section}.\arabic{num}.}\quad\itshape}
{\par\bigskip}

\newenvironment{cor}
{\refstepcounter{num}%
 \bigskip\noindent{\bf Corollary~\arabic{section}.\arabic{num}.}\quad\itshape}
{\par\bigskip}

\newenvironment{lemma}
{\refstepcounter{num}%
 \bigskip\noindent{\bf Lemma~\thenum.}\quad\itshape}
{\par\bigskip}

\newenvironment{example}
{\refstepcounter{num}%
 \bigskip\noindent{\bf Example~\arabic{section}.\arabic{num}.}\quad}
{\par\bigskip} 

\newenvironment{remark}
{\refstepcounter{num}%
 \bigskip\noindent{\bf Remark~\arabic{section}.\arabic{num}.}\quad}
{\par\bigskip}

\newenvironment{defin}
{\refstepcounter{num}%
 \bigskip\noindent{\bf Definition~\arabic{section}.\arabic{num}.}\quad}
{\par\bigskip}

\newcommand{\tr}{\mathop{\rm tr}}

\renewcommand{\d}{\mathrm{d}}

\newcommand{\ad}{\mathrm{ad}}

\newcommand\restr[2]{{
  \left.\kern-\nulldelimiterspace 
  #1 
  \littletaller 
  \right|_{#2} 
}}

\newcommand{\littletaller}{\mathchoice{\vphantom{\big|}}{}{}{}}

\newcommand{\mylabel}[1]{}

\begin{document}

 \title{On the generalized geometry of almost abelian solvmanifolds}

\author[Andrade]{Mauro de Andrade Pinto}
\address{Instituto de Matemática, Estatística e Computação Científica (IMECC) da Universidade Estadual de Campinas (Unicamp), Cidade Universitária, Campinas - SP, 13083-856, Brazil.}
\email{mauroap1415@gmail.com}

\author[Camponês do Brasil]{Letícia Camponês do Brasil Maia}
\address{Instituto de Matemática, Estatística e Computação Científica (IMECC) da Universidade Estadual de Campinas (Unicamp), Cidade Universitária, Campinas - SP, 13083-856, Brazil.}
\email{leticiacampones@gmail.com}

\author[Cavenaghi]{Leonardo F. Cavenaghi}
\address{Institute of Mathematics and Informatics, Bulgarian Academy of Sciences, Sofia, Bulgaria}
\email{leonardofcavenaghi@gmail.com}

\author[Martins]{Pedro Antonio Muniz Martins}
\address{Instituto de Matemática, Estatística e Computação Científica (IMECC) da Universidade Estadual de Campinas (Unicamp), Cidade Universitária, Campinas - SP, 13083-856, Brazil.} 
\email{pedroa.muniz9@gmail.com}

\begin{abstract}
We study left-invariant generalized complex structures on almost abelian Lie groups $G_A$ with Lie algebra $\mathfrak g_A=\mathbb Re_0\ltimes_A\mathfrak h$, where $\mathfrak h$ is an abelian ideal of codimension one, and on their compact quotients. First, when $A$ is diagonalizable over $\mathbb R$, we characterize all admissible types by pairings of its eigenvalues and characterize the structures admitting a closed invariant pure-spinor generator. For general $A$, we obtain Jordan-theoretic type bounds and a construction using a complex quotient and a symplectic ideal. Finally, in dimension six, we establish nonexistence results and give explicit intermediate-type constructions. 

\end{abstract}

\keywords{Generalized complex structure \and Almost abelian Lie group \and Solvmanifold \and
Generalized Calabi--Yau}


\maketitle

\section{Introduction}
Generalized complex geometry, in the sense of Hitchin, Gualtieri and Cavalcanti
\cites{Hitchin2003, Gualtieri2011, Cavalcanti2004}, provides a common framework for symplectic and
complex geometry. Let $M$ be a smooth manifold of real dimension $2n$ and endow $T_M\oplus T^*_M$ with
the split-signature pairing
$$\langle X + \xi, Y + \eta \rangle := \tfrac{1}{2}\left( \xi(Y) + \eta(X) \right).$$
A \emph{generalized complex structure} on $M$ is an endomorphism $\mathcal J$ of $T_M\oplus T^*_M$
with $\mathcal J^2=-\mathrm{Id}$, orthogonal for this pairing, whose $\mathbf i$-eigenbundle is
involutive for the Courant bracket. Every symplectic structure on $M$ is an example of \emph{type
$0$}, and every complex structure on $M$ is an example of \emph{type $n$}. As we recall below, the
type is a pointwise invariant and may jump from point to point; for left-invariant structures on a
Lie group, however, it is constant. Let us be precise.

\medskip

Consider the complexification
$$\left(T_M\oplus T_M^\ast\right)\otimes_{\mathbb R}\mathbb C=L\oplus \bar L,\qquad
L:=\ker\left(\mathcal J-\mathbf{i}\,\mathrm{Id}\right).$$
We ask generalized complex structures to be, by definition, \emph{integrable}, which means
$[[C^\infty(L), C^\infty(L)]]\subset C^\infty(L)$, after the obvious extension of the Courant bracket
$[[\cdot,\cdot]]$ to the complexification. The condition $\mathcal J^2=-\mathrm{Id}$ gives
$L \cap \overline{L} = \{0\}$, while orthogonality forces $L$ to be isotropic. By rank count, $L$ is
in fact maximal isotropic. As such, $L$ is the \emph{Clifford annihilator}
$$L = \{v \in (T_M \oplus T^{*}_M) \otimes_{\mathbb R} \mathbb{C} \mid v \cdot K_L = 0 \}$$
of a unique line sub-bundle $K_L \subset \bigwedge^\bullet T^*_M \otimes_{\mathbb R} \mathbb{C}$, where
the dot denotes the Clifford action $(X+\xi)\cdot\alpha=\iota_X\alpha+\xi\wedge\alpha$. A line whose
annihilator is maximal isotropic is, by definition, a \emph{pure} spinor line, and we call $K_L$
\emph{the canonical line bundle of $\mathcal J$}. It admits, around each point of $M$, a smooth local
generator $\rho \in C^{\infty}(U, \bigwedge^\bullet T^*_M \otimes_{\mathbb R} \mathbb{C})$, and purity of
$\rho$ means that at each point $x \in U$ one can write
$$\rho_x = \exp(B+\mathbf{i}\omega)\wedge\Omega,$$ 
where $B,~\omega$ are real two-forms and, for $k>0$, $\Omega=\theta_1\wedge\cdots\wedge\theta_k$ with independent
complex covectors. At type zero, $\Omega$ is a nonzero complex
scalar; after normalizing the generator, it can be taken to be $1$. The integer $k$ is the
\emph{type} of $\mathcal J$ at $x$, and it is precisely this integer that need not be constant. When
$\rho$ can moreover be chosen to be a global nowhere-vanishing and \emph{closed} generator, the structure is called \emph{generalized Calabi--Yau}. We separately say that an invariant generalized complex structure \emph{admits a closed invariant generator} when a closed left-invariant pure spinor generates its canonical line. Throughout this paper, an \emph{invariant generalized Calabi--Yau
structure} means this stronger condition. Invariance of the generalized
complex structure alone does not require its closed generator to be
invariant. Its type, however, is constant and determined by the Lie algebra.

\medskip

This paper concerns left-invariant generalized complex structures on almost abelian Lie groups and,
consequently, on the associated solvmanifolds. Recall that a solvmanifold is a compact quotient
$\Gamma\backslash G$, where $G$ is a simply connected solvable Lie group and $\Gamma<G$ is a lattice,
that is, a discrete cocompact subgroup. Since the elements of $\Gamma\backslash G$ are the cosets
$\Gamma g$, every left-invariant tensor on $G$ is in particular $\Gamma$-invariant and therefore
descends to the quotient. An \emph{almost abelian solvmanifold} is one of the form $\Gamma\backslash G_A$, with $G_A$ a simply connected almost abelian Lie group. The invariant generalized complex structures on $\Gamma\backslash G_A$ thus correspond to the left-invariant ones on $G_A$, which in turn are determined at the level of the Lie
algebra $\mathfrak g_A=\mathbb Re_0\ltimes_A\mathfrak h$, where $\mathfrak h$ is an abelian ideal of
codimension one and the whole bracket is encoded in the matrix $A$ of $\ad_{e_0}|_{\mathfrak h}$. We
work throughout at the level of $\mathfrak g_A$, passing to the quotients when the existence of a
lattice matters; recall that this forces $\tr A=0$, as a Lie group admitting a lattice is unimodular.
A central question organizes the work: which types of generalized complex structures occur on $\mathfrak{g}_A$. This is answered completely when $A$ is diagonalizable over $\mathbb R$. In general, we give an algebraic criterion for the existence of such structures and we constrain the admissible types by Jordan-theoretic bounds. Finally, in dimension $6$ we give nonexistence results and explicit constructions.



\medskip



In the diagonalizable case, by Proposition \ref{Conjugate}, we may assume $A$ is diagonal. The classification is then expressed entirely through the spectrum of $A$ and governed by two patterns: pairs of \emph{identical} eigenvalues and pairs of \emph{opposite} eigenvalues. Every admissible spectrum in this setting is a combination of the two, together with one remaining eigenvalue.

\begin{Mtheorem}\label{thmB}
Let $G_A$ be a $2n$-dimensional almost abelian Lie group with $A$ diagonal. Then $G_A$ admits a
left-invariant generalized complex structure of type $k$ if and only if the eigenvalues of $A$, listed
with multiplicity, can be reordered as
$$\{\underbrace{\lambda_1,\lambda_1,\ldots,\lambda_{k},\lambda_{k}}_{k\text{ identical pairs}},\ \mu,\
\underbrace{\zeta_1,-\zeta_1,\ldots,\zeta_{n-k-1},-\zeta_{n-k-1}}_{n-k-1\text{ opposite pairs}}\},
\qquad k\le n-1,$$
or as
$$\{\underbrace{\lambda_1,\lambda_1,\ldots,\lambda_{k-1},\lambda_{k-1}}_{k-1\text{ identical pairs}},\
\mu,\ \underbrace{\zeta_1,-\zeta_1,\ldots,\zeta_{n-k},-\zeta_{n-k}}_{n-k\text{ opposite pairs}}\},
\qquad k\ge 1 .$$
\end{Mtheorem}

\medskip

The extremal cases $k=0$ and $k=n$ of Theorem \ref{thmB} are the classification of left-invariant
symplectic and complex structures on almost abelian Lie groups obtained in \cite{Arroyo2025}, which is
also our starting point in the proof; the case $k=0$ recovers as well the classification of left-invariant symplectic structures of \cite{LuisPedro2022}. Two consequences are worth emphasizing. First, the two spectral
forms coincide for consecutive types, so the types come in adjacent pairs: a group that admits a
structure of type $k$ admits one of type $k+1$ or of type $k-1$. Second, and in contrast with the
extremal cases, there exist almost abelian Lie groups carrying only structures of intermediate type,
hence neither complex nor symplectic. Requiring in addition that the pure spinor be closed refines
Theorem \ref{thmB} by a single linear condition on the eigenvalues involved.

\begin{Mtheorem}\label{thmC}
In the situation of Theorem \ref{thmB}, there exists a structure of type $k$ admitting a closed invariant pure-spinor generator if and only if the eigenvalues admit one of the stated reorderings with $\sum_{i=1}^k\lambda_i=0$ in the first form, or $\mu+\sum_{i=1}^{k-1}\lambda_i=0$ in the second.
\end{Mtheorem}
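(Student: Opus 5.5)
The sufficiency direction will reuse the explicit structures from the proof of Theorem \ref{thmB}; necessity will come from a weight-space analysis of $\ker d$ on invariant forms.

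\emph{Setup.} Choose a diagonalizing basis $e_1,\dots,e_{2n-1}$ of $\mathfrak h$ with $Ae_j=a_je_j$, and let $e^0,e^1,\dots,e^{2n-1}$ be the dual basis. Then
$$de^0=0,\qquad de^j=-a_j\,e^0\wedge e^j .$$
More generally, every invariant form can be written as $\alpha=\alpha_{\mathfrak h}+e^0\wedge\alpha'$ with $\alpha_{\mathfrak h},\alpha'\in\bigwedge\mathfrak h^\ast_{\mathbb C}$. Its differential is
$$d\alpha=-e^0\wedge D\alpha_{\mathfrak h},$$
where $D$ is the derivation of $\bigwedge\mathfrak h^\ast$ extending $A^T$. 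On the monomial $e^{j_1}\wedge\cdots\wedge e^{j_r}$, $D$ acts by the weight $a_{j_1}+\dots+a_{j_r}$. Hence an invariant form is closed if and only if its $\mathfrak h$-part lies in the weight-zero subspace of $D$.

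\emph{Sufficiency.} I will write down the pure spinors realizing Theorem \ref{thmB} and compute their differentials.

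For an identical pair of eigenvalues $\lambda_i$, with eigencovectors $e^{p},e^{q}$, put $\theta_i=e^p+\mathbf i e^q$. This gives $d\theta_i=-\lambda_i e^0\wedge\theta_i$.

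For an opposite pair $\zeta,-\zeta$, the $2$-form $e^p\wedge e^q$ is closed. Summing these forms gives a closed $\omega_0$.

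For the remaining eigenvalue $\mu$, with eigencovector $e^m$, there are two choices:
\begin{itemize}
\item in the first form, add the closed $2$-form $e^0\wedge e^m$ to $\omega_0$;
\item in the second form, use the closed $\omega_0$ directly and set $\theta_k=e^0+\mathbf i e^m$. Then $d\theta_k=-\mu\, e^0\wedge\theta_k$, because $e^0\wedge\theta_k=\mathbf i\,e^0\wedge e^m$.
\end{itemize}

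Let $\rho=\exp(\mathbf i\omega)\wedge\theta_1\wedge\cdots\wedge\theta_k$. Using the Leibniz rule and $e^0\wedge e^0=0$, we get
$$d\rho=-c\,e^0\wedge\rho,$$
where $c=\sum_{i=1}^k\lambda_i$ in the first form and $c=\mu+\sum_{i=1}^{k-1}\lambda_i$ in the second. This $\rho$ is the generator already used in Theorem \ref{thmB}, so it is pure with integrable annihilator. It is closed exactly when $c=0$. Rescaling $\rho$ by a constant does not change closedness, so the stated linear conditions suffice.

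\emph{Necessity.} Let $\rho=e^{B+\mathbf i\omega}\wedge\Omega$ be a closed invariant pure spinor of type $k$, with $\Omega=\theta_1\wedge\cdots\wedge\theta_k$. Let $W=\mathrm{span}_{\mathbb C}\{\theta_j\}\subset\mathfrak g^\ast_{\mathbb C}$.

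Closedness means $D\rho_{\mathfrak h}=0$. Since $D$ preserves degree, every homogeneous component of $\rho_{\mathfrak h}$ is $D$-closed, i.e. has weight zero. I split into two cases according to whether $e^0$ lies in $W+\overline W$ modulo $\mathfrak h^\ast$. More concretely, the cases are whether the restriction $\Omega|_{\mathfrak h}$ is nonzero, or whether $e^0$ must be used, so that after a change of generators $\theta_k\equiv e^0$ mod $\mathfrak h^\ast$.

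Case 1: $\Omega|_{\mathfrak h}\neq0$. The lowest-degree component of $\rho_{\mathfrak h}$ is the decomposable form $\Omega|_{\mathfrak h}$. Since $D\Omega|_{\mathfrak h}=0$, its annihilator, which is determined by the line of $\Omega|_{\mathfrak h}$, is preserved. So the restricted span $W_{\mathfrak h}$ is $A^T$-invariant. The weight of $\Omega|_{\mathfrak h}$ is then $\operatorname{tr}(A^T|_{W_{\mathfrak h}})$, and closedness forces this trace to vanish.

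Case 2: $\theta_k\equiv e^0$ mod $\mathfrak h^\ast$. Contracting with $e_0$ shows that the lowest component of $\rho_{\mathfrak h}$ involves $\theta_1\wedge\cdots\wedge\theta_{k-1}$ wedged with the $\mathfrak h$-part of $\theta_k$. A similar trace condition follows, which now includes the eigenvalue attached to that $\mathfrak h$-part.

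\emph{Main obstacle.} The remaining step is to identify these traces with $\sum\lambda_i$, respectively $\mu+\sum\lambda_i$, for a reordering of the type in Theorem \ref{thmB}. I would show this as follows.
\begin{itemize}
\item Because $A$ is real diagonalizable, $W_{\mathfrak h}$ and $\overline{W_{\mathfrak h}}$ are both invariant with the same real eigenvalues.
\item The condition $W\cap\overline W=0$ forces the real eigencovectors in $W_{\mathfrak h}+\overline{W_{\mathfrak h}}$ to come in identical pairs, one pair for each $\theta$.
\item The complementary part carries the nondegenerate form $\omega$. Its integrability, exactly as in the proof of Theorem \ref{thmB} via \cite{Arroyo2025}, pairs the remaining eigenvalues oppositely, with $\mu$ left over.
\end{itemize}
I expect the delicate point to be the invariance argument in Case 1 when $\Omega|_{\mathfrak h}$ degenerates, that is, when it is nonzero but does not have full rank $k$. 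I would try first to rule this out by a change of generators, replacing $\theta_j$ by $\theta_j-c_j\theta_k$, so that at most one generator involves $e^0$.
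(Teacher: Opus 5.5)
Your architecture is the paper's. Sufficiency comes from the explicit spinors of Theorem \ref{thmB}, which satisfy $\mathrm{d}\rho=-\Lambda e^0\wedge\rho$. Necessity comes from showing that closedness forces $\Psi(\Omega_{ab})=0$, hence $\operatorname{tr}(A^T|_{W_{\mathfrak h}})=0$ (Lemma \ref{lemma_Psi} with Theorem \ref{thm-Calabi-Yau}), and then matching this trace with the sum attached to a reordering. The trace step needs no case split. Non-degeneracy forces $\Omega|_{\mathfrak h}\neq0$, since otherwise $\Omega=e^0\wedge\Omega_0$ and $\Omega\wedge\overline\Omega=0$. A nonzero decomposable $k$-form always has a $k$-dimensional annihilator. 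So your Case~1 hypothesis always holds, and the ``delicate point'' you anticipate cannot occur.

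The real gap is in the matching step, which carries the content of the statement, because your Case~2 rests on the wrong criterion. What decides between the two forms is whether $W_{\mathfrak h}\cap\overline{W_{\mathfrak h}}$ is zero or a line. Since $W\cap\overline W=0$ and restriction to $\mathfrak h$ kills only $\mathbb C e^0$, this is equivalent to your first phrasing ($e^0\notin W+\overline W$ versus $e^0\in W+\overline W$), and it shows the intersection is at most one-dimensional.

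Your ``more concrete'' version, that some generator is $\equiv e^0$ modulo $\mathfrak h^*$, is not equivalent. Take $n=2$, $A=\operatorname{diag}(0,0,1)$ and the spinor $\exp(\mathbf i\,e^0\wedge e^3)\wedge(e^0+e^1+\mathbf i e^2)$. It is closed, non-degenerate, of type $1$, and of the first form (identical pair $\{0,0\}$, singleton $1$). Yet it lands in your Case~2, where you would read the eigenvalue $0$ of the $\mathfrak h$-part $e^1+\mathbf i e^2$ as the singleton of a second-form reordering. No second-form reordering of $\{0,0,1\}$ has singleton $0$. In general that $\mathfrak h$-part need be neither real nor an eigencovector. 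Your bullet ``one pair for each $\theta$'' is also false whenever the intersection is a line.

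The correct bookkeeping takes an eigenbasis of $A^T|_{W_{\mathfrak h}}$ adapted to $W_{\mathfrak h}\cap\overline{W_{\mathfrak h}}$. A real generator of that line, if present, carries $\mu$. The real and imaginary parts of the remaining eigenvectors give the identical pairs. Thus the identical-pair eigenvalues, together with $\mu$ in the second case, are exactly the spectrum of $A^T|_{W_{\mathfrak h}}$.

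The opposite pairs then do not come from $\omega$ being non-degenerate on the complement $V'$, which is odd-dimensional in the first form. You need two facts, obtained as in Cases~1 and~2 of the proof of Theorem \ref{thm:os-meninos-vem-como}:
\begin{itemize}
\item $\Psi(\beta_{V'V'})=0$, from the bidegree $(k,2)$ part of $\Psi(\beta_{ab})\wedge\Omega_{ab}=0$;
\item $\omega_{V'V'}^{\,n-k-1}\neq0$ (respectively $\omega_{V'V'}^{\,n-k}\neq0$), from a bidegree count in $\omega^{n-k}\wedge\Omega\wedge\overline\Omega\neq0$.
\end{itemize}
Only the reordering produced there is guaranteed to make $\operatorname{tr}(A^T|_{W_{\mathfrak h}})$ equal to $\sum\lambda_i$, respectively $\mu+\sum\lambda_i$.
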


We stress that Theorems \ref{thmB} and \ref{thmC} are stated for the Lie group $G_A$, without restrictions
 on $\tr A$; imposing the existence of a lattice adds the condition $\tr A=0$, which
determines the remaining eigenvalue $\mu$ in terms of the others. In Example \ref{ex:dim10-lattice}, we provide a generalized complex structure on a solvmanifold after showing how a lattice can be made compatible with data defining a left-invariant generalized complex structure.

\medskip

Based on the approach developed in~\cite{ACK}, we also give an algebraic criterion for the existence of a generalized complex structure of prescribed type.
\begin{Mtheorem}\label{Mtheo:Gen_com_struct_alg}
Let $G_A$ be a $2n$-dimensional almost abelian Lie group with Lie algebra
$\mathfrak{g}_A = \mathbb{R}e_0 \ltimes_A \mathfrak{h}$. Suppose that $\mathfrak{h}$ decomposes into
$A$-invariant subspaces $\mathfrak{h} = \mathfrak{h}_c \oplus \mathfrak{h}_s$, with
$\dim \mathfrak{h}_c = 2k-1$ and $\dim \mathfrak{h}_s = 2(n-k)$, where $1\le k\le n$, and write $A_c=A|_{\mathfrak h_c}$
and $A_s=A|_{\mathfrak h_s}$. Assume that:
\begin{enumerate}
    \item the quotient Lie algebra $\mathfrak{g}_c := \mathfrak{g}_A/\mathfrak{h}_s \cong
    \mathbb{R}e_0 \ltimes_{A_c} \mathfrak{h}_c$ admits a complex structure;
    \item there exists a closed $2$-form $\omega \in \bigwedge^2 \mathfrak{g}_A^*$ restricting to a
    non-degenerate form $\sigma \in \bigwedge^2 \mathfrak{h}_s^*$ on the ideal $\mathfrak{h}_s$.
\end{enumerate}
Then $G_A$ admits a left-invariant generalized complex structure of type $k$.
\end{Mtheorem}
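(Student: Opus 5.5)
The plan is to build the generalized complex structure explicitly from a pure spinor $\rho=\exp(\mathbf i\,\omega)\wedge\Omega$ on $\mathfrak g_A$, in the spirit of \cite{ACK}. Here $\Omega$ is a decomposable complex $k$-form pulled back from the complex quotient $\mathfrak g_c$, and $\omega$ is the closed $2$-form from hypothesis (ii). For left-invariant structures, integrability is equivalent to the existence of a left-invariant $1$-form (or vector plus form) $X+\xi$ with $d\rho=(X+\xi)\cdot\rho$. Non-degeneracy is equivalent to the Chevalley pairing $(\rho,\bar\rho)\neq 0$, i.e.\ to $\Omega\wedge\bar\Omega\wedge\omega^{n-k}\neq0$. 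I would verify purity, non-degeneracy and integrability in that order; the type of the resulting structure is then $\deg\Omega=k$.

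\textbf{Step 1 (the complex part).} Let $\pi\colon\mathfrak g_A\to\mathfrak g_c$ be the projection. Since $\mathfrak h_s$ is an $A$-invariant subspace of the abelian ideal, it is an ideal of $\mathfrak g_A$, so $\pi$ is a Lie algebra morphism and $\pi^*$ commutes with $d$. A complex structure $J_c$ on the $2k$-dimensional algebra $\mathfrak g_c$ gives a basis $\theta_1,\dots,\theta_k$ of $(1,0)$-forms. Integrability of $J_c$ says $d\theta_i\in\mathcal I(\theta_1,\dots,\theta_k)$, hence $d\Omega_c=\beta\wedge\Omega_c$ for a $(1,0)$-form $\beta$, where $\Omega_c=\theta_1\wedge\cdots\wedge\theta_k$. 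Set $\Omega=\pi^*\Omega_c$; then $d\Omega=\pi^*\beta\wedge\Omega$.

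\textbf{Step 2 (non-degeneracy).} We have $\Omega\wedge\bar\Omega=\pi^*(\Omega_c\wedge\bar\Omega_c)$, which is a volume form on $\mathfrak g_c$, i.e.\ a nonzero multiple of the wedge of all covectors vanishing on $\mathfrak h_s$. Therefore $\Omega\wedge\bar\Omega\wedge\omega^{n-k}$ is a nonzero multiple of $\Omega\wedge\bar\Omega\wedge(\omega|_{\mathfrak h_s})^{n-k}=\Omega\wedge\bar\Omega\wedge\sigma^{n-k}$, after choosing the splitting $\mathfrak g_A^*=\mathrm{Ann}(\mathfrak h_s)\oplus\mathfrak h_s^*$. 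This is nonzero because $\sigma$ is non-degenerate. Purity of $\rho$ is automatic from its form, so $\rho$ defines a generalized almost complex structure of type $k$.

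\textbf{Step 3 (integrability), the main obstacle.} We compute
\[
d\rho=\exp(\mathbf i\omega)\wedge\bigl(\mathbf i\,d\omega\wedge\Omega+d\Omega\bigr)=\exp(\mathbf i\omega)\wedge\pi^*\beta\wedge\Omega=\pi^*\beta\cdot\rho ,
\]
using $d\omega=0$. So $d\rho=\xi\cdot\rho$ with $\xi=\pi^*\beta$, and the structure is integrable.

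The delicate point is Step 1. One must make sure the $(1,0)$-forms really live on the quotient and that the wedge of $\Omega$ with arbitrary $\omega$ still gives the pairing used in Step 2. If the closedness of $\omega$ were only available modulo $\mathrm{Ann}(\mathfrak h_s)$, I would instead replace $\omega$ by $\omega+B$ with $B$ chosen in the ideal generated by the $\theta_i$. Such a $B$ does not change $\rho$ up to the annihilation by $\Omega$.
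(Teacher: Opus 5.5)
Your proposal is correct and follows the paper's proof essentially verbatim: the same spinor $\rho=\exp(\mathbf i\omega)\wedge\pi^*\Omega_c$, the same non-degeneracy argument in which only $\sigma^{n-k}$ survives against $\pi^*(\Omega_c\wedge\bar\Omega_c)$, and the same integrability identity $d\rho=\pi^*\beta\wedge\rho$ obtained from $d\omega=0$ and $d\circ\pi^*=\pi^*\circ d_c$. One harmless slip: $\beta$ cannot be a $(1,0)$-form, since that would give $\beta\wedge\Omega_c=0$; it can be taken of type $(0,1)$, and only its existence is used. Your final contingency paragraph is also unnecessary, because hypothesis (ii) already gives $\omega$ closed.
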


Condition (ii) is purely spectral: by Proposition \ref{prop:jordan_equivalence} it holds exactly when
the Jordan blocks of $A_s$ pair up in a way we make explicit, so that Theorem
\ref{Mtheo:Gen_com_struct_alg} becomes an algorithm for producing examples in any dimension, with no
diagonalizability assumption.

\medskip

While Theorem \ref{Mtheo:Gen_com_struct_alg} provides a general constructive method, the complete classification beyond the diagonalizable case remains open\footnote{R.~Arroyo, personal communication: the authors of \cite{Arroyo2025} are working in this direction.}, but the type is constrained by the Jordan structure of $A$, and the constraint is severe: a single Jordan block per eigenvalue forces the
type to be extremal or nearly so.

\begin{Mtheorem}\label{thmD}
Let $G_A$ be an almost abelian Lie group with $\mathrm{spec}(A)\subset\mathbb R$ and every eigenvalue
of geometric multiplicity $1$. Then every left-invariant generalized complex structure on $G_A$ has
type $k\le1$. More generally, for arbitrary Jordan type, if the generalized eigenspace of a real
eigenvalue $\lambda$ decomposes into $s_\lambda$ real Jordan blocks of sizes
$m_1^{(\lambda)}\ge\cdots\ge m_{s_\lambda}^{(\lambda)}$ with $m_\lambda=\sum_i m_i^{(\lambda)}$, then
$$k \leq \min_{\lambda} \left( \sum_{j=1}^{\lfloor s_\lambda/2 \rfloor} m_{2j}^{(\lambda)} +
\left\lfloor \frac{2n - 1 - m_\lambda}{2} \right\rfloor +1 \right),$$
the minimum being taken over the real eigenvalues of $A$.
\end{Mtheorem}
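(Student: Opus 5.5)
The plan is to reduce integrability to a linear-algebraic statement about $A$-invariant subspaces of $\mathfrak h_{\mathbb C}$, and then bound the type block by block over the generalized eigenspaces. Let $L\subset(\mathfrak g_A\oplus\mathfrak g_A^*)_{\mathbb C}$ be the $\mathbf i$-eigenspace of a left-invariant generalized complex structure of type $k$. Write its pure spinor as $\rho=\exp(B+\mathbf i\omega)\wedge\theta_1\wedge\cdots\wedge\theta_k$, and set $\Delta:=\pi_{\mathfrak g}(L)=\mathrm{Ann}(\theta_1,\dots,\theta_k)\subset\mathfrak g_{A,\mathbb C}$.

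\emph{Step 1: structure of $\Delta$.} From the general theory of generalized complex structures I will use that:
\begin{itemize}
\item integrability of $L$ makes $\Delta$ a complex Lie subalgebra of codimension $k$;
\item $L\cap\bar L=0$ forces $\Delta+\bar\Delta=\mathfrak g_{A,\mathbb C}$;
\item consequently $\Delta\cap\bar\Delta$ is the complexification of a real subalgebra of dimension $2n-2k$.
\end{itemize}

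\emph{Step 2: pass to the ideal $\mathfrak h$.} Put $D:=\Delta\cap\mathfrak h_{\mathbb C}$. If $\Delta\subset\mathfrak h_{\mathbb C}$, then $\Delta+\bar\Delta\subset\mathfrak h_{\mathbb C}$, which is impossible. Hence $\Delta$ contains an element $e_0+x$ with $x\in\mathfrak h_{\mathbb C}$. Since $\mathfrak h$ is an abelian ideal, $\mathrm{ad}_{e_0+x}$ acts on $\mathfrak h_{\mathbb C}$ as $A$, so $D$ is $A$-invariant. Moreover:
\begin{itemize}
\item $D$ has codimension $k$ or $k-1$ in $\mathfrak h_{\mathbb C}$, so $\operatorname{codim}_{\mathfrak h_{\mathbb C}}D\ge k-1$;
\item $D+\bar D$ has codimension at most $1$ in $\mathfrak h_{\mathbb C}$.
\end{itemize}
Here I have to be careful that the one-dimensional slack coming from $e_0$ is counted only once; this is the source of the ``$+1$'' in the statement.

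\emph{Step 3: localize at each eigenvalue.} Decompose $\mathfrak h=\bigoplus_\lambda V_\lambda\oplus V_{\mathrm{cx}}$ into real generalized eigenspaces and the part for nonreal eigenvalues. The projections onto these summands are polynomials in $A$, so $D=\bigoplus D_\lambda\oplus D_{\mathrm{cx}}$ with each piece $A$-invariant. For real $\lambda$, conjugation preserves $V_\lambda\otimes\mathbb C$, so $\bar D_\lambda=\overline{D_\lambda}$. Let $c_\lambda$ be the codimension of $D_\lambda$ in $V_{\lambda,\mathbb C}$. The claim to prove is
$$c_\lambda\le\sum_{j=1}^{\lfloor s_\lambda/2\rfloor}m^{(\lambda)}_{2j}$$
(up to the single global slack of Step 2). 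The argument runs as follows:
\begin{itemize}
\item Set $U_\lambda=D_\lambda\cap\bar D_\lambda$. The quotient $V_{\lambda,\mathbb C}/U_\lambda$ contains $(V/D_\lambda)\oplus(V/\bar D_\lambda)$ essentially isomorphically.
\item $V/\bar D_\lambda$ is the complex conjugate of $V/D_\lambda$ as an $(A-\lambda)$-module, so it has the same Jordan type.
\item Hence the quotient module has doubled nilpotent type $(p_1,p_1,p_2,p_2,\dots)$ with $\sum p_i=c_\lambda$.
\item A quotient of a nilpotent module of type $(m_1\ge m_2\ge\cdots)$ has Jordan type dominated entrywise, i.e.\ its $i$-th largest block is at most $m_i$. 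Therefore $p_j\le m^{(\lambda)}_{2j}$, which gives the claimed bound.
\end{itemize}

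\emph{Step 4: assemble the bound.} On the complement $W=\bigoplus_{\lambda'\ne\lambda}V_{\lambda'}\oplus V_{\mathrm{cx}}$, of real dimension $2n-1-m_\lambda$, the condition $D_W+\bar D_W=W_{\mathbb C}$ forces $2\operatorname{codim}D_W\le\dim W$. Hence $\operatorname{codim}D_W\le\lfloor(2n-1-m_\lambda)/2\rfloor$. Adding this to Step 3 and using $k\le\operatorname{codim}D+1$ gives the displayed inequality, and minimizing over $\lambda$ finishes the general statement.

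For the first assertion, with all eigenvalues real and $s_\lambda=1$, I apply Step 3 to every summand instead of treating $W$ crudely. Each $c_\lambda=0$, so $D=\mathfrak h_{\mathbb C}$, and therefore $k\le 1$.

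\emph{Main obstacle.} I expect the hardest step to be the module-theoretic inequality in Step 3. One must verify that $V/U_\lambda$ really realizes the doubled type, and that passing to a quotient of a nilpotent module dominates block sizes entrywise. This is a standard interlacing fact, which I would prove via ranks of powers of $A-\lambda$ using $\dim\ker(A-\lambda)^r$. Handling the exact slack between $\operatorname{codim}D$ and $k$, and between $D+\bar D$ and $\mathfrak h_{\mathbb C}$, is the secondary delicate point.
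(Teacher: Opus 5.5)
Your strategy is the paper's argument in dual form. $D=\Delta\cap\mathfrak h_{\mathbb C}$ is the annihilator of the paper's $\operatorname{span}_{\mathbb C}\{\eta_1,\dots,\eta_k\}\subset\mathfrak h^*_{\mathbb C}$. Your bound $\operatorname{codim}(D+\bar D)\le 1$, obtained from $\Delta+\bar\Delta=\mathfrak g_{A,\mathbb C}$, is the dual of Lemma~\ref{lem:dim-W-conj}. The doubled Jordan type with entrywise domination is the appendix.

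The bookkeeping of the $+1$ is wrong, however, and as written Steps~3--4 fail. Since $\Delta\not\subset\mathfrak h_{\mathbb C}$, one has $\dim D=\dim\Delta-1=2n-k-1$, so $\operatorname{codim}_{\mathfrak h_{\mathbb C}}D=k$ \emph{exactly}. The inequality $k\le\operatorname{codim}D+1$ therefore contains no real slack. The only slack is $\epsilon:=\operatorname{codim}(D+\bar D)\in\{0,1\}$, and your steps assume $\epsilon=0$:
\begin{itemize}
\item Step~4 assumes $D_W+\bar D_W=W_{\mathbb C}$, with $W$ your complement of $V_\lambda$.
\item Step~3 needs $D_\lambda+\bar D_\lambda=V_{\lambda,\mathbb C}$. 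In general the natural map $V_{\lambda,\mathbb C}/U_\lambda\to V/D_\lambda\oplus V/\bar D_\lambda$ is injective with cokernel of dimension $\operatorname{codim}(D_\lambda+\bar D_\lambda)$. So $V/U_\lambda$ sits inside the doubled module, not the other way round.
\end{itemize}
When $\epsilon=1$, the inequality $\operatorname{codim}D_\lambda\le\sum_j m^{(\lambda)}_{2j}$ is false. Take $A=J_{2n-1}(0)$ and the type $1$ structure of Example~\ref{ex:jordan}: there $D=\operatorname{span}\{e_1,\dots,e_{2n-2}\}$ is real of codimension $1$, while $\sum_j m_{2j}=0$. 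The same example refutes your proof of the first assertion, since ``each $c_\lambda=0$, so $D=\mathfrak h_{\mathbb C}$'' would force $k=\operatorname{codim}D=0$. Your hedge ``up to the single global slack'' cannot be cashed in, because you already spend the $+1$ on $k\le\operatorname{codim}D+1$; using both would give $+2$.

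The repair is to bound the right quantity. Set $c(D):=\dim D-\dim(D\cap\bar D)$. A dimension count gives $k=\operatorname{codim}D=c(D)+\operatorname{codim}(D+\bar D)\le c(D)+1$, and this is the true origin of the $+1$. Conjugation preserves each real generalized eigenspace and the nonreal part, so $c(D)=c(D_\lambda)+c(D_W)$.
\begin{itemize}
\item For the complement, $2c(D_W)\le\dim D_W+c(D_W)=\dim(D_W+\bar D_W)\le 2n-1-m_\lambda$.
\item For $D_\lambda$, use the \emph{subquotient} $(D_\lambda+\bar D_\lambda)/(D_\lambda\cap\bar D_\lambda)\cong Z\oplus\bar Z$, where $\dim Z=c(D_\lambda)$. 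Its Jordan type is doubled. Domination for subquotients (your rank argument covers submodules as well as quotients) then yields $c(D_\lambda)\le\sum_{j\le\lfloor s_\lambda/2\rfloor}m^{(\lambda)}_{2j}$.
\end{itemize}
This is exactly Lemmas~\ref{lem:2}--\ref{lem:c-bound} applied to $D$. Indeed, $c$ is unchanged under passing to annihilators, so $c(D_\lambda)$ equals the paper's $c(W_\lambda)$.

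For the first assertion, $s_\lambda=1$ gives $c(D_\lambda)=0$ for every $\lambda$, hence $k\le 1$. Equivalently, as in Proposition~\ref{prop:k-leq-1}, every invariant subspace is then real, so $D=\bar D$ and $k=\operatorname{codim}(D+\bar D)\le 1$.
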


Theorem \ref{thmD} constrains the type without determining it. Four-dimensional solvable Lie groups can carry invariant structures of intermediate type. Theorem 4.7 of \cite{Barberis} instead states that the existence of an invariant generalized complex structure in dimension four is equivalent to the existence of an invariant complex or symplectic structure. Thus, dimension six is the first dimension in which an almost abelian Lie group can admit invariant generalized complex structures while admitting neither invariant complex nor invariant symplectic structures. We study the ten parameterized Jordan block patterns listed in Section \ref{sec:dimension-6}.

\begin{Mtheorem}\label{thmF}
Let $\mathfrak g_A=\mathbb Re_0\ltimes_A\mathbb R^5$ admit neither an invariant complex nor an invariant symplectic structure. The representatives in Table \ref{tab:forms}, under their assigned conditions, define invariant generalized complex structures of the indicated types, and their invariant generators are closed when the corresponding value of $\Lambda$ is zero. There is no invariant generalized complex structure of any type for
\[
A=J_5(\lambda)\quad(\lambda\ne0),\quad
A=J_4(\lambda)\oplus J_1(\mu)\quad(\lambda\ne0),\quad
A=J_3(\lambda)\oplus J_2(\mu)\quad(\lambda\ne\pm\mu).
\]
For diagonal $A$, Proposition \ref{prop:diag6} gives a complete criterion for existence of types $1$ and $2$.
\end{Mtheorem}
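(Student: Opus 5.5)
The statement to prove is Theorem~\ref{thmF}. It has two parts. The first is constructive: verifying the entries of Table~\ref{tab:forms}. The second is the nonexistence for the three Jordan patterns.

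\textbf{Constructive part.} For each representative in Table~\ref{tab:forms}, I exhibit an explicit invariant pure spinor
\[
\rho=e^{B+\mathbf i\omega}\wedge\theta_1\wedge\cdots\wedge\theta_k
\]
in $\bigwedge^\bullet\mathfrak g_A^*\otimes\mathbb C$, with $\d e^0=0$ and $\d e^i=-(A^{T}e)^i\wedge e^0$. Three conditions must then be checked.
\begin{enumerate}
\item \emph{Nondegeneracy:} $\omega^{n-k}\wedge\Omega\wedge\bar\Omega\neq0$.
\item \emph{Integrability:} $\d\rho=(X+\xi)\cdot\rho$ for some $X+\xi\in\mathfrak g_A^{\mathbb C}\oplus\mathfrak g_A^{*\mathbb C}$.
\item \emph{Closedness:} for the Calabi--Yau claim, this element vanishes, and I expect this to reduce to $\Lambda=0$.
\end{enumerate}
Wherever possible I build $\rho$ through Theorem~\ref{Mtheo:Gen_com_struct_alg}. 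One splits $\mathfrak h=\mathfrak h_c\oplus\mathfrak h_s$ with $\dim\mathfrak h_c=2k-1$, so that $\Omega$ comes from a complex structure on the quotient $\mathfrak g_c$. The form $\omega$ is a closed $2$-form, nondegenerate on $\mathfrak h_s$, and exists by Proposition~\ref{prop:jordan_equivalence}. After this, only the closedness computation remains. For diagonal $A$ I simply invoke Proposition~\ref{prop:diag6}, which is the specialization of Theorems~\ref{thmB} and~\ref{thmC} to $n=3$.

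\textbf{Nonexistence part.} Here I use Theorem~\ref{thmD} to bound the type, and the hypothesis to exclude the extremal types. For $J_5(\lambda)$ and $J_4(\lambda)\oplus J_1(\mu)$ with $\mu\neq\lambda$, every eigenvalue has geometric multiplicity $1$, so $k\le1$. The pattern $J_4(\lambda)\oplus J_1(\lambda)$ is handled by the general bound with $s=2$, $m_2=1$, $m_\lambda=5$, which again gives $k\le1+0+1=2$. I will need to sharpen this to $k\le1$ directly. Since type $0$ is symplectic and is excluded by hypothesis, the whole burden is to rule out type $1$ (and type $2$ in the degenerate sub-case).

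\emph{Type~$1$.} Write $\rho=e^{B+\mathbf i\omega}\wedge\theta$ with $\theta$ a complex $1$-form, and set $\theta=a e^0+\eta$ with $\eta\in\mathfrak h^{*\mathbb C}$. Integrability amounts to $\d\theta\in\theta\wedge\mathfrak g^{*\mathbb C}$, i.e.\ $\d\theta\wedge\theta=0$, together with $\d\omega$ and $\d B$ conditions modulo $\theta$. Now $\d\theta=-A^T\eta\wedge e^0$, so $\d\theta\wedge\theta=0$ forces $A^T\eta\wedge\eta=0$ on $\mathfrak h$. Hence $\eta$ is a complex eigenvector of $A^T$ or $\eta=0$, and since $\mathrm{spec}(A)\subset\mathbb R$, $\eta$ is real up to scale. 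The remaining condition says that $\omega$ is symplectic on the $4$-dimensional real quotient $\ker\theta\cap\ker\bar\theta$, with $\d\omega\equiv0$ modulo $\theta,\bar\theta$. I then split into cases.
\begin{itemize}
\item If $\eta=0$, then $\ker\theta\cap\ker\bar\theta=\mathfrak h$, and $\d\omega|_{\mathfrak h}$ involves $e^0$, so the condition becomes a condition on $\omega|_{\mathfrak h}$. Since $\dim\mathfrak h=5$ is odd, this case fails.
\item If $\eta\neq0$ with $a$ having nonzero imaginary part relative to $\eta$, then the kernel is $\ker\eta\cap\ker e^0$, a $4$-dimensional $A$-invariant subspace $W$ of $\mathfrak h$. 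We need a $2$-form $\sigma$ on $W$, nondegenerate, with $A^T\sigma=c\sigma$ for some scalar $c$ (the derivation action $\sigma(A\cdot,\cdot)+\sigma(\cdot,A\cdot)$). For $A|_W=J_4(\lambda)$ this forces eigenvalue pairing of the form $2\lambda=c$ on a single block. The analysis of Proposition~\ref{prop:jordan_equivalence} with a shift shows that an $A$-invariant nondegenerate $\sigma$ requires the blocks to pair off, which a single $J_4$ block cannot do. This is the case I expect to be the main obstacle.
\end{itemize}

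\emph{The pattern $J_3(\lambda)\oplus J_2(\mu)$ with $\lambda\ne\pm\mu$.} The bound from Theorem~\ref{thmD} gives $k\le2$ at best, so types $1$ and $2$ must be handled directly. For type $1$, $W$ is a $4$-dimensional invariant subspace with $A|_W\cong J_2(\lambda)\oplus J_2(\mu)$ or $J_3\oplus J_1$. Pairing requires matching block sizes with eigenvalues summing to $c$. This is possible for $J_2(\lambda)\oplus J_2(\mu)$, so additional constraints from the $\d\omega$ and $\d B$ conditions involving $e^0$ terms must be invoked; I expect them to force $\lambda=\pm\mu$. For type $2$, the argument is dual: $\Omega$ spans a $2$-dimensional complex $A^T$-invariant space, the quotient is $2$-dimensional, and the integrability of $\Omega$ forces a complex structure pattern on a $4$-dimensional (in $\mathfrak h^*$, $3$ plus $e^0$) piece. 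That pattern requires $\mathfrak h_c$ with $\dim 3$ of the form $\mathrm{diag}$-like $J_1(\nu)\oplus$ a pair of equal eigenvalues, which again is impossible when the blocks are $J_3(\lambda)$ and $J_2(\mu)$ with $\lambda\neq\pm\mu$.
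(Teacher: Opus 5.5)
\textbf{Verdict: genuine gap.} The constructive part is workable as a direct check, but the nonexistence argument rests on a misstated integrability condition and leaves several cases open.

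\textbf{The type-$1$ reduction.} Take $\Omega=ae^0+\eta$ with $\eta$ real. Condition (c) of Lemma \ref{lem:integ_equiv} reads $\d\beta\wedge\Omega=-e^0\wedge\Psi(\beta_{ab})\wedge\eta=0$, hence $\Psi(\omega_{ab})\wedge\eta=0$. So $\sigma=\omega_{ab}|_{\ker\eta}$ satisfies $\sigma(Au,v)+\sigma(u,Av)=0$ on the $A$-invariant space $\ker\eta\subset\mathfrak h$, with no scalar.

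Your condition ``$\d\omega\equiv0$ modulo $\theta,\bar\theta$'' is vacuous. Every invariant exact form is divisible by $e^0$, and $e^0\in\operatorname{span}\{\theta,\bar\theta\}$ once $a\notin\mathbb R$.

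Your relaxation $\Psi\sigma=c\sigma$ is fatal. With $c=2\lambda$ the block $J_4(\lambda)$ shifts to $J_4(0)$, which does preserve a symplectic form, since even nilpotent blocks are unrestricted in Proposition \ref{prop:jordan_equivalence}. Likewise $J_2(\lambda)\oplus J_2(\mu)$ becomes admissible with $c=\lambda+\mu$. This is why you could not close that case and fell back on ``I expect''. Also, ``a single $J_4$ block cannot pair off'' is false for $\lambda=0$; the real obstruction is that $\lambda\ne0$ has no partner $J_4(-\lambda)$.

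With $c=0$ your route does work, and it is a tidy alternative to the coefficient systems in Proposition \ref{prop:nonexistence}. Non-degeneracy makes $\sigma$ symplectic on $\ker\eta$. Then $A|_{\ker\eta}$ is $J_4(\lambda)$, $J_3(\lambda)\oplus J_1(\mu)$ or $J_2(\lambda)\oplus J_2(\mu)$, and none of these passes the pairing criterion under the stated hypotheses.

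\textbf{Cases still missing after that repair.}
\begin{itemize}
\item For $J_4(\lambda)\oplus J_1(\lambda)$ the $\lambda$-eigenspace of $A^T$ is $\operatorname{span}_{\mathbb C}\{e^4,e^5\}$. So $\eta=e^4+ce^5$ with $c\notin\mathbb R$ is allowed, and it is not real up to scale. The paper wedges $\d\beta\wedge\Omega=0$ with $e^5$ to get $\d\omega\wedge e^{45}=0$. This forces $\omega_{12}=\omega_{13}=\omega_{23}=0$ because $\lambda\ne0$, contradicting non-degeneracy.
\item Type $2$ for the same pattern is only announced in your proposal. The one real plane is $\ker(A^T-\lambda)$; otherwise $W=\operatorname{span}_{\mathbb C}\{e^4,e^3+ce^5\}$. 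Non-degeneracy then forces $\operatorname{Im}c\ne0$ and $\omega_{12}\ne0$. Integrability gives $e^{345}\wedge\d\omega=2\lambda\omega_{12}e^{012345}=0$, a contradiction.
\item For $J_3(\lambda)\oplus J_2(\mu)$ you misapply Theorem \ref{thmD}. Since $\lambda\ne\mu$, both eigenvalues have geometric multiplicity one, so Proposition \ref{prop:k-leq-1} already gives $k\le1$: every invariant plane is real, against Lemma \ref{lem:dim-W-conj}. Your type-$2$ argument via a ``complex structure pattern'' on a quotient would need a converse of Theorem \ref{Mtheo:Gen_com_struct_alg}, which is not available.
\end{itemize}

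\textbf{Constructive part.} The table's representatives cannot be produced by Theorem \ref{Mtheo:Gen_com_struct_alg}, because several tabulated $\omega$ are not closed. For example, $\d\omega=-e^{013}$ in case 4, type $2$, and $\d\omega=-2ae^{012}$ with $a\ne0$ in case 7, type $1$. Your fallback of direct verification is the right method, and it is the paper's. The condition to check is $\d\omega\wedge\Omega=0$, not $\d\omega=0$. Together with $\d\Omega=-\Lambda e^0\wedge\Omega$ this gives $\d\rho=-\Lambda e^0\wedge\rho$, and hence closedness when $\Lambda=0$.
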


Three points concerning these results deserve remark. First, it is genuinely finer than what the
Jordan data alone predict: the bound of Theorem \ref{thmD} permits type $1$ for $A=J_5(\lambda)$, yet
no such structure exists unless $\lambda=0$, and the three configurations above carry no generalized
complex structure whatsoever, extremal or not. Second, the obstruction in each case is the interaction
between integrability and non-degeneracy rather than either alone: integrability forces a linear system
on the coefficients of $\omega$ whose only solution kills the coefficients that non-degeneracy requires
to be nonzero. Third, asking an invariant generator to be closed imposes an additional trace condition. Under the exclusion of extremal structures, the
diagonal case admits no closed invariant generator. Among the representatives verified below, a closed invariant generator occurs in the $J_2+1+1+1$ family described in Table \ref{tab:forms}.

\medskip

In this paper we do not address generalized K\"ahler structures; for almost abelian Lie groups this problem has already been studied by Fino and Paradiso~\cite{Fino2021}. However, the methods developed here could also be applied to this setting and may provide some new insights. From Gualtieri's work~\cite{Gualtieri2011} it is known that a generalized K\"ahler structure is equivalent to the existence of a bihermitian pair $(g,J_+)$, $(g,J_-)$ of SKT structures whose Bismut torsion $3$-forms are opposite. This reformulation translates the problem of studying generalized K\"ahler structures into that of understanding Hermitian connections with totally skew-symmetric torsion, a class of structures extensively studied in~\cite{Ivanov2013, Alexandrov2001, Friedrich2002}.

\medskip

  \vspace{1em }

    \subsection*{Acknowledgments}
    M. Andrade~P. is supported by FAPESP grant no. 2025/14730-8.

    L.~Camponês do Brasil M. is supported by FAPESP grants no. 2025/23958-2, 2026/09294-7. 
        
    L.~F.~Cavenaghi is supported by the Simons Foundation, grant SFI-MPS-T-Institutes-00007697, and the Ministry of Education and Science of the Republic of Bulgaria, grant DO1-239/10.12.2024.
    
    P.~A.~M.~Martins is supported by FAPESP grants no. 2024/07684-7, 2025/22312-1. 

   The authors are grateful to Lino Grama for suggesting the project and for his guidance and continued collaboration throughout its development.

    The authors thank Giovane Galindo, who kindly helped with fruitful insights during the conception of this manuscript.

The authors thank Beatrice Brienza, whose comments and questions led to Theorem \ref{Mtheo:Gen_com_struct_alg}.

    This work was conceived while P.~A.~M.~Martins, L.~Camponês do Brasil M., and  M. Andrade~P. were visiting the ICMS-Sofia under L.~Katzarkov's research group ``Theory of Atoms'', for which they thank the hospitality and leadership.
    \vspace{1em}
\bigskip


\section{Generalized complex structures on almost abelian solvmanifolds}
\label{sec:gcs-introduction-and-everyting}

In this section, we recall the basics of generalized complex structures and specialize to left-invariant generalized complex structures on almost abelian solvmanifolds.

\subsection{Generalized complex structures: a quick account} 
\label{sec:generatiliesoncgs}
Let $M$ be a smooth manifold of real dimension $2n$. Throughout, we let $T_M$ denote the tangent
bundle of $M$ and $T_M^\ast$ its cotangent bundle. We adopt the letters $X, Y$ for vector fields on
$M$ (smooth sections of $T_M$) and $\xi, \eta$ for $1$-forms on $M$ (smooth sections of $T_M^\ast$).
Notice that we can endow $T_M\oplus T_M^\ast$ with the split-signature symmetric pairing
    \begin{equation}\label{eq:Clifford-pairing}
    \langle X + \xi, Y + \eta \rangle := \frac{1}{2} \big( \xi(Y) + \eta(X) \big)
    \end{equation}
on smooth sections $X+ \xi,\, Y + \eta \in C^{\infty}(T_M \oplus T^{*}_M)$. We start by recalling the
concept of \emph{generalized complex structure}.

\medskip

First, recall that the bundle $\bigwedge^\bullet T^*_M$ carries the structure of a \emph{Clifford module}
over $T_M \oplus T^{*}_M$ \cite{Hitchin2003}*{Section 3.2}, where sections act by
$$(X+\xi) \cdot \rho = \iota_X \rho + \xi \wedge \rho$$
for $X+\xi \in C^{\infty}(T_M \oplus T^{*}_M)$ and $\rho \in C^{\infty}(\bigwedge^\bullet T^*_M)$. Here,
of course, the Clifford bundle $\mathrm{Cl}\left(T_M\oplus T_M^\ast,\langle\cdot,\cdot\rangle\right)$
is taken with respect to the quadratic form induced by $\langle\cdot,\cdot\rangle$ defined in Equation
\eqref{eq:Clifford-pairing}; with these conventions one has
$(X+\xi)\cdot(X+\xi)\cdot\rho = \langle X+\xi, X+\xi\rangle\, \rho$. Both the pairing and the Clifford
action are extended $\mathbb C$-bilinearly to complexifications whenever needed.

Second, consider the \emph{Courant bracket} on smooth sections of $T_M \oplus T^{*}_M$
\cite{Hitchin2003}*{Section 2}:
\begin{equation}\label{eq:Courant-Bracket}
[[X+\xi, Y+\eta]] = [X,Y] + \mathcal{L}_X \eta - \mathcal{L}_Y \xi
- \frac{1}{2}\d (\iota_X \eta - \iota_Y \xi).
\end{equation}

 \begin{defin}\label{def:gce}
     A \emph{generalized complex structure} on $M$ is a smooth section $\mathcal J$ of the endomorphism
     bundle $\mathrm{End}(T_M\oplus T_M^\ast)$ such that
     \begin{itemize}
         \item[(a)] $\mathcal J^2=-\mathrm{Id}$, thus
         $\left(T_M\oplus T_M^\ast\right)\otimes_{\mathbb R}\mathbb C=L\oplus \bar L$, where
         $L:=\ker\left(\mathcal J-\mathbf{i}\,\right)$;
         \item[(b)] $[[C^\infty(L),C^\infty(L)]]\subset C^\infty(L)$, after the obvious extension of the
         Courant bracket to the complexification
         $\left(T_M\oplus T_M^\ast\right)\otimes_{\mathbb R}\mathbb C$;
         \item[(c)] $\langle \mathcal J\cdot,\mathcal J\cdot\rangle=\langle\cdot,\cdot\rangle$.
         \label{item:orthogonality}
     \end{itemize}
 \end{defin}

Definition \ref{def:gce}\text{(a)} implies that $L \cap \overline{L} = \{0\}$, while Definitions
\ref{def:gce}\text{(a)} and \ref{def:gce}\text{(c)} together imply that $L$ is isotropic: for
$u,v \in L$ one has $\langle u,v\rangle = \langle \mathcal Ju,\mathcal Jv\rangle = -\langle u,v\rangle$.
Since $\operatorname{rk}_{\mathbb C}L = 2n$ and $\langle\cdot,\cdot\rangle$ has signature $(2n,2n)$, the
sub-bundle $L$ is in fact \emph{maximal} isotropic. As such, $L$ is the \emph{Clifford annihilator}
$$L = \{v \in (T_M \oplus T^{*}_M) \otimes_{\mathbb R} \mathbb{C} \mid v \cdot K_L = 0 \}$$
of a unique line sub-bundle $K_L \subset \bigwedge^\bullet T^*_M \otimes_{\mathbb R} \mathbb{C}$. A line
whose annihilator is maximal isotropic is, by definition, a \emph{pure} spinor line, and we call $K_L$
\emph{the canonical line bundle of $\mathcal J$}. We now move towards giving a local description of
$K_L$.

\medskip

Being a line bundle, $K_L$ admits, around each point of $M$, a smooth local generator
$\rho \in C^{\infty}(U, \bigwedge^\bullet T^*_M \otimes_{\mathbb R} \mathbb{C})$. Following
\cites{Gualtieri2011, Cavalcanti2004}, purity of $\rho$ means that at each point $x \in U$ one can write
  $$\rho_x = \exp(B+\mathbf{i}\omega)\wedge\Omega,$$
where $B,\omega \in \bigwedge^2 T^*_xM$ and $\Omega$ is \emph{decomposable}, i.e.,
$\Omega = \theta_1 \wedge \cdots \wedge \theta_k$ with $\theta_1, \dots, \theta_k \in T^*_xM \otimes
\mathbb{C}$ linearly independent. At type zero, $\Omega$ is a nonzero complex scalar, which is $1$ for a normalized generator. We remark that $k$
actually depends on the point of $M$ and need not be constant everywhere; accordingly, $B$, $\omega$ and
$\Omega$ can be chosen to depend smoothly on $x$ only where $k$ is locally constant. We will come back to
this in a moment. For now, we observe that the constraint $L \cap \overline{L} = \{0\}$ is equivalent to
the requirement that the pure form $\rho$ be \emph{non-degenerate}:
  $$\omega^{n-k} \wedge \Omega \wedge \overline{\Omega} \neq 0.$$
Note that such a condition does not involve $B$ and forces $k \in \{0,\ldots, n\}$. Following
\cite{Gualtieri2011}, \emph{integrability} of $\mathcal{J}$ (Definition \ref{def:gce}\text{(b)}) is
equivalent to requiring that for any local generator $\rho$ of $K_L$ there exists a local section
$X + \xi \in C^\infty((T_M \oplus T^{*}_M) \otimes_{\mathbb R} \mathbb{C})$ such that
$$\d\rho = (X + \xi) \cdot \rho.$$
Thus, we see that understanding generalized complex structures passes through understanding the equations
constraining such local generators. From now on, we say that $\rho$ is a \emph{pure spinor}.

\begin{defin}\label{def:agce}
    An \emph{almost generalized complex structure} on $M$ is a smooth section $\mathcal J$ of
    $\mathrm{End}(T_M\oplus T_M^\ast)$ satisfying items \text{(a)} and \text{(c)} of
    Definition \ref{def:gce}. A generalized complex structure is thus an integrable almost
    generalized complex structure.
\end{defin}

\begin{lemma}\label{lem:spinor-determines-J}
    Let $K\subset \bigwedge^\bullet T^*_M\otimes_{\mathbb R}\mathbb C$ be a line sub-bundle which is
    pure and non-degenerate at every point, and set $L:=\{v \in \left(T_M\oplus T_M^\ast\right)\otimes_{\mathbb R}\mathbb C \mid v\cdot K=0\}$. Then
    $$\left(T_M\oplus T_M^\ast\right)\otimes_{\mathbb R}\mathbb C=L\oplus\bar L.$$
    Let
    $$\mathrm{pr}_L,\ \mathrm{pr}_{\bar L}:
    \left(T_M\oplus T_M^\ast\right)\otimes_{\mathbb R}\mathbb C\longrightarrow
    \left(T_M\oplus T_M^\ast\right)\otimes_{\mathbb R}\mathbb C$$
    denote the projections onto $L$ and onto $\bar L$ associated with this splitting, so that
    $v=\mathrm{pr}_L(v)+\mathrm{pr}_{\bar L}(v)$ for every
    $v\in C^\infty\big((T_M\oplus T_M^\ast)\otimes_{\mathbb R}\mathbb C\big)$. With this notation,
    $$\mathcal J:=\mathbf i\,\mathrm{pr}_L-\mathbf i\,\mathrm{pr}_{\bar L}$$
    is the unique almost generalized complex structure on $M$ with $K_L=K$. Conversely, an almost generalized complex structure determines $K_L$, and hence determines any local generator $\rho$ of $K_L$ up to multiplication by a nowhere-vanishing smooth complex function.
\end{lemma}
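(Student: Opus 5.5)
The argument is pointwise linear algebra, followed by a smoothness check. At each $x\in M$, $K_x$ is a pure line, so its annihilator $L_x$ is maximal isotropic of complex rank $2n$. The first step is to show that non-degeneracy, $\omega^{n-k}\wedge\Omega\wedge\overline\Omega\neq0$, is equivalent to $L_x\cap\bar L_x=\{0\}$. For this I will use the Chevalley/Mukai pairing $(\cdot,\cdot)$ on spinors and the standard fact from \cite{Gualtieri2011}*{Prop.~2.38}: for pure spinors $\rho,\tau$ with annihilators $L,L'$ one has $(\rho,\tau)\neq0$ if and only if $L\cap L'=\{0\}$. Applying this with $\tau=\bar\rho$ and $\rho=e^{B+\mathbf i\omega}\wedge\Omega$, one computes $(\rho,\bar\rho)$ as a nonzero multiple of the top-degree component $(\omega^{n-k}\wedge\Omega\wedge\overline\Omega)_{\mathrm{top}}$; the $B$-terms cancel, since $e^{B}$ acts by an isometry of the Mukai pairing. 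Hence $L_x\cap\bar L_x=0$. Since $\operatorname{rk}L_x+\operatorname{rk}\bar L_x=4n$, this gives $(T\oplus T^*)_x\otimes\mathbb C=L_x\oplus\bar L_x$. The main obstacle is this step, which rests on Chevalley's theorem that the pairing detects transversality; I will quote it rather than reprove it.

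Next, define $\mathcal J=\mathbf i\,\mathrm{pr}_L-\mathbf i\,\mathrm{pr}_{\bar L}$. Because the splitting is conjugation-invariant, $\overline{\mathcal J v}=\mathcal J\bar v$, so $\mathcal J$ preserves real elements and is a real endomorphism. It clearly satisfies $\mathcal J^2=-\mathrm{Id}$, which is condition (a). For orthogonality (c), write $u=u_1+\bar u_2$ and $v=v_1+\bar v_2$ with $u_i,v_i$ in $L$. Since $L$ and $\bar L$ are isotropic, only cross terms survive in $\langle u,v\rangle$. Each cross term picks up a factor $(\mathbf i)(-\mathbf i)=1$ under $\mathcal J$, so $\langle\mathcal Ju,\mathcal Jv\rangle=\langle u,v\rangle$.

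For smoothness, a local smooth generator $\rho$ of $K$ determines $L$ as the kernel of the bundle map $v\mapsto v\cdot\rho$. This map has constant rank $2n$, so $L$ is a smooth subbundle. The projections, and hence $\mathcal J$, are therefore smooth, and by construction $\ker(\mathcal J-\mathbf i)=L$, so $K_L=K$.

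For uniqueness, any almost generalized complex structure $\mathcal J'$ with $K_{L'}=K$ has $L'=\operatorname{Ann}(K)=L$. An endomorphism with $\mathcal J'^2=-\mathrm{Id}$ is determined by its $\mathbf i$-eigenbundle, since its $-\mathbf i$-eigenbundle is the conjugate $\bar L$, and so $\mathcal J'=\mathcal J$. Conversely, $\mathcal J$ determines $L$, and $L$ determines the pure line $K_L$ uniquely, as recalled above. Two local nowhere-vanishing generators of the same line bundle then differ by a nowhere-vanishing smooth complex function, namely their ratio, which is smooth by local triviality.
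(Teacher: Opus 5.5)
Your proof is correct and follows the paper's argument: the splitting comes from $L\cap\bar L=\{0\}$ plus the rank count, and $\mathcal J$ is real with $\mathcal J^2=-\mathrm{Id}$ by construction. Orthogonality comes from the isotropy of $L$ and $\bar L$, and uniqueness from $L$ being the $\mathbf i$-eigenbundle. The only differences are additions: you justify the equivalence of non-degeneracy with $L\cap\bar L=\{0\}$ via the Mukai pairing and Chevalley's criterion, which the paper simply takes as recalled, and you verify smoothness of $L$ and $\mathcal J$ through the constant-rank kernel of $v\mapsto v\cdot\rho$, which the paper leaves implicit.
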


\begin{proof}
    Non-degeneracy of $K$ means $L\cap\bar L=\{0\}$, which together with
    $\operatorname{rk}_{\mathbb C}L=2n$ gives the splitting. The endomorphism $\mathcal J$ is real
    because $\overline{\mathrm{pr}_L}=\mathrm{pr}_{\bar L}$, and $\mathcal J^2=-\mathrm{Id}$ by
    construction. Finally, write $u=u_L+u_{\bar L}$ and $v=v_L+v_{\bar L}$. Using that both
    $L$ and $\bar L$ are isotropic,
    $$\langle\mathcal Ju,\mathcal Jv\rangle=\langle u_L,v_{\bar L}\rangle+\langle u_{\bar L},v_L\rangle
    =\langle u,v\rangle,$$
    so Definition \ref{def:gce}\text{(c)} holds. Uniqueness follows because $L$ is the $\mathbf i$-eigenbundle of $\mathcal J$.
\end{proof}
\begin{lemma} \label{lem:integ_equiv}
    Let $U\subseteq M$ be an open set and let $\rho=\exp(B+\mathbf i\omega) \wedge \Omega$ be a
    non-degenerate pure spinor on $U$ where $B,\omega \in C^\infty(U,\bigwedge^2 T^*_M)$ and
    $\Omega = \theta_1 \wedge \cdots \wedge \theta_k$. Assume that 
    $\theta_1,\dots,\theta_k\in C^\infty(U,T^*_M\otimes\mathbb C)$ are pointwise linearly independent. Write $\beta:=B+\mathbf i\omega$.
    Then the almost generalized complex structure determined by $\rho$
    (Lemma \ref{lem:spinor-determines-J}) is integrable on $U$ if and only if there exists a section
    $X+\xi \in C^\infty(U,(T_M \oplus T_M^*)\otimes_{\mathbb R}\mathbb{C})$ such that
    \begin{itemize}
        \item[(a)] $\iota_X\Omega = 0$,
        \item[(b)] $\d\Omega = \alpha\wedge\Omega$, where $\alpha:=\xi + \iota_X \beta$,
        \item[(c)] $\d\beta \wedge \Omega = 0$.
    \end{itemize}
\end{lemma}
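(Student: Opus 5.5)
The plan is to start from the integrability criterion recalled above: the structure is integrable on $U$ if and only if $\d\rho=(X+\xi)\cdot\rho$ for some local section $X+\xi$. This holds for any local generator once it holds for one, because replacing $\rho$ by $f\rho$ changes $\d\rho$ by $\d f\wedge\rho=(\d\log f)\cdot(f\rho)$. So everything reduces to comparing both sides for $\rho=e^{\beta}\wedge\Omega$.

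The first step is to compute each side. Since $\beta$ is a $2$-form, $e^\beta$ is even and
$$\d\rho=e^\beta\wedge(\d\beta\wedge\Omega+\d\Omega).$$
On the other side, $\iota_X e^\beta=\iota_X\beta\wedge e^\beta$, so
$$(X+\xi)\cdot\rho=e^\beta\wedge\bigl(\iota_X\Omega+(\xi+\iota_X\beta)\wedge\Omega\bigr)=e^\beta\wedge(\iota_X\Omega+\alpha\wedge\Omega).$$
Because wedging with $e^{\beta}$ is invertible, with inverse $e^{-\beta}$, integrability is equivalent to the existence of $X,\xi$ with
$$\d\beta\wedge\Omega+\d\Omega=\iota_X\Omega+\alpha\wedge\Omega.$$

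The second step is to split this equation by form degree, using that $\Omega$ has pure degree $k$. In degree $k-1$ it reads $\iota_X\Omega=0$, which is (a). In degree $k+1$ it reads $\d\Omega=\alpha\wedge\Omega$, which is (b). In degree $k+2$ it reads $\d\beta\wedge\Omega=0$, which is (c). No other degrees occur, so the converse direction is immediate: given $X+\xi$ satisfying (a)–(c), summing the three identities gives the displayed equation, hence $\d\rho=(X+\xi)\cdot\rho$.

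The main point to watch is the case $k=0$. There $\Omega$ is a function, taken constant $1$ after normalization, and (a) is vacuous. If $\Omega$ is a nonconstant nonvanishing function $f$, the degree-$1$ part of the equation is $\d f=\alpha f$, which is still (b) and is solvable with $\alpha=\d\log f$. One can always choose $\xi=\alpha-\iota_X\beta$ for the required $\alpha$, so $\xi$ imposes no constraint beyond (b).

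Smoothness of $X+\xi$ deserves a remark. In the forward direction $X+\xi$ is given as a smooth section by the criterion. In the converse direction it is given by hypothesis. So no pointwise solution ever has to be upgraded to a smooth section.
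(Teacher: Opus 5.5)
Your proposal is correct and follows the paper's proof essentially verbatim. Both factor $\d\rho$ and $(X+\xi)\cdot\rho$ as $\exp(\beta)\wedge(\cdot)$, use the invertibility of wedging with $\exp(\beta)$, and compare homogeneous components. One small slip: $\d\beta\wedge\Omega$ has degree $k+3$, not $k+2$, since $\d\beta$ is a $3$-form; this is harmless because the degrees $k-1$, $k+1$, $k+3$ are still pairwise distinct.
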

\begin{proof}
    Since $\beta$ has even degree, $\d (\exp{\beta})=\d\beta\wedge \exp{(\beta)}$ and
    $\iota_X \exp{(\beta)}=(\iota_X\beta)\wedge \exp{(\beta)}$. Moreover, both $\d\beta$ and $\iota_X\beta$
    commute with $\exp{(\beta)}$. Hence,
    $$\d\rho = \exp{(\beta)}\wedge\big(\d\Omega + \d\beta\wedge\Omega\big),\qquad
      (X+\xi)\cdot\rho = \exp{(\beta)}\wedge\big(\iota_X\Omega + \alpha\wedge\Omega\big).$$
The operator $\exp{(\beta)}\wedge(\cdot)$ is invertible with inverse $\exp({-\beta})\wedge(\cdot)$. The
    integrability condition $\d\rho=(X+\xi)\cdot\rho$ is equivalent to
    $$\d\Omega + \d\beta \wedge \Omega = \iota_X\Omega + \alpha\wedge \Omega.$$
    Because $\Omega$ is homogeneous of degree $k$ on $U$, the four terms above are homogeneous of
    degrees $k+1$, $k+3$, $k-1$ and $k+1$, respectively. Comparing the components of degree $k-1$,
    $k+1$ and $k+3$ yields \textit{(a)}, \textit{(b)}, and \textit{(c)}.
\end{proof}

\medskip

Having fixed a generalized complex structure, consider the image of the $\mathbf i$-eigenbundle $L$ under the natural projection $$\pi_T \colon (T_M \oplus T^*_M) \otimes_{\mathbb R} \mathbb{C} \longrightarrow T_M \otimes_{\mathbb R} \mathbb{C}.$$ Let $U\subseteq M$ be an open set on which the pure spinor $\rho=\exp\beta\wedge\Omega$  is such that $\Omega=\theta_1\wedge\ldots\wedge\theta_k$. A covector $\xi$ lies in $L$ when
$\xi\wedge \exp\beta\wedge\Omega=0$, equivalently $\xi\wedge\Omega=0$. Thus $\ker(\pi_T|_L)=\operatorname{span}_{\mathbb C}\{\theta_1,\ldots,\theta_k\}$ has rank $k$. Thus, $E:=\pi_T(L)$ is a complex distribution of rank $2n-k$ on $U$. Moreover, as $\pi_T$ is surjective and
$\left(T_M \oplus T^*_M\right) \otimes_{\mathbb R} \mathbb{C} = L \oplus \overline{L}$, we obtain
$$E + \overline{E} = T_M \otimes_{\mathbb R} \mathbb{C},\qquad
\dim_{\mathbb C}\left(E\cap\overline E\right) = 2(n-k).$$

\begin{defin}
    We term the number $k$ above \emph{the local type of $\mathcal J$}, and we say that it is
    \emph{extremal} if $k\in\{0,n\}$.
\end{defin}

In Example \ref{ex:extremal-cases} below, we show how extremal local types are related to complex and symplectic structures.

\begin{example}\label{ex:extremal-cases}
Assume that $(M,\omega)$ is a symplectic manifold. We can think of
$\omega \in C^\infty(\bigwedge^2 T^*_M)$ as the induced isomorphism $\omega\colon T_M \to T^*_M$,
$X\mapsto \iota_X\omega$. Then
\[\mathcal{J}_\omega := \begin{pmatrix} 0 & -\omega^{-1} \\ \omega & 0 \end{pmatrix}\]
can be checked to define a generalized complex structure such that
$$ L_\omega = \{X - {\mathbf i}\,\omega(X) \mid X \in C^\infty(T_M \otimes_{\mathbb R} \mathbb{C})\},$$ whose integrability amounts to $\d\omega=0$. Notice that $\pi_T|_{L_\omega}$ is an
isomorphism onto $T_M \otimes_{\mathbb R}\mathbb{C}$, so that $E$ has rank $2n$, yielding a
generalized complex structure of local type $0$. Equivalently, $K_{L_\omega}$ is globally generated
by the pure spinor
$$ \rho = \exp(\mathbf{i}\omega), $$
whose lowest-degree component is the constant function $1$, that is, $\Omega=1$.

Likewise, assume that $(M, J)$ is a complex manifold. Then
$$ \mathcal{J}_J := \begin{pmatrix} -J & 0 \\ 0 & J^* \end{pmatrix}$$
can be checked to define a generalized complex structure on $M$ such that
$$ L_J = T_{0,1} \oplus T^*_{1,0}.$$
Here, integrability corresponds precisely to the integrability of $J$. Its projection is
$\pi_T(L_J)=T_{0,1}$, of rank $n=2n-k$, yielding a generalized complex structure of local type $n$.
Equivalently, $K_{L_J}$ is the canonical bundle $\bigwedge^n T^*_{1,0}$ of $(M,J)$, hence generated locally by a holomorphic volume form
$$ \rho = \Omega = \d z^1\wedge\cdots\wedge \d z^n, $$
which is decomposable of degree $k=n$.
\end{example}

The concept of generalized complex structure first appeared in the literature in \cite{Hitchin2003},
where a generalized notion of ``Calabi--Yau structure'' is defined. As we will see straight from the
definition, any symplectic manifold carries the structure to be defined next:

\begin{defin}
A \emph{generalized Calabi--Yau structure} on $M$ is a generalized complex structure whose canonical line has a global nowhere-vanishing closed generator. An \emph{invariant generalized Calabi--Yau structure} on a Lie group is one whose canonical line has a closed left-invariant generator. On $\Gamma\backslash G$, invariant forms mean forms
descending from left-invariant forms on $G$. This is stronger than requiring only that the generalized complex structure be invariant.
\end{defin}

\medskip

We have now collected the minimum background on the general theory of generalized complex structures
needed in this paper. We move towards recalling some concepts and fixing notation and terminology
related to almost abelian solvmanifolds. Then, in Subsection \ref{sec:gce-aasm}, we revisit part of the preceding
discussion in that setting.

\medskip

\subsection{Almost abelian solvmanifolds}

\subsubsection{Almost abelian Lie groups}

Let $G$ be a Lie group. We say that $G$ is \emph{almost abelian} if its Lie algebra $\mathfrak{g}$ is
almost abelian, that is, if $\mathfrak g$ admits an abelian ideal $\mathfrak{h}$ of codimension one.
Set $d:=\dim\mathfrak h=\dim\mathfrak g-1$. Choosing an element $e_0 \in\mathfrak g\setminus\mathfrak{h}$,
the Lie algebra $\mathfrak{g}$ decomposes as
$$\mathfrak{g}=\mathbb{R}e_0 \ltimes \mathfrak{h},$$
and, since $\mathfrak h$ is an abelian ideal, the whole bracket of $\mathfrak g$ is recorded in
$\ad_{e_0}|_{\mathfrak{h}}\in \mathrm{End}(\mathfrak h)$. Fixing a basis for $\mathfrak{h}$, this
endomorphism is represented by a real matrix $A$, so that $\mathfrak{g}$ is isomorphic to
$$\mathfrak{g}_A:= \mathbb{R}e_0 \ltimes_A \mathfrak{h},$$
whose Lie bracket $[\cdot,\cdot]$ is determined by $\ad_{e_0}(v)=[e_0,v]=Av$ for $v \in \mathbb{R}^d$,
where we identify $\mathfrak h$ with $\mathbb R^d$.

Notice that $A$ depends on the choices made. Indeed, every other element of
$\mathfrak g\setminus\mathfrak h$ is of the form $\lambda e_0+h$, with $\lambda\in\mathbb R^\ast$ and
$h\in\mathfrak h$. Replacing $e_0$ with it changes $A$ with $\lambda A$, since $\mathfrak h$ is
abelian. Likewise, changing the basis of $\mathfrak h$ replaces $A$ with $PAP^{-1}$, for some
$P\in \mathrm{GL}_d(\mathbb R)$. Moreover, the ideal $\mathfrak h$ itself need not be unique. Conversely, matrices related as above always produce isomorphic
Lie algebras and, interestingly enough, nothing else does:

\begin{prop}{\cite{Freibert2012}*{Proposition 2.9}}\label{Conjugate}
Two almost abelian Lie algebras $\mathfrak{g}_A$ and $\mathfrak{g}_B$, determined by
$A,B \in M(d, \mathbb{R})$ respectively, are isomorphic if and only if $A$ and $cB$ are conjugate,
for some $c \neq 0$.
\end{prop}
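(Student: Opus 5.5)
We prove the two directions separately.

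The easy direction is that conjugacy up to scaling gives an isomorphism. Suppose $A = cPBP^{-1}$ with $c\neq 0$ and $P\in\mathrm{GL}_d(\mathbb R)$. Write $\mathfrak g_B=\mathbb R f_0\ltimes_B\mathbb R^d$ and $\mathfrak g_A=\mathbb Re_0\ltimes_A\mathbb R^d$. Define a linear map $\phi\colon\mathfrak g_B\to\mathfrak g_A$ by $\phi(f_0)=c^{-1}e_0$ and $\phi(v)=Pv$ for $v\in\mathbb R^d$. Since both ideals are abelian, we only need to check brackets of the form $[f_0,v]$:
\[
[\phi f_0,\phi v]=c^{-1}APv=c^{-1}cPBP^{-1}Pv=PBv=\phi([f_0,v]).
\]
This is exactly the observation made before the proposition about rescaling $e_0$ and changing the basis of $\mathfrak h$.

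The converse is where the real work lies. Let $\phi\colon\mathfrak g_B\to\mathfrak g_A$ be an isomorphism. The difficulty is that $\phi$ need not carry the abelian ideal $\mathfrak h_B$ onto $\mathfrak h_A$.

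Suppose first that $\phi(\mathfrak h_B)=\mathfrak h_A$. Write $\phi(f_0)=\lambda e_0+h$ with $\lambda\neq0$ and $h\in\mathfrak h_A$, and set $P=\phi|_{\mathfrak h_B}$. For $v\in\mathfrak h_B$ we have $\phi(Bv)=[\phi f_0,\phi v]=\lambda APv$, since $[h,Pv]=0$. Hence $B=\lambda P^{-1}AP$, which is the desired conjugacy.

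Suppose instead that $\phi(\mathfrak h_B)\neq\mathfrak h_A$. Then $\mathfrak g_A$ has two distinct codimension-one abelian ideals, $\mathfrak h_A$ and $\mathfrak h':=\phi(\mathfrak h_B)$. Their intersection $\mathfrak k:=\mathfrak h_A\cap\mathfrak h'$ has codimension two. Choose $w\in\mathfrak h'\setminus\mathfrak h_A$; we may write $w=e_0+u$ with $u\in\mathfrak h_A$. Then:
\begin{itemize}
\item $w$ commutes with $\mathfrak k$, because $w$ and $\mathfrak k$ both lie in the abelian ideal $\mathfrak h'$; so $A$ vanishes on $\mathfrak k$;
\item $[w,\mathfrak h_A]=A\mathfrak h_A$ lies in $\mathfrak h_A\cap\mathfrak h'=\mathfrak k$, because both are ideals;
\item $\mathfrak h_A=\mathfrak k\oplus\mathbb R u'$ for some $u'$, so $A$ has rank at most one, its image lies in $\mathfrak k$, and $A$ kills $\mathfrak k$; in particular $A^2=0$.
\end{itemize}
By the symmetric argument, $B$ also satisfies $\operatorname{rank}B\le1$ and $B^2=0$. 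A nilpotent matrix of rank at most one is either $0$ or conjugate to $J_2(0)\oplus 0_{d-2}$. Since $\mathfrak g_A\cong\mathfrak g_B$, the two algebras are either both abelian or both non-abelian, and $A$, $B$ are correspondingly both zero or both nonzero. In either case $A$ and $B$ are conjugate, with $c=1$.

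I expect this last case, where the abelian ideal is not unique, to be the main obstacle. The key step is to show that non-uniqueness of the abelian ideal forces $A$ to be a rank-one square-zero matrix, which is the Heisenberg-type algebra $\mathfrak{h}_3\oplus\mathbb R^{d-2}$. The case analysis above is designed to establish exactly this.
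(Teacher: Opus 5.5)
Your proof is correct, and it argues something the paper does not: the paper gives no proof of this proposition and simply cites Freibert. The only supporting text is the remark before the statement: replacing $e_0$ by $\lambda e_0+h$ rescales $A$, changing the basis of $\mathfrak h$ conjugates it, and ``the ideal $\mathfrak h$ itself need not be unique.''

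Your first direction is exactly that remark. Your Case 1 is the same computation run backwards: $B=\lambda P^{-1}AP$, so $A$ is conjugate to $\lambda^{-1}B$. Here $\lambda\neq0$ holds automatically, since $\phi$ is bijective and maps $\mathfrak h_B$ onto $\mathfrak h_A$.

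What you add is the resolution of the non-uniqueness that the paper only flags. Two distinct codimension-one abelian ideals $\mathfrak h_A\neq\mathfrak h'$ give
\[
A|_{\mathfrak h_A\cap\mathfrak h'}=0,\qquad A\,\mathfrak h_A\subset\mathfrak h_A\cap\mathfrak h'.
\]
Hence $\operatorname{rank}A\le1$ and $A^2=0$. Applying the same argument to $\phi^{-1}$ gives the analogue for $B$. So both algebras are abelian, or both are $\mathfrak h_3\oplus\mathbb R^{d-2}$, and the conclusion holds with $c=1$.

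This dichotomy (a unique codimension-one abelian ideal, versus the abelian and Heisenberg-type exceptions) makes the statement self-contained. It also justifies the paper's subsequent claim that $A$ is determined up to conjugation and nonzero scaling independently of the choice of $\mathfrak h$; the bare citation leaves that claim resting on the reference.
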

Proposition \ref{Conjugate} particularly shows that $A$ is well defined up to conjugation and
nonzero scaling, independently of every choice made above, including that of $\mathfrak h$.

\medskip

At the Lie group level, the simply connected Lie group $G_A$ with Lie algebra $\mathfrak{g}_A$ is realized as $$G_A = \mathbb{R} \ltimes_\phi \mathbb{R}^d,$$ where the real line acts on $\mathbb{R}^d$ by the matrix exponential
$\phi(t)=e^{tA}\in\mathrm{Aut}(\mathbb R^d)$.

\begin{example}
    Every almost abelian Lie group is $2$-step solvable, while $G_A$ is nilpotent if and only if $A$ is a nilpotent matrix \cite{Avetisyan2022}. 
\end{example}

In this subsection, we are interested in almost abelian solvmanifolds. Having recalled above the
concept of almost abelian Lie groups, we now briefly recall some basic terminology concerning
solvmanifolds.

\medskip

\subsubsection{Almost abelian solvmanifolds}
We start by recalling the following:

\begin{defin}
A \emph{solvmanifold} is a quotient space $\Gamma \backslash G$, where $G$ is a simply connected
solvable Lie group and $\Gamma \subset G$ is a discrete cocompact subgroup, called a \emph{lattice}.
When $G=G_A$ is almost abelian, we call $M=\Gamma \backslash G_A$ an \emph{almost abelian
solvmanifold}.
\end{defin}

A Lie group $G$ with Lie algebra $\mathfrak{g}$ is said to be \emph{unimodular} if its
left Haar measure is also right-invariant. In the case where $G$ is connected, this can be checked to be equivalent to
$\tr(\ad_X)=0$ for all $X \in \mathfrak{g}$. For $G_A$ almost abelian, this is equivalent to  $\tr(A)=0$: writing $X=te_0+v$ gives
$\tr(\ad_X)=t\,\tr(A)$. Finally, recall that if a Lie group admits a lattice, then it is
unimodular \cite{Milnor1976}*{Lemma 6.2}. Hence, $\tr(A)=0$ is a necessary condition for the existence of an almost abelian solvmanifold $\Gamma\backslash G_A$. 
Proposition \ref{prop:andrada} gives a sufficient condition:

\begin{prop}\label{cond_lattice}\cite{Andrada2023}*{Proposition 2.5}\label{prop:andrada}
    Let $G_A=\mathbb{R} \ltimes_\phi \mathbb{R}^d$ be a unimodular almost abelian Lie group, where $\phi(t)=e^{tA}$. Then $G_A$ admits a lattice if and only if there exists $t_0 \neq 0$ such that $e^{t_0 A}$ is conjugate to an invertible integer matrix in $\mathrm{SL}(d, \mathbb{Z})$. In this case, a lattice is given by $\Gamma=t_0\mathbb{Z} \ltimes_\phi P\mathbb{Z}^d$, with $P \in \mathrm{GL}(d,\mathbb{R})$ satisfying $P^{-1}e^{t_0A}P \in \mathrm{SL}(d,\mathbb{Z})$.
\end{prop}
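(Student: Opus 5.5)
The statement to prove is Proposition \ref{prop:andrada}: a unimodular $G_A=\mathbb R\ltimes_\phi\mathbb R^d$ admits a lattice if and only if some $e^{t_0A}$, $t_0\neq0$, is conjugate to a matrix in $\mathrm{SL}(d,\mathbb Z)$. I would prove the two implications separately, using the structure of $G_A$ as a semidirect product whose nilradical contains the normal subgroup $N=\{0\}\times\mathbb R^d$.

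\textbf{Sufficiency.} Assume $P^{-1}e^{t_0A}P=C\in\mathrm{SL}(d,\mathbb Z)$ and set $\Gamma=t_0\mathbb Z\ltimes_\phi P\mathbb Z^d$.
\begin{enumerate}
\item First I check that $\Gamma$ is a subgroup. The product in $G_A$ is $(s,v)(t,w)=(s+t,\,v+e^{sA}w)$. For $s=mt_0$ we have $e^{sA}P\mathbb Z^d=PC^m\mathbb Z^d=P\mathbb Z^d$, since $C^{\pm1}$ are integer matrices. Closure under products and inverses follows.
\item Discreteness is clear, because $\Gamma\subset t_0\mathbb Z\times P\mathbb Z^d$, which is discrete in $\mathbb R^{1+d}$.
\item For cocompactness, I take the compact set $K=[0,|t_0|]\times P[0,1]^d$. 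Given $(s,v)$, first left-multiply by an element $(mt_0,0)$ to bring $s$ into $[0,|t_0|)$. Then left-multiply by $(0,Pn)$, which sends $(s,v')$ to $(s,v'+Pn)$, to bring the second coordinate into $P[0,1]^d$. So $\Gamma K=G_A$.
\end{enumerate}
Unimodularity is not needed for this direction.

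\textbf{Necessity.} Let $\Gamma$ be a lattice in $G_A$.
\begin{enumerate}
\item If $A=0$, then $G_A=\mathbb R^{d+1}$ and the claim is trivial: $e^{t_0A}=I\in\mathrm{SL}(d,\mathbb Z)$. So assume $A\neq0$.
\item I need $\Gamma\cap N$ to be a lattice in $N$. For this I invoke Mostow's theorem: for a lattice in a simply connected solvable Lie group and its nilradical $\mathfrak n$, the intersection $\Gamma\cap \exp\mathfrak n$ is a lattice in $\exp\mathfrak n$. Here $\mathfrak n$ is either $\mathfrak h$, or all of $\mathfrak g_A$ when $A$ is nilpotent. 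In the nilpotent case I use Malcev's theory instead, together with the fact that $N$ is a rational ideal when it is the unique codimension-one abelian ideal. If $N$ is not canonical, I replace it by a rational abelian ideal of codimension one: rational subalgebras correspond to Malcev structures, and $[\mathfrak g,\mathfrak g]\subset\mathfrak h$ is rational. I expect this step to be the main obstacle and would lean on the cited literature for it.
\item Write $\Gamma\cap N=P\mathbb Z^d$. The image of $\Gamma$ in $G_A/N\cong\mathbb R$ is then discrete and cocompact, hence equals $t_0\mathbb Z$ with $t_0\neq0$.
\item Pick $\gamma=(t_0,v)\in\Gamma$. Conjugation by $\gamma$ acts on $N$ as $w\mapsto e^{t_0A}w$, and it preserves $\Gamma\cap N$. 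Hence $e^{t_0A}P\mathbb Z^d=P\mathbb Z^d$, so $C:=P^{-1}e^{t_0A}P\in\mathrm{GL}(d,\mathbb Z)$.
\item Finally, unimodularity gives $\det e^{t_0A}=e^{t_0\operatorname{tr}A}=1$, so $C\in\mathrm{SL}(d,\mathbb Z)$. This also shows that $\Gamma$ contains the lattice of the stated form.
\end{enumerate}
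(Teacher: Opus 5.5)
The paper gives no proof of this proposition; it quotes it from \cite{Andrada2023}. Your argument follows the standard proof from that literature. For sufficiency you use an explicit compact $K$ with $\Gamma K=G_A$. For necessity you use Mostow's theorem ($\Gamma\cap N$ is a lattice in the nilradical), followed by the conjugation action of an element of $\Gamma$ lying over $t_0$. Sufficiency and the non-nilpotent case of necessity are correct. The discreteness of the image of $\Gamma$ in $G_A/N\cong\mathbb R$ deserves one line: translating by $\Gamma\cap N=P\mathbb Z^d$ brings any element into a bounded set.

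The step you single out as the main obstacle, nilpotent $A\neq0$, is the one place where your text is not yet a proof. Three things are missing:
\begin{itemize}
\item You assert that the unique codimension-one abelian ideal is rational for the Malcev structure, but uniqueness by itself is not a rationality statement. It needs an argument; for instance, when $A^2\neq0$ one has $\mathfrak h=\mathfrak z_{\mathfrak g}([\mathfrak g,\mathfrak g])$, which is rational.
\item The non-unique case needs an explicit rational choice of ideal.
\item After changing the ideal, you would still have to relate the new matrix $A'$ back to $A$ via Proposition \ref{Conjugate}.
\end{itemize}
You can drop this branch entirely. For nilpotent $A$ and any $t\neq0$, write $e^{tA}-I=tA\,(I+\tfrac t2A+\cdots)$; the second factor is invertible and commutes with $A$. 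Hence the powers of $e^{tA}-I$ have the same ranks as those of $A$, and $e^{tA}$ is conjugate over $\mathbb R$ to $\bigoplus_i J_{m_i}(1)\in\mathrm{SL}(d,\mathbb Z)$. So the right-hand side of the equivalence holds unconditionally when $A$ is nilpotent. Necessity then only has to be argued when the nilradical is $\mathfrak h$, where Mostow applies directly.

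Two minor points. First, your closing claim that $\Gamma$ contains $t_0\mathbb Z\ltimes_\phi P\mathbb Z^d$ is false in general. The elements of $\Gamma$ over $t_0$ form the coset $(t_0,v)(\Gamma\cap N)$, which need not contain $(t_0,0)$. For example, conjugating $t_0\mathbb Z\ltimes_\phi P\mathbb Z^d$ by $(0,u)$ gives a lattice with the same $t_0$ and $P$. It contains $(t_0,0)$ only if $(e^{t_0A}-I)u\in P\mathbb Z^d$, which fails for suitable $u$ whenever $e^{t_0A}\neq I$. This does not affect the proposition, which only asserts that the displayed set is a lattice, and your sufficiency part proves that. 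Second, unimodularity is not needed in your last step: $\det C\in\{\pm1\}$ and $\det C=e^{t_0\tr A}>0$ already give $\det C=1$.
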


\medskip

With the conventions above, the elements of $\Gamma\backslash G_A$ are the cosets $\Gamma g$. Thus,
every left-invariant tensor on $G_A$ is also $\Gamma$-invariant, thus descends 
to $\Gamma\backslash G_A$. In light of this, next, we describe the left-invariant generalized complex structures in $G_A$, which ultimately will lead to generalized complex
structures in almost abelian solvmanifolds.

\medskip

\subsection{Generalized complex structures on almost abelian solvmanifolds}
\label{sec:gce-aasm}

For a given Lie group $G$, left-translation $L_g$ by an element $g\in G$ induces, via its derivative,
left-translations of vectors and covectors on $G$:
$$g * (X_h + \xi_h):=(\d L_g)_hX_h+(L_{g^{-1}})^*_{gh}\xi_h\in
\left(T_{gh}G\right)\oplus\left(T^*_{gh}G\right).$$
Notice that $g*$ preserves the pairing \eqref{eq:Clifford-pairing}, since
$(L_{g^{-1}})^*_{gh}\xi_h$ evaluated on $(\d L_g)_hY_h$ is just $\xi_h(Y_h)$. A generalized complex
structure $\mathcal{J}$ on $G$ is said to be \emph{left-invariant} if, for any $g,h \in G$ and
$X_h+\xi_h\in \left(T_hG\right)\oplus\left(T^*_hG\right)$, we have
$$\mathcal{J}_{gh}(g * (X_h+\xi_h))=g * (\mathcal{J}_h(X_h+\xi_h)).$$
Of course, to fully characterize a left-invariant generalized complex structure it suffices to
understand it at the level of the Lie algebra $T_eG\cong \mathfrak g$: we identify the space of
left-invariant sections of $T_G \oplus T_G^*$ with $\mathfrak{g} \oplus \mathfrak{g}^*$ and notice that
$$\mathcal{J}_g(X+\xi)=g * \mathcal{J}_{e}(g^{-1} * (X+\xi)).$$
In particular, the local type of $\mathcal J$ is the same at every point, being the local type of
$\mathcal J_e$.

Following \cite{Barberis}, on left-invariant sections the Courant bracket simplifies to the Lie bracket
of the cotangent algebra $T^*\mathfrak{g}=\mathfrak g\ltimes_{\ad^*}\mathfrak g^*$:
$$[[X+\xi, Y+\eta]]_\mathfrak{g}=[X,Y]+\ad^*_X\eta - \ad^*_Y\xi.$$
Here, $\ad^*$ denotes the representation of $\mathfrak g$ on $\mathfrak g^{\ast}$ induced by the
coadjoint representation of $G$, namely $(\ad^*_X\eta)(Y)=-\eta([X,Y])$. Indeed, for left-invariant
$X$ and $\eta$ the function $\eta(X)$ is constant. Thus, $\mathcal L_X\eta=\iota_X\d\eta=\ad^*_X\eta$
and the term $\frac{1}{2}\d(\iota_X \eta - \iota_Y \xi)$ in \eqref{eq:Courant-Bracket} vanishes. 

\medskip

Based on the description of generalized complex structures through spinors, left-invariant generalized
complex structures can be readily treated following \cite{Cavalcanti2004}. A major simplification comes
from the fact that the local type is constant everywhere and that the canonical line bundle $K_L$ is
trivial. We can then choose a global trivialization
$$\rho= \exp(B+{\mathbf i}\omega)\wedge\Omega,$$
where $B,\omega \in {\bigwedge}^2 \mathfrak{g}^*$ and
$\Omega=\theta_1 \wedge\cdots\wedge \theta_k$ is decomposable, with
$\theta_1,\ldots,\theta_k\in\mathfrak g^*\otimes\mathbb C$ linearly independent. Furthermore, the
integrability condition needs only be posed for left-invariant sections
$X+\xi \in (\mathfrak{g} \oplus \mathfrak{g}^*) \otimes_{\mathbb{R}}\mathbb{C}$. 

\medskip

\section{Admissible types: existence and algebraic obstructions}

In this section, we investigate the admissible types of left-invariant generalized complex structures on almost abelian Lie groups and, consequently, on their solvmanifolds. We first provide a complete spectral classification for the diagonalizable case, including the criteria for generalized Calabi--Yau structures. For the general case, we introduce a constructive method via algebraic reduction and establish strict upper bounds on the type dictated by the Jordan structure of the defining matrix.

\subsection{Classification in the diagonal case}

Since the local type of a left-invariant generalized complex structure is constant, we shall simply call it its \emph{type}. To approach the classification question, recall from Proposition \ref{Conjugate} that isomorphism classes of almost abelian Lie algebras of dimension $d+1$ are in bijection with conjugacy classes of matrices in $M(d,\mathbb R)$, modulo multiplication by a nonzero real scalar. 

Below, we establish the complete classification of admissible types of left-invariant generalized complex structures on $2n$-dimensional almost abelian Lie groups with Lie algebra $\mathfrak{g}_A=\mathbb{R}e_0 \ltimes_A \mathfrak{h}$, where the adjoint action of $e_0$ on the abelian ideal $\mathfrak{h} \cong \mathbb{R}^{2n-1}$ is represented by a matrix $A$ which is diagonalizable over $\mathbb R$. In this case, Proposition \ref{Conjugate} allows us to assume that $A$ is diagonal, which we do from now on. The purpose of this subsection is to prove the following:

\begin{theo}\label{thm:os-meninos-vem-como}
    Let $G_A$ be a $2n$-dimensional almost abelian Lie group with Lie algebra
    $\mathfrak{g}_A= \mathbb{R}e_0 \ltimes_A \mathfrak{h}$, where $A$ is diagonal. Then $G_A$ admits a
    left-invariant generalized complex structure of type $k$ if and only if the diagonal entries of
    $A$, listed with multiplicity, can be reordered in one of the following forms:
    \begin{enumerate}[label=(\alph*)]
        \item $\mathrm{spec}(A) = \{\underbrace{\lambda_1, \lambda_1, \dots, \lambda_{k}, \lambda_{k}}_{k \text{ identical pairs}}, \mu, \underbrace{\zeta_1, -\zeta_1, \dots, \zeta_{n-k-1}, -\zeta_{n-k-1}}_{n-k-1 \text{ opposite pairs}} \}$, with $k \le n-1$;
        \item $\mathrm{spec}(A) = \{\underbrace{\lambda_1, \lambda_1, \dots, \lambda_{k-1}, \lambda_{k-1}}_{k-1 \text{ identical pairs}}, \mu, \underbrace{\zeta_1, -\zeta_1, \dots, \zeta_{n-k}, -\zeta_{n-k}}_{n-k \text{ opposite pairs}} \}$, with $k \ge 1$.
    \end{enumerate}
    Here, $\mathrm{spec}(A)$ is understood as a multiset of $2n-1$ elements.
\end{theo}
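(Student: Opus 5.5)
We plan to work throughout with the spinor description of Lemma \ref{lem:integ_equiv}, specialized to $\mathfrak g_A$ with $A=\mathrm{diag}(a_1,\dots,a_{2n-1})$. Fix the dual basis $e^0,e^1,\dots,e^{2n-1}$. Then $\d e^0=0$ and $\d e^i=-a_i\,e^0\wedge e^i$. Consequently, for any form $\gamma$ on $\mathfrak h$ of multi-index $I$ we have $\d(e^I)=-a_I\,e^0\wedge e^I$, where $a_I=\sum_{i\in I}a_i$. Thus $\d$ is \emph{diagonal} in the monomial basis. This is the structural fact that reduces everything to spectral bookkeeping. The proof splits into sufficiency (explicit constructions) and necessity (extracting the pairings from integrability plus non-degeneracy).

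\textbf{Sufficiency.} For form (a), choose the ordering $e^1,e^2$ (eigenvalue $\lambda_1$), \dots, $e^{2k-1},e^{2k}$ (eigenvalue $\lambda_k$), $e^{2k+1}$ (eigenvalue $\mu$), and the remaining coordinates in opposite pairs $f^j,g^j$ with eigenvalues $\zeta_j,-\zeta_j$. Set
\[
\Omega=(e^1+\mathbf i e^2)\wedge\cdots\wedge(e^{2k-1}+\mathbf i e^{2k}),\qquad
\omega=e^0\wedge e^{2k+1}+\sum_j f^j\wedge g^j,\qquad B=0 .
\]
Since each $\theta_j$ is a combination of covectors with the same eigenvalue, $\d\theta_j=-\lambda_j e^0\wedge\theta_j$, so $\d\Omega=\alpha\wedge\Omega$ with $\alpha=-(\sum\lambda_j)e^0$, and we take $X=0$. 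Next, $\d\omega=0$ because $\d(f^j\wedge g^j)=-(\zeta_j-\zeta_j)e^0\wedge f^j\wedge g^j=0$ and $\d(e^0\wedge e^{2k+1})=0$. Non-degeneracy $\omega^{n-k}\wedge\Omega\wedge\bar\Omega\ne0$ is immediate. For form (b), the variable $e^0$ must be absorbed into $\Omega$ instead: take $\Omega=(e^0+\mathbf i e^{\mu})\wedge\prod_{j<k}(e^{2j-1}+\mathbf i e^{2j})$ and $\omega=\sum_j f^j\wedge g^j$. Now $\d(e^0+\mathbf i e^\mu)=-\mathbf i\mu\, e^0\wedge e^\mu$, which is a multiple of $e^0\wedge(e^0+\mathbf i e^\mu)$, so condition (b) of Lemma \ref{lem:integ_equiv} holds. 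Moreover, $\d\omega\wedge\Omega=0$ since $\d\omega=0$. The same computations give the closedness criteria of Theorem \ref{thmC}, which is a useful consistency check.

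\textbf{Necessity.} Start from an integrable $\rho=\exp(B+\mathbf i\omega)\wedge\Omega$ of type $k$. We separate two cases according to whether $e^0$ lies in the complex span $\Theta=\mathrm{span}\{\theta_i\}$, modulo its conjugate, or more precisely whether $\Theta+\bar\Theta$ contains $e^0$. Condition (b), $\d\Omega=\alpha\wedge\Omega$, says that $\Theta$ is a $\d$-closed differential ideal. Because $\d\theta=-e^0\wedge A^*\theta_{\mathfrak h}$, where $\theta_{\mathfrak h}$ is the $\mathfrak h^*$-part of $\theta$, this should force the $\mathfrak h$-projection of $\Theta$ to be $A^*$-invariant modulo $e^0$. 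For a real diagonal $A$, a complex $A^*$-invariant subspace $V$ with $V\cap\bar V$ small is spanned by eigencovectors. Hence $(V+\bar V)/(V\cap \bar V)$ pairs up equal eigenvalues, producing the identical pairs $\lambda_j$. The leftover variable $\mu$ is the one coupled with $e^0$ in case (b), or in case (a) the one remaining after removing the symplectic part.

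For the complementary $2(n-k)$ directions, we pass to $\ker\Omega\cap\ker\bar\Omega$, i.e. the quotient by $\Theta+\bar\Theta$. Condition (c), $\d\omega\wedge\Omega\wedge\bar\Omega=0$ after combining with its conjugate, makes $\omega$ descend to a closed non-degenerate form on the quotient Lie algebra, which is again almost abelian (or abelian) and diagonal. There we invoke the symplectic classification of \cite{Arroyo2025} (the case $k=0$). It yields opposite pairs $\zeta,-\zeta$ together with one free eigenvalue when $e^0$ survives in the quotient, which is form (a); when $e^0$ has been absorbed into $\Theta$, the quotient is abelian-type with all eigenvalues in opposite pairs, which is form (b).

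\textbf{Main obstacle.} We expect the hardest step to be making this quotient argument rigorous. This requires showing that $\Theta+\bar\Theta$ and $\Theta\cap\bar\Theta$ can be chosen compatibly with the eigen-decomposition, including when eigenvalues repeat and the $\theta_i$ mix eigenspaces. It also requires checking that the induced bracket on the quotient is the diagonal restriction of $A$. Our first approach would be to normalize $\Theta$ by a complex Gaussian elimination adapted to eigenspaces of $A^*$. We would then use that eigenspaces of a real diagonal matrix are real, so that $V\cap\bar V$ and $V+\bar V$ are real $A^*$-invariant subspaces. A dimension count then matches the number of pairs to $k$ or $k-1$.
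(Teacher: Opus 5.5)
Your sufficiency constructions are the paper's. For necessity in the case $e^0\notin\Theta+\overline\Theta$, your route works and differs from the paper's bidegree counting, provided you read the ``quotient'' as the subalgebra $D=\ker\Omega\cap\ker\overline\Omega=\mathrm{Ann}(\Theta+\overline\Theta)\subset\mathfrak g_A$. It is a subalgebra because $\d\Omega=\alpha\wedge\Omega$ gives $\d\theta_i\wedge\Omega=0$. The condition $\d\omega\wedge\Omega\wedge\overline\Omega=0$ is then exactly $\d_D(\omega|_D)=0$, and non-degeneracy is exactly $(\omega|_D)^{n-k}\neq0$. Since $D\cong\mathfrak g_{A|_{D\cap\mathfrak h}}$ is almost abelian of dimension $2(n-k)$ with diagonalizable matrix, \cite{Arroyo2025} gives $n-k-1$ opposite pairs plus one leftover eigenvalue.

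The gap is the other case, $e^0\in\Theta+\overline\Theta$, which is the paper's Case 2, $W\cap\overline W\neq\{0\}$. There $D\subset\mathfrak h$ is an \emph{abelian} subalgebra. So ``$\omega|_D$ is closed and non-degenerate on $D$'' says nothing about $A|_D$, whose eigenvalues do not enter the bracket of $D$. The symplectic classification applied to an abelian algebra cannot produce opposite pairs of $A$. Concretely, $\Omega\wedge\overline\Omega$ is now divisible by $e^0$, and so is $\d\omega=-e^0\wedge\Psi(\omega_{ab})$; hence $\d\omega\wedge\Omega\wedge\overline\Omega=0$ holds for every $\omega$.

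Take $n=2$, $k=1$, $\Omega=\mathbf i e^0+e^1$, $\omega=e^2\wedge e^3$. Every constraint your reduction extracts holds for an arbitrary spectrum $\{a_1,a_2,a_3\}$: $\Theta$ generates a differential ideal, $W=\mathbb C e^1$ is invariant, and $\omega|_D$ is closed and non-degenerate. Yet $\d\omega\wedge\Omega=-(a_2+a_3)\,e^0\wedge e^2\wedge e^3\wedge e^1$, so integrability actually forces $a_2=-a_3$.

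What is missing is the full condition $\d\beta\wedge\Omega=0$, rather than its consequence after wedging with $\overline\Omega$. It says that $\d\beta$ vanishes on triples from $\mathrm{Ann}(\Theta)$. Since $e^0\notin\Theta$ (otherwise $\Theta\cap\overline\Theta\neq\{0\}$), $\mathrm{Ann}(\Theta)$ contains, besides $D\otimes\mathbb C$, a vector $X$ with $e^0(X)=1$. Evaluating $\d\beta=-e^0\wedge\Psi(\beta_{ab})$ on $(X,u,v)$ with $u,v\in D$ gives $\Psi(\beta_{ab})|_{\bigwedge^2 D}=0$. Its imaginary part says $A|_D\in\mathfrak{sp}(D,\omega|_D)$. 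A diagonalizable infinitesimally symplectic endomorphism of the $2(n-k)$-dimensional space $D$ has spectrum consisting of $n-k$ opposite pairs. This is exactly what the paper extracts from $\Psi(\beta_{ab})\wedge\Omega_{ab}=0$ in Lemma \ref{lemma-integ-equiv_new}(b), via the two Observations in its proof.
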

Since invariant generalized complex structures on $M=\Gamma\backslash G_A$ correspond to left-invariant
ones on $G_A$, we obtain at once the solvmanifold version:

\begin{cor}\label{cor:solv-version}
    Let $M=\Gamma\backslash G_A$ be a $2n$-dimensional almost abelian solvmanifold with $A$ diagonal.
    Then $M$ admits an invariant generalized complex structure of type $k$ if and only if
    $\mathrm{spec}(A)$ is as in Theorem \ref{thm:os-meninos-vem-como}. In this case, $\tr A=0$ and thus $\mu=-2\sum_i\lambda_i$.
\end{cor}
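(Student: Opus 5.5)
The corollary follows from Theorem \ref{thm:os-meninos-vem-como} together with the correspondence between invariant structures on $\Gamma\backslash G_A$ and left-invariant structures on $G_A$; the trace condition then comes from unimodularity. The plan is as follows.

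First, make the correspondence precise. A left-invariant generalized complex structure $\mathcal J$ on $G_A$ is determined by $\mathcal J_e$ on $\mathfrak g_A\oplus\mathfrak g_A^*$. Since $g*$ preserves the pairing, $\mathcal J$ is $\Gamma$-invariant and descends to $M=\Gamma\backslash G_A$. Orthogonality, $\mathcal J^2=-\mathrm{Id}$ and integrability are all local conditions, and the covering $G_A\to M$ is a local diffeomorphism. Hence the descended structure is a generalized complex structure, and its local type equals that of $\mathcal J_e$. Conversely, an \emph{invariant} structure on $M$ is by definition one descending from a left-invariant one, and its lift is left-invariant and integrable on $G_A$ with the same type.

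With this correspondence, $M$ carries an invariant generalized complex structure of type $k$ exactly when $G_A$ carries a left-invariant one of type $k$. By Theorem \ref{thm:os-meninos-vem-como}, this holds if and only if $\mathrm{spec}(A)$ admits one of the reorderings (a) or (b).

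For the final assertion, the existence of the lattice $\Gamma$ forces $G_A$ to be unimodular by \cite{Milnor1976}*{Lemma 6.2}. We showed earlier that unimodularity is equivalent to $\tr A=0$. The trace is the sum of the multiset $\mathrm{spec}(A)$. In either reordering each opposite pair $\zeta_j,-\zeta_j$ contributes zero and each identical pair contributes $2\lambda_i$. Therefore
$$0=\tr A=\mu+2\sum_i\lambda_i,$$
which gives $\mu=-2\sum_i\lambda_i$, with the sum over $i\le k$ in case (a) and over $i\le k-1$ in case (b).

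No step is a genuine obstacle. The only point needing care is to say explicitly that integrability and type are local, so they pass through the covering map, and that "invariant" on $M$ means descending from a left-invariant structure.
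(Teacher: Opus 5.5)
Your proposal is correct and follows the paper's route. The correspondence between invariant structures on $\Gamma\backslash G_A$ and left-invariant ones on $G_A$ reduces the statement to Theorem \ref{thm:os-meninos-vem-como}, and unimodularity, forced by the lattice, gives $\tr A=\mu+2\sum_i\lambda_i=0$; your remarks that integrability and type are local and pass through the covering map only make explicit what the paper leaves implicit.
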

Theorem \ref{thm:os-meninos-vem-como} was inspired by the existing classification of left-invariant
complex (type $n$) and symplectic (type $0$) structures on almost abelian Lie groups
\cite{Arroyo2025}, and recovers it as the extremal cases:

\begin{theo}\label{Sym_Com_Diag}\cite{Arroyo2025}
    Let $G_A$ be a $2n$-dimensional almost abelian Lie group whose Lie algebra
    $\mathfrak{g}_A=\mathbb{R}e_0 \ltimes_A \mathfrak{h}$ is determined by a diagonal matrix $A$. Then
    $G_A$ admits a left-invariant
    \begin{enumerate}[label=(\alph*)]
        \item symplectic structure if and only if $A$ has $n-1$ disjoint pairs of opposite eigenvalues;
        \item complex structure if and only if $A$ has $n-1$ disjoint pairs of identical eigenvalues.
    \end{enumerate}
\end{theo}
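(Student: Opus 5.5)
The plan is to work entirely at the level of $\mathfrak g_A=\mathbb Re_0\ltimes_A\mathfrak h$, with $A=\mathrm{diag}(a_1,\dots,a_{2n-1})$ in the basis $e_1,\dots,e_{2n-1}$ of $\mathfrak h$. Let $e^0,\dots,e^{2n-1}$ be the dual basis. The Chevalley--Eilenberg differential is then $\d e^0=0$ and $\d e^i=-a_i\,e^0\wedge e^i$. For both parts I will first reduce to a linear-algebra condition on $\mathfrak h$ and then read that condition off the spectrum.

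\emph{Symplectic case (a).} I will write an arbitrary $2$-form as $\omega=e^0\wedge\alpha+\sigma$ with $\alpha\in\mathfrak h^*$ and $\sigma\in\bigwedge^2\mathfrak h^*$. The term $e^0\wedge\alpha$ is always closed, and $\d\sigma=-e^0\wedge A\cdot\sigma$, where $A$ acts on $\bigwedge^2\mathfrak h^*$ as a derivation. Writing $\sigma=\sum\sigma_{ij}e^i\wedge e^j$, closedness is therefore equivalent to
\[
\sigma_{ij}(a_i+a_j)=0\quad\text{for all } i,j .
\]
Non-degeneracy of $\omega$ forces $\operatorname{rank}\sigma=2n-2$: the rank is at most $2n-2$ since $\dim\mathfrak h$ is odd, and it is at least $2n-2$ since $e^0\wedge\alpha$ has rank $2$.

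Closedness makes $\sigma$ pair $V_\lambda$ only with $V_{-\lambda}$. Hence $\operatorname{rank}\sigma\le\sum_{\lambda>0}2\min(\dim V_\lambda,\dim V_{-\lambda})+2\lfloor\dim V_0/2\rfloor$. This sum equals $2n-2$ exactly when all but one eigenvalue (with multiplicity) can be grouped into $n-1$ opposite pairs. Conversely, given such a pairing, I will take $\sigma=\sum_{r=1}^{n-1}e^{i_r}\wedge e^{j_r}$ over the paired indices and $\alpha=e^{m}$ for the leftover index $m$. Then $\omega^n\neq0$ and $\d\omega=0$.

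\emph{Complex case (b).} Let $J$ be an integrable almost complex structure on $\mathfrak g_A$. Then $\mathfrak h_1:=\mathfrak h\cap J\mathfrak h$ is $J$-invariant of dimension $2n-2$. Set $e_1':=Je_0$, and rescale or translate $e_0$ so that $e_1'\in\mathfrak h$ spans a complement of $\mathfrak h_1$ in $\mathfrak h$. Relative to $\mathfrak h=\mathbb Re_1'\oplus\mathfrak h_1$, write
\[
A=\begin{pmatrix} a & w\\ v & B\end{pmatrix},\qquad J_1:=J|_{\mathfrak h_1}.
\]
I will evaluate the Nijenhuis tensor $N(X,Y)=[JX,JY]-J[JX,Y]-J[X,JY]-[X,Y]$ on the pairs $(e_0,x)$ with $x\in\mathfrak h_1$. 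The brackets inside $\mathfrak h$ vanish, so $N(e_0,x)=0$ reduces to $J A J x=A x$.

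Splitting this identity into its $e_1'$- and $\mathfrak h_1$-components should give $w=0$ and $BJ_1=J_1B$. Here I use that $J$ maps $\mathfrak h_1$ to itself and that $Je_1'=-e_0\notin\mathfrak h$, so any $e_1'$-component of $Ax$ would produce an $e_0$-component on the left-hand side. With $w=0$, $A$ is block triangular, so $\mathrm{spec}(A)=\{a\}\cup\mathrm{spec}(B)$. Since $A$ is diagonalizable, so is $B$, and $B$ has real spectrum. A real diagonalizable $B$ commuting with a complex structure $J_1$ has $J_1$-invariant eigenspaces, which are therefore even-dimensional. So $\mathrm{spec}(B)$ splits into $n-1$ identical pairs.

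Conversely, given $n-1$ identical pairs and one leftover eigenvalue $\mu$, I will define $Je_0=e_m$ for the leftover index $m$, and $Je_{i_r}=e_{j_r}$ on each identical pair. One then checks $N=0$ directly: $A$ commutes with $J$ on $\mathfrak h_1$, and $e_m$ is an eigenvector, so every term of $N$ vanishes.

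The step I expect to be the main obstacle is the Nijenhuis bookkeeping in the complex case. The choice of $e_0$ and of the complement $e_1'$ must be made so that $JA J=A$ on $\mathfrak h_1$ really separates into $w=0$ and $[B,J_1]=0$. If the naive choice fails, I would instead use the pair $(e_0,e_1')$ and the freedom $e_0\mapsto e_0+h$ to normalize the block $w$ before comparing components.
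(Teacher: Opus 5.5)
Your proof is correct, and it argues differently from the paper. The paper gives no direct proof: it cites \cite{Arroyo2025} and notes that the statement is the case $k=0$, $k=n$ of Theorem~\ref{thm:os-meninos-vem-como}, whose proof works on the dual side with pure spinors.

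At type $0$, the paper has $W=0$ and non-degeneracy gives $\omega_{ab}^{\,n-1}\neq0$. A single nonzero monomial of $\omega_{ab}^{\,n-1}$ then yields $n-1$ disjoint index pairs with $\omega_{pq}\neq0$, hence $\lambda_p=-\lambda_q$. Your block-rank bound is an equivalent way of extracting that matching.

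At type $n$, the paper uses the $A^T$-invariant subspace $W\subset\mathfrak h^*\otimes\mathbb C$ spanned by the restrictions of the $(1,0)$-forms. It proves $\dim_{\mathbb C}(W\cap\overline W)=1$ from $\Omega\wedge\overline\Omega\neq0$, and gets identical pairs from the real and imaginary parts of the non-real eigenvectors. Your statement that $\mathfrak h_1=\mathfrak h\cap J\mathfrak h$ is $A$-invariant with $[A|_{\mathfrak h_1},J_1]=0$ is exactly the annihilator-dual of this:
\begin{itemize}
\item $W$ is the annihilator of the $(0,1)$-part $\mathfrak h_1^{0,1}$ of $\mathfrak h_1\otimes\mathbb C$;
\item the line $W\cap\overline W$ is spanned by the real covector annihilating $\mathfrak h_1$, and it carries your leftover eigenvalue $a$;
\item your observation that the $J_1$-invariant eigenspaces of $B$ are even-dimensional replaces the eigenvector bookkeeping.
\end{itemize}
Your route is more elementary and self-contained for the two extremal types. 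Moreover, the structural conclusion $w=0$, $[B,J_1]=0$ does not use diagonalizability at all. The spinor route costs more bookkeeping but treats every type $k$ at once.

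Two small corrections.

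First, with your convention for the Nijenhuis tensor, $N(e_0,x)=-JAJx-Ax$. So the condition is $JAJx=-Ax$, i.e.\ $AJx=JAx$. As written, $JAJx=Ax$ would mean anticommutation. Your component comparison ($w=0$ from the $e_0$-component, $BJ_1=J_1B$ from the $\mathfrak h_1$-component) is the correct consequence of the commuting form, so only the display needs fixing.

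Second, the normalization you flag as the main obstacle works as is. Since $J\mathfrak h\neq\mathfrak h$, one has $\mathfrak g=\mathfrak h+J\mathfrak h$. Translating $e_0$ by an element of $\mathfrak h$, which leaves $A$ unchanged, therefore places it in $J\mathfrak h\setminus\mathfrak h$. Then $Je_0\in\mathfrak h\setminus\mathfrak h_1$, and no fallback is needed.
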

Indeed, taking $k=0$ in Theorem \ref{thm:os-meninos-vem-como}(a) and $k=n$ in Theorem \ref{thm:os-meninos-vem-como}(b) yields
the two conditions above. Before we dive into the proof of Theorem
\ref{thm:os-meninos-vem-como}, let us explore some consequences.
\medskip

The first is immediate:

\begin{cor}\label{cor:adjacent-types}
    Let $G_A$ be an almost abelian Lie group with Lie algebra
    $\mathfrak{g}_A = \mathbb{R}e_0 \ltimes_A \mathfrak{h}$, where $A$ is diagonal. If $G_A$
    admits a left-invariant generalized complex structure of type $k$, then $G_A$ also admits a
    left-invariant generalized complex structure of type $k+1$ or of type $k-1$.
\end{cor}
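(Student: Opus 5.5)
The plan is to read this off directly from Theorem \ref{thm:os-meninos-vem-como}, by observing that form (a) for type $k$ and form (b) for type $k+1$ are literally the same spectral condition. Suppose $G_A$ admits a left-invariant generalized complex structure of type $k$. By Theorem \ref{thm:os-meninos-vem-como}, $\mathrm{spec}(A)$ can be reordered in form (a) with $k\le n-1$, or in form (b) with $k\ge 1$. I treat the two cases separately.

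\emph{Case (a).} Here
\[
\mathrm{spec}(A)=\{\lambda_1,\lambda_1,\dots,\lambda_k,\lambda_k,\ \mu,\ \zeta_1,-\zeta_1,\dots,\zeta_{n-k-1},-\zeta_{n-k-1}\}.
\]
Set $k'=k+1$. The same list consists of $k'-1=k$ identical pairs, the distinguished eigenvalue $\mu$, and $n-k'=n-k-1$ opposite pairs. This is exactly form (b) for type $k'$, and the constraint $k'\ge1$ holds automatically. Since $k\le n-1$, we also have $k'\le n$, so $k'$ is an admissible type. The converse direction of Theorem \ref{thm:os-meninos-vem-como} then yields a structure of type $k+1$.

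\emph{Case (b).} Here the list has $k-1$ identical pairs, $\mu$, and $n-k$ opposite pairs. Set $k'=k-1$. Then $n-k=n-k'-1$, so the list is form (a) for type $k'$. The constraint $k'\le n-1$ holds because $k\le n$, and $k'\ge 0$ holds because $k\ge1$. Theorem \ref{thm:os-meninos-vem-como} therefore gives a structure of type $k-1$.

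There is no real obstacle. The only care needed is checking the range constraints $k\le n-1$ and $k\ge1$ attached to each form, so that the shifted type lies in $\{0,\dots,n\}$. This is exactly what rules out the degenerate possibility of moving outside the allowed range at the extremal types.
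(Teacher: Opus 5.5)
Your proposal is correct and follows the paper's proof: the paper likewise observes that form (a) of Theorem \ref{thm:os-meninos-vem-como} for type $k$ and form (b) for type $k+1$ describe the same multiset, and applies the converse direction of the theorem. Your explicit check of the range constraints ($k\le n-1$ giving $k+1\le n$, and $k\ge1$ giving $k-1\ge0$) makes precise a step the paper leaves implicit.
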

\begin{proof}
   Notice that Theorem \ref{thm:os-meninos-vem-como}(a) for $k$ and Theorem \ref{thm:os-meninos-vem-como}(b) for $k+1$ describe the same multiset. Hence, if
    $\mathrm{spec}(A)$ is as in item (a), then $G_A$ admits a structure of type $k+1$. If it is as
    in item (b), then $G_A$ admits one of type $k-1$.
\end{proof}
Theorem \ref{thm:os-meninos-vem-como}, in its own right, allows us to construct almost abelian Lie
groups admitting only left-invariant generalized complex structures of \emph{intermediate type}, i.e.,
non-extremal, as the following example explains.

\begin{example} \label{ex:int_types_only}
    For any dimension $2n\ge6$ (i.e., $n\ge3$), consider a diagonal matrix $A$ with
    \[ \mathrm{spec}(A)=\{\lambda_1, \lambda_1, \ldots, \lambda_{n-2}, \lambda_{n-2}, \mu, \zeta, -\zeta\}, \]
    where $\lambda_i$ and $\zeta$ are non-zero. To avoid any additional matchings, we require $\lambda_i \neq \pm \lambda_j$ for $i \neq j$, $\lambda_i \neq \pm \zeta$, and that the singleton $\mu$ is distinct from all $\pm \lambda_i$ and $\pm \zeta$.

    Theorem \ref{thm:os-meninos-vem-como} says that the almost abelian Lie group associated with this
    matrix admits left-invariant generalized complex structures of type $n-2$, by item (a) with $k=n-2$,
    and of type $n-1$, by item (b) with $k=n-1$, in accordance with Corollary \ref{cor:adjacent-types}.
    However, by Theorem \ref{Sym_Com_Diag}, it is strictly obstructed from admitting a left-invariant complex or symplectic structure: the set contains exactly $n-2$ identical pairs (which rules out complex structures since $n-1$ are needed) and exactly one opposite pair (which rules out symplectic structures since $n-1 \ge 2$ are needed).
\end{example}

We now provide a concrete arithmetic realization of the spectral configuration from Example \ref{ex:int_types_only} for $n=5$. This explicit construction ensures the existence of a lattice on the corresponding almost abelian Lie group.\footnote{The authors thank B. Brienza for posing this question.}

\begin{example} \label{ex:dim10-lattice}
 Let $f(x)=x^3-5x^2+6x-1$ and $g(x)=x^2-3x+1$. The values $f(0)=-1$, $f(1)=1$, $f(2)=-1$, and $f(4)=7$ give three distinct positive roots in $(0,1),(1,2),(2,4)$. Neither $1$ nor $-1$ is a root, so $f$ is irreducible over $\mathbb Q$. The roots of $g$ are $z=(3+\sqrt5)/2$ and $z^{-1}$. Write $y_1,y_2,y_3$ for the roots of $f$, set
$\lambda_i=\log y_i$, $\zeta=\log z$, $\mu=0$, and put
\[
A=\operatorname{diag}(\lambda_1,\lambda_1,\lambda_2,\lambda_2,
\lambda_3,\lambda_3,\mu,\zeta,-\zeta).
\]
The identity $y_1y_2y_3=1$ gives $\sum_i\lambda_i=0$ and $\operatorname{tr}A=0$. If $i\ne j$ and $y_i y_j=1$, the remaining root would be $1$. If $i=j$ and $y_i^2=1$, positivity also gives $y_i=1$. Both are impossible. The remainder of $f$ modulo $g$ is $1-x$, so a common root would be $1$, whereas $g(1)=-1$. Thus $\lambda_i\ne\pm\lambda_j$ for $i\ne j$,
$\lambda_i\ne0$, and $\lambda_i\ne\pm\zeta$.

    In this case, $e^{A}$ is diagonal with eigenvalues $y_1, y_1, y_2, y_2, y_3, y_3, 1, z, z^{-1}$, so that its characteristic and minimal polynomials are
    \begin{align}
        P(x) &= \left(x^3 - 5x^2 + 6x - 1\right)^2 \left(x^2 - 3x + 1\right)(x - 1)\in\mathbb Z[x], \\
        m(x) &= \left(x^3 - 5x^2 + 6x - 1\right)\left(x^2 - 3x + 1\right)(x - 1)\in\mathbb Z[x].
    \end{align}
    As $e^A$ is diagonalizable, its minimal polynomial is $m$ and its invariant factors are $f_1(x)=x^3-5x^2+6x-1$ and $f_2(x)=m(x)$, which satisfy $f_1\mid f_2$ and $f_1f_2=P$. Both lie in $\mathbb Z[x]$, so the rational canonical form of $e^A$, namely the block-diagonal matrix $C=C_{f_1}\oplus C_{f_2}$ of the companion matrices of $f_1$ and $f_2$, has integer entries. Since $\det C_{f}=(-1)^{\deg f}f(0)$ for any monic $f$, we get $\det C=(-1)^3f_1(0)\cdot(-1)^6f_2(0)=1$, so that $C\in\mathrm{SL}(9,\mathbb Z)$ and in particular $C^{-1}$ is integral as well. Finally, $e^A$ and $C$ are conjugate over $\mathbb R$, so the condition of Proposition \ref{prop:andrada} holds for $t_0=1$, providing a lattice $\Gamma<G_A$.

    The quotient $M=\Gamma\backslash G_A$ admits invariant structures of types $3$ and $4$, and no invariant structure of extremal type. Both constructions have closed invariant generators, since their values of $\Lambda$ are $\sum_i\lambda_i=0$ and $\mu+\sum_i\lambda_i=0$. This does not exclude non-invariant complex or symplectic structures on $M$.
\end{example}

Since we have classified left-invariant generalized complex structures on almost abelian Lie groups with $A$ diagonal, we pursue a criterion for left-invariant generalized Calabi--Yau structures. We obtain the
corresponding classification:

\begin{cor} \label{cor-CY-diag}
    Let $G_A$ be a $2n$-dimensional almost abelian Lie group with Lie algebra
    $\mathfrak{g}_A= \mathbb{R}e_0 \ltimes_A \mathfrak{h}$, where $A$ is diagonal. Then $G_A$
    admits a left-invariant generalized Calabi--Yau structure of type $k$ if and only if the diagonal
    entries of $A$, listed with multiplicity, can be reordered in one of the following forms:
    \begin{enumerate}[label=(\alph*)]
        \item $\mathrm{spec}(A) = \{\underbrace{\lambda_1, \lambda_1, \dots, \lambda_{k}, \lambda_{k}}_{k \text{ identical pairs}}, \mu, \underbrace{\zeta_1, -\zeta_1, \dots, \zeta_{n-k-1}, -\zeta_{n-k-1}}_{n-k-1 \text{ opposite pairs}} \}$, with $k \le n-1$ and $\sum_{i=1}^k \lambda_i=0$;
        \item $\mathrm{spec}(A) = \{\underbrace{\lambda_1, \lambda_1, \dots, \lambda_{k-1}, \lambda_{k-1}}_{k-1 \text{ identical pairs}}, \mu, \underbrace{\zeta_1, -\zeta_1, \dots, \zeta_{n-k}, -\zeta_{n-k}}_{n-k \text{ opposite pairs}} \}$, with $k \ge 1$ and $\mu + \sum_{i=1}^{k-1} \lambda_i=0$.
    \end{enumerate}
\end{cor}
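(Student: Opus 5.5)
The plan is to handle sufficiency by explicit spinors adapted to the reorderings of Theorem \ref{thm:os-meninos-vem-como}, and necessity by reading closedness as a trace condition on the span of the $\theta_i$. Fix a basis $e_0,e_1,\dots,e_{2n-1}$ with $Ae_j=a_je_j$ and dual basis $e^0,\dots,e^{2n-1}$. Then $\d e^0=0$ and $\d e^j=-a_j\,e^0\wedge e^j$, hence
$$\d(e^a\wedge e^b)=-(a_a+a_b)\,e^0\wedge e^a\wedge e^b,\qquad \d(e^0\wedge e^a)=0 .$$
For an invariant pure spinor $\rho=\exp(\beta)\wedge\Omega$, the computation in the proof of Lemma \ref{lem:integ_equiv} gives $\d\rho=\exp(\beta)\wedge(\d\Omega+\d\beta\wedge\Omega)$. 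Splitting by degree, $\rho$ is closed if and only if
$$\d\Omega=0\quad\text{and}\quad \d\beta\wedge\Omega=0 .$$
So we are asking for the case $X=0$, $\alpha=0$ of Lemma \ref{lem:integ_equiv}.

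\emph{Sufficiency.} I would reuse the structures realizing Theorem \ref{thm:os-meninos-vem-como}, written explicitly.

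In case (a), index the identical pairs by $e_{2i-1},e_{2i}$ with eigenvalue $\lambda_i$. Set $\theta_i=e^{2i-1}+\mathbf i e^{2i}$ and $\Omega=\theta_1\wedge\cdots\wedge\theta_k$. Take $\omega=e^0\wedge e^{\mu}+\sum_j e^{\zeta_j}\wedge e^{-\zeta_j}$ and $B=0$. Then $\omega$ is closed, because opposite pairs contribute $a_a+a_b=0$. Also $\omega^{n-k}\wedge\Omega\wedge\bar\Omega\neq0$. Moreover $\d\theta_i=-\lambda_i e^0\wedge\theta_i$, so
$$\d\Omega=-\Big(\sum_{i=1}^k\lambda_i\Big)e^0\wedge\Omega .$$
Hence $\rho$ is closed exactly when $\sum_{i=1}^k\lambda_i=0$.

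In case (b), use $\theta_0=e^0+\mathbf i e^{\mu}$ together with $\theta_1,\dots,\theta_{k-1}$ as above, and $\omega=\sum_j e^{\zeta_j}\wedge e^{-\zeta_j}$. Since $\d\theta_0=-\mu\,e^0\wedge\theta_0$, we get
$$\d\Omega=-\Big(\mu+\sum_{i=1}^{k-1}\lambda_i\Big)e^0\wedge\Omega .$$
In both cases $\d\beta=0$, so the stated linear conditions are exactly closedness.

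\emph{Necessity.} Let $W=\mathrm{span}_{\mathbb C}\{\theta_1,\dots,\theta_k\}\subset\mathfrak g^*_{\mathbb C}$. Condition (b) of Lemma \ref{lem:integ_equiv} says $\d W\subset W\wedge\mathfrak g^*_{\mathbb C}$. For $\theta=te^0+\eta$ with $\eta\in\mathfrak h^*$, one has $\d\theta=-e^0\wedge A^*\eta$. Using this, I would show two things:
\begin{itemize}
\item either $e^0\in W+\bar W$ (the analogue of form (b)), or $W\subset\mathfrak h^*_{\mathbb C}$ after modifying $\Omega$ within its line (form (a));
\item the image of $W$ in $\mathfrak h^*_{\mathbb C}$, or in $\mathfrak h^*_{\mathbb C}/\langle e^{\mu}\rangle$ in case (b), is $A^*$-invariant.
\end{itemize}
Then $\d\Omega=-\tau\,e^0\wedge\Omega$, where $\tau$ is the trace of $A^*$ on this invariant space, together with $\mu$ in case (b). So closedness is $\tau=0$.

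Since $A$ is diagonal, an $A^*$-invariant subspace is a sum of pieces of eigenspaces. Non-degeneracy forces $W\cap\bar W=0$, so each eigenspace $V_\lambda$ contributes at most $\lfloor\dim V_\lambda/2\rfloor$ to $W$. In this way $\tau$ is a sum of eigenvalues taken once from disjoint identical pairs. These are precisely the $\lambda_i$ of a reordering as in the proof of Theorem \ref{thm:os-meninos-vem-como}, with the remaining eigenvalues paired oppositely via $\d\omega\wedge\Omega=0$ and non-degeneracy.

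The main obstacle is this necessity step. One must make sure that the pairing extracted from an arbitrary closed structure is compatible with the reordering produced by Theorem \ref{thm:os-meninos-vem-como}. Concretely, the identical pairs supplying the trace $\tau$ must be the same ones counted in forms (a) or (b), and the leftover eigenvalues must pair oppositely through the symplectic part $\omega$ on $\mathrm{Ann}(W+\bar W)$. I would first try to run the proof of Theorem \ref{thm:os-meninos-vem-como} verbatim while keeping track of $\alpha$. In that proof, $\alpha$ is forced to be a multiple of $e^0$ whose coefficient is exactly $\tau$, so $\alpha=0$ translates directly into the stated linear condition.
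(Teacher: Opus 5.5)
Your proposal is correct and is essentially the paper's argument. Sufficiency uses the same spinors constructed in the proof of Theorem~\ref{thm:os-meninos-vem-como}, which satisfy $\d\rho=-\Lambda e^0\wedge\rho$. Necessity reruns that proof's case analysis, using Theorem~\ref{thm-Calabi-Yau} to read closedness as $\tr(A^T|_{W_{\mathfrak h}})=0$, where $W_{\mathfrak h}$ is the span of the restrictions $\eta_i$ of the $\theta_i$ to $\mathfrak h$. The identical pairs extracted there (together with $\mu$, carried by the real eigenvector, in the second case) are exactly the eigenvalues of $A^T$ on $W_{\mathfrak h}$.

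One slip: the alternative ``$W\subset\mathfrak h^*_{\mathbb C}$ after modifying $\Omega$ within its line'' is false in general. The line fixes $W=\mathrm{span}\{\theta_i\}$; take $\Omega=e^0+e^1+\mathbf i e^2$. It is not needed, because only $W_{\mathfrak h}$ enters. Your case split is still the right one, since $\dim_{\mathbb C}(W_{\mathfrak h}\cap\overline{W_{\mathfrak h}})=\dim_{\mathbb C}\big(\mathbb C e^0\cap(W+\overline W)\big)\in\{0,1\}$.
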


\begin{cor}
    Let $M=\Gamma\backslash G_A$ be a $2n$-dimensional almost abelian solvmanifold with $A$ diagonal.
    Then $M$ admits an invariant generalized Calabi--Yau structure of type $k$ if and only if
    $\mathrm{spec}(A)$ is as in Corollary \ref{cor-CY-diag}. In this case, $\mathrm{tr}A=0$ and thus $\mu=0$.
\end{cor}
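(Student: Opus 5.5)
The plan is to reduce the statement to Corollary \ref{cor-CY-diag} through the correspondence between invariant objects on $M=\Gamma\backslash G_A$ and left-invariant objects on $G_A$, exactly as Corollary \ref{cor:solv-version} was obtained from Theorem \ref{thm:os-meninos-vem-como}. The trace identity then follows by elementary bookkeeping with the lattice condition $\tr A=0$.

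\textbf{Step 1: the correspondence.} By the definition adopted in the paper, invariant forms on $\Gamma\backslash G_A$ are precisely those descending from left-invariant forms on $G_A$. The projection $\pi\colon G_A\to\Gamma\backslash G_A$ is a local diffeomorphism. Hence $\pi^*$ commutes with $\d$, with wedge products and with the Clifford action of invariant sections. It follows that an invariant pure spinor on $M$ is closed, pure and non-degenerate if and only if its lift to $G_A$ has these properties. The lift also generates the canonical line of the lifted left-invariant generalized complex structure, which has the same type $k$. Conversely, by left-invariance, a left-invariant generalized complex structure on $G_A$ together with a closed left-invariant generator descends to $M$. Therefore $M$ carries an invariant generalized Calabi--Yau structure of type $k$ if and only if $G_A$ does. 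By Corollary \ref{cor-CY-diag}, this holds if and only if $\mathrm{spec}(A)$ has one of the two forms listed there.

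\textbf{Step 2: the lattice forces $\mu=0$.} Since $\Gamma$ is a lattice, $G_A$ is unimodular, so $\tr A=0$. The opposite pairs contribute $0$ to the trace. In case (a) this gives $0=\tr A=2\sum_{i=1}^k\lambda_i+\mu$, and the Calabi--Yau condition $\sum_{i=1}^k\lambda_i=0$ yields $\mu=0$. In case (b) we get $0=2\sum_{i=1}^{k-1}\lambda_i+\mu$, while the Calabi--Yau condition gives $\sum_{i=1}^{k-1}\lambda_i=-\mu$. Substituting, $0=-2\mu+\mu=-\mu$, so again $\mu=0$.

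\textbf{Main obstacle.} The only point needing care is Step 1: one must check that the invariant notion on $M$ matches the left-invariant notion on $G_A$ in both directions. This includes checking that closedness of the generator is preserved, which holds because $\pi$ is a local diffeomorphism and $\d$ is natural. Everything else is a direct consequence of Corollary \ref{cor-CY-diag} and the trace computation above.
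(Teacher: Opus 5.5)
Your proposal is correct and follows the route the paper intends. The invariant/left-invariant correspondence reduces the equivalence to Corollary \ref{cor-CY-diag}, and unimodularity ($\tr A=0$), combined with the Calabi--Yau linear condition on the same reordering, gives $\mu=0$ in both forms, exactly as you compute (the paper states this corollary without a written proof, treating it as immediate in the same way as Corollary \ref{cor:solv-version}).
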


\medskip

We now turn to the proof of the above results, done via explicit computations using
pure spinors. We fix, herein, the following notation: $G_A$ stands for an almost abelian
Lie group of dimension $2n$ with associated Lie algebra
$\mathfrak{g}_A = \mathbb{R} e_0 \ltimes_A \mathfrak{h}$, where $\mathfrak{h} \cong \mathbb{R}^{2n-1}$
is an abelian ideal. We fix a basis $\{e_1, \dots, e_{2n-1}\}$ for $\mathfrak{h}$ and denote the dual
basis of $\mathfrak{g}_A^*$ by $\{e^0, e^1, \dots, e^{2n-1}\}$. In this basis, we write
$A=(a_{ij}) \in M(2n-1,\mathbb R)$ for the matrix of $\ad_{e_0}|_{\mathfrak h}$, so that the Lie bracket
of $\mathfrak{g}_A$ is determined by $[e_0, e_i] = \sum_{j=1}^{2n-1} a_{ji} e_j$.

We identify the left-invariant forms on $G_A$ with $\bigwedge^\bullet\mathfrak{g}_A^*$, and
we denote by $\d$ the restriction of the de Rham differential to them, which is determined by the
Maurer--Cartan equations
$$\d\xi(X,Y) = -\xi([X,Y]),\qquad \xi \in \mathfrak{g}_A^*,\ X,Y \in \mathfrak{g}_A .$$
Since $[\mathfrak g_A,\mathfrak g_A]\subseteq\mathfrak h=\ker e^0$ and $\mathfrak h$ is abelian, a
direct computation gives
$$\d e^0 = 0\text{ and }\d e^p = -e^0 \wedge A^T e^p\text{ for }p \in \{1, \dots, 2n-1\},$$
where $A^T\colon\mathfrak h^*\to\mathfrak h^*$ is the dual map of $\ad_{e_0}|_{\mathfrak h}$, whose
matrix in the dual basis is the transpose of $A$, that is, $A^Te^p=\sum_i a_{pi}e^i$. We remark that
$A^T=-\ad^*_{e_0}|_{\mathfrak h^*}$, with $\ad^*$ as fixed above.

\begin{defin}\label{def:Psi}
    Let $\Psi$ be the unique degree-preserving derivation of
    $\bigwedge^\bullet \mathfrak{g}_A^*\otimes_{\mathbb R}\mathbb C$ such that $\Psi(e^0)=0$ and
    $\Psi|_{\mathfrak h^*}=A^T$. Explicitly, for a decomposable $p$-form
    $\alpha = \eta_1 \wedge \cdots \wedge \eta_p$,
    $$\Psi(\eta_1 \wedge \cdots \wedge \eta_p) = \sum_{j=1}^p \eta_1 \wedge \cdots \wedge
    (\Psi\eta_j) \wedge \cdots \wedge \eta_p .$$
    In particular $\Psi\left(e^0\wedge\gamma\right)=e^0\wedge\Psi(\gamma)$ and $\Psi$ preserves
    $\bigwedge^\bullet \mathfrak{h}^*\otimes_{\mathbb R}\mathbb C$.
\end{defin}

\begin{prop}\label{prop:ext-derivative}
      The differential $\d$ defined above is given by $\d\alpha = -e^0\wedge\Psi(\alpha)$, for every
    $\alpha \in \bigwedge^\bullet \mathfrak{g}_A^*\otimes_{\mathbb R}\mathbb C$. In particular,
    $\d\left(e^0\wedge\gamma\right)=0$ for every
    $\gamma \in \bigwedge^\bullet \mathfrak{g}_A^*\otimes_{\mathbb R}\mathbb C$.
\end{prop}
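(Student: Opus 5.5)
Both $\d$ and $\alpha\mapsto -e^0\wedge\Psi(\alpha)$ are determined by what they do on generators, so the plan is to check that they agree on $1$-forms and behave the same way under wedge products.

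First, the generators. On $\mathfrak g_A^*\otimes_{\mathbb R}\mathbb C$ the Maurer--Cartan computation above gives $\d e^0=0=-e^0\wedge\Psi(e^0)$, because $\Psi(e^0)=0$. It also gives $\d e^p=-e^0\wedge A^Te^p=-e^0\wedge\Psi(e^p)$ for $p\ge1$, because $\Psi|_{\mathfrak h^*}=A^T$. In degree $0$ both operators vanish, since $\d$ kills constants and $\Psi$, as a derivation, kills scalars. Both sides are $\mathbb C$-linear, so they agree on all forms of degree at most one.

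Second, the wedge rule. I will show that $D(\alpha):=-e^0\wedge\Psi(\alpha)$ is an antiderivation of degree $+1$, exactly like $\d$. For $\alpha$ homogeneous of degree $p$,
\[
D(\alpha\wedge\beta)=-e^0\wedge\Psi(\alpha)\wedge\beta-e^0\wedge\alpha\wedge\Psi(\beta)=D(\alpha)\wedge\beta+(-1)^p\alpha\wedge D(\beta).
\]
The only point to watch is the sign when $e^0$ is moved past $\alpha$ in the second term. Since $\Psi$ preserves degree, $e^0\wedge\alpha=(-1)^p\alpha\wedge e^0$. This sign bookkeeping is the only real obstacle, and it is mild. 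Two antiderivations of $\bigwedge^\bullet\mathfrak g_A^*\otimes\mathbb C$ that agree on $\mathfrak g_A^*$ agree on decomposable monomials, by induction on degree. By linearity they then agree everywhere, so $\d=D$.

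Third, the closedness claim. Take any $\gamma$. Then
\[
\Psi(e^0\wedge\gamma)=\Psi(e^0)\wedge\gamma+e^0\wedge\Psi(\gamma)=e^0\wedge\Psi(\gamma),
\]
so $\d(e^0\wedge\gamma)=-e^0\wedge e^0\wedge\Psi(\gamma)=0$, because $e^0\wedge e^0=0$.
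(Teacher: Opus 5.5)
Your proof is correct and follows the paper's argument: you show that $D\alpha=-e^0\wedge\Psi(\alpha)$ is a degree-one antiderivation, check that it agrees with $\d$ on scalars and on the basis one-forms, conclude $\d=D$, and then use $\Psi(e^0\wedge\gamma)=e^0\wedge\Psi(\gamma)$ together with $e^0\wedge e^0=0$. One small correction: the sign $e^0\wedge\alpha=(-1)^p\alpha\wedge e^0$ comes only from $\deg\alpha=p$, not from $\Psi$ preserving degree.
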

\begin{proof}
Put $D\alpha=-e^0\wedge\Psi\alpha$. If $\deg\alpha=p$, then
\[
\begin{aligned}
D(\alpha\wedge\beta)
&=-e^0\wedge\Psi\alpha\wedge\beta-e^0\wedge\alpha\wedge\Psi\beta\\
&=D\alpha\wedge\beta+(-1)^p\alpha\wedge D\beta.
\end{aligned}
\]
Hence $D$ is a degree-one derivation. It agrees with $\d$ on
constants and basis one-forms, so agrees everywhere.
Finally $\Psi(e^0\wedge\gamma)=e^0\wedge\Psi\gamma$ gives
$\d(e^0\wedge\gamma)=0$.
\end{proof}
Finally, notice that when $A=\mathrm{diag}(\lambda_1,\ldots,\lambda_{2n-1})$ is diagonal, so is
$A^T=A$, and $\Psi$ acts diagonally on the induced basis of
$\bigwedge^\bullet \mathfrak{h}^*$:
$$\Psi\left(e^{i_1}\wedge\cdots\wedge e^{i_p}\right)
=\left(\lambda_{i_1}+\cdots+\lambda_{i_p}\right)e^{i_1}\wedge\cdots\wedge e^{i_p},$$
so that $\d\left(e^{i_1}\wedge\cdots\wedge e^{i_p}\right)
=-\left(\lambda_{i_1}+\cdots+\lambda_{i_p}\right)e^0\wedge e^{i_1}\wedge\cdots\wedge e^{i_p}$. This is
what makes the computations below tractable.

\medskip

For $k=0$, normalize $\Omega=1$ and set $W=0$, $\Omega_{ab}=1$, and $\Omega_0=0$. Integrability then means $\d\beta=0$, equivalently $\Psi(\beta_{ab})=0$, and the invariant spinor is closed. Arguments involving $\bigwedge^{k-1}W$ below are for $k\ge1$.

For $k\ge1$, consider a non-degenerate pure spinor
$\rho=\exp(\beta)\wedge\Omega$, where $\beta:=B+\mathbf i \omega$ with $B, \omega \in \bigwedge^2\mathfrak{g}^*_A$ real, and
$\Omega=\theta_1 \wedge\cdots\wedge\theta_k$
is decomposable, with $\theta_1,\ldots,\theta_k \in \mathfrak{g}^*_A\otimes_{\mathbb R}\mathbb{C}$
linearly independent. Writing $\theta_i=c_ie^0+\eta_i$, with $c_i \in \mathbb{C}$ and
$\eta_i\in \mathfrak{h}^* \otimes_{\mathbb R}\mathbb{C}$, and using that
$\bigwedge^\bullet\mathfrak g_A^*=\bigwedge^\bullet\mathfrak h^*\oplus\,e^0\wedge
\bigwedge^\bullet\mathfrak h^*$, we can uniquely decompose $\Omega$ as
$$\Omega=e^0\wedge\Omega_0+\Omega_{ab}.$$
Indeed, expanding the product $\theta_1\wedge\cdots\wedge\theta_k$ and setting
$\Omega_{\hat\jmath}:=\eta_1\wedge\cdots\wedge\widehat{\eta_j}\wedge\cdots\wedge\eta_k$, one gets
\begin{equation}\label{eq:OmegaabandOmega0}\Omega_{ab}=\eta_1 \wedge\cdots\wedge\eta_k\in\textstyle\bigwedge^{k} \mathfrak{h}^*
\otimes_{\mathbb R}\mathbb{C},\qquad
\Omega_{0}=\sum_{j=1}^{k}(-1)^{j-1}c_j\,\Omega_{\hat\jmath}\in\textstyle\bigwedge^{k-1} \mathfrak{h}^*
\otimes_{\mathbb R}\mathbb{C}.\end{equation}
In particular, the purely abelian component $\Omega_{ab}$ is again decomposable, and both components
are built out of the $\eta_j$ alone. Moreover, $\Omega_{ab}\neq0$: otherwise $\Omega=e^0\wedge\Omega_0$
and hence $\Omega\wedge\bar\Omega=0$, since $e^0$ is real, contradicting the non-degeneracy of $\rho$.
Setting $W:=\mathrm{span}_{\mathbb{C}}\{\eta_1, \dots, \eta_k\}$, this says that
$\eta_1,\ldots,\eta_k$ are linearly independent, so that $\dim_{\mathbb C}W=k$, the forms
$\Omega_{\hat 1},\ldots,\Omega_{\hat k}$ are a basis of $\bigwedge^{k-1}W$, thus
$\Omega_0\in\bigwedge^{k-1}W$, and $\Omega_{ab}$ spans the line $\bigwedge^{k}W$. Analogously, we write
$$\beta=\beta_{ab}+e^0\wedge\beta_0,\qquad \omega=\omega_{ab}+e^0\wedge\omega_0,$$
with $\beta_{ab},\omega_{ab}\in\bigwedge^2\mathfrak h^*$ and
$\beta_0,\omega_0\in\bigwedge^1\mathfrak h^*$, up to complexification in the case of $\beta$. This
yields the following.

\begin{lemma} \label{lemma-integ-equiv_new}
   A non-degenerate pure spinor $\rho = \exp(\beta) \wedge \Omega$ defines a left-invariant generalized complex structure on $G_A$ if and only if its components $\beta_{ab}$ and $\Omega_{ab}$, together with the subspace $W = \mathrm{span}_{\mathbb{C}}\{\eta_1, \dots, \eta_k\}$ associated to \eqref{eq:OmegaabandOmega0}, satisfy
    \begin{enumerate}
        \item[(a)] $W$ is invariant under $A^T$;
        \item[(b)] $\Psi(\beta_{ab}) \wedge \Omega_{ab} = 0$.
    \end{enumerate}
    In this case, we additionally have $\Psi(\Omega_{ab}) = \tr\left(A^T|_W\right)\Omega_{ab}$.
\end{lemma}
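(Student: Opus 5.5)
The plan is to specialize Lemma \ref{lem:integ_equiv} to the left-invariant setting, using Proposition \ref{prop:ext-derivative} to compute every differential. Since $\d(e^0\wedge\gamma)=0$, we get
\[
\d\Omega=\d\Omega_{ab}=-e^0\wedge\Psi(\Omega_{ab}),\qquad \d\beta=\d\beta_{ab}=-e^0\wedge\Psi(\beta_{ab}).
\]
Condition (c) of Lemma \ref{lem:integ_equiv} reads $e^0\wedge\Psi(\beta_{ab})\wedge(e^0\wedge\Omega_0+\Omega_{ab})=0$. The $e^0\wedge\Omega_0$ term dies because $e^0\wedge e^0=0$, so (c) becomes $e^0\wedge\Psi(\beta_{ab})\wedge\Omega_{ab}=0$. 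The form $\Psi(\beta_{ab})\wedge\Omega_{ab}$ lies in $\bigwedge^\bullet\mathfrak h^*\otimes\mathbb C$, so wedging with $e^0$ is injective on it. Hence (c) is equivalent to item (b) of the statement.

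Next I would handle conditions (a) and (b) of Lemma \ref{lem:integ_equiv}. Condition (a) can always be met by taking $X=0$. Then $\alpha=\xi$ is an arbitrary complex covector. So these two conditions together amount to the existence of some $\alpha\in\mathfrak g_A^*\otimes\mathbb C$ with
\[
-e^0\wedge\Psi(\Omega_{ab})=\alpha\wedge\Omega .
\]
Conversely, any admissible $X+\xi$ produces such an $\alpha$, so nothing is lost by this reduction. It remains to show that this solvability condition is equivalent to $A^T$-invariance of $W$.

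\emph{Invariance implies solvability.} If $W$ is $A^T$-invariant, choose a basis of $W$ in which $A^T|_W$ is upper triangular. Expanding the derivation $\Psi$ on the wedge of this basis gives
\[
\Psi(\Omega_{ab})=\tr(A^T|_W)\,\Omega_{ab},
\]
because the strictly triangular contributions repeat a factor and vanish. This is also the final claim of the lemma. Setting $t:=\tr(A^T|_W)$ and $\alpha:=t\,e^0$, we get $\alpha\wedge\Omega=t\,e^0\wedge\Omega_{ab}$. This equals $-e^0\wedge\Psi(\Omega_{ab})$ up to the sign of $\alpha$, which can be chosen freely, so the equation is solved.

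\emph{Solvability implies invariance.} I expect this direction to be the main obstacle. Write $\alpha=a e^0+\alpha_h$ with $\alpha_h\in\mathfrak h^*\otimes\mathbb C$, and separate the equation into its $\mathfrak h$-part and its $e^0$-part.
\begin{itemize}
\item[(1)] The part without $e^0$ gives $\alpha_h\wedge\Omega_{ab}=0$. Since $\Omega_{ab}$ is decomposable and spans $\bigwedge^kW$, this forces $\alpha_h\in W$.
\item[(2)] The $e^0$-part gives $-\Psi(\Omega_{ab})=a\,\Omega_{ab}-\alpha_h\wedge\Omega_0$.
\item[(3)] Since $\alpha_h\in W$ and $\Omega_0\in\bigwedge^{k-1}W$, the product $\alpha_h\wedge\Omega_0$ lies in $\bigwedge^kW$. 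Hence $\Psi(\Omega_{ab})=c\,\Omega_{ab}$ for some scalar $c$.
\end{itemize}
Now take any $w\in W$, so that $w\wedge\Omega_{ab}=0$. Applying the derivation $\Psi$ gives
\[
\Psi(w)\wedge\Omega_{ab}+w\wedge c\,\Omega_{ab}=0,
\]
and therefore $A^Tw\wedge\Omega_{ab}=0$. This means $A^Tw\in W$. So $W$ is $A^T$-invariant, which completes the proof.
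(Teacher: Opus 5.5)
Your proposal is correct, and its outline matches the paper's proof. Both arguments reduce integrability through Lemma \ref{lem:integ_equiv} and Proposition \ref{prop:ext-derivative}. Both split $\alpha=a e^0+\alpha_h$ to get $\alpha_h\in W$ and hence $\Psi(\Omega_{ab})\in\bigwedge^kW$. Both use $\alpha=-\tr(A^T|_W)\,e^0$ for the converse; write this choice of $\alpha$ directly rather than saying it works ``up to sign''.

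The genuine difference is how you pass from $\Psi(\Omega_{ab})=c\,\Omega_{ab}$ to $A^T$-invariance of $W$.
\begin{itemize}
\item[] \emph{The paper's route.} It expands $\Psi(\eta_1\wedge\cdots\wedge\eta_k)$, chooses a complement $U$ with $\mathfrak h^*\otimes\mathbb C=W\oplus U$, and splits $A^T\eta_j=v_j+u_j$. It then kills the $u_j$ using $\bigwedge^kW\cap\bigl(\bigwedge^{k-1}W\wedge U\bigr)=\{0\}$ and the identification $\bigwedge^{k-1}W\wedge U\cong\bigwedge^{k-1}W\otimes U$.
\item[] \emph{Your route.} You apply the degree-zero derivation $\Psi$ to the identity $w\wedge\Omega_{ab}=0$ for $w\in W$. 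This gives $A^Tw\wedge\Omega_{ab}=-c\,w\wedge\Omega_{ab}=0$, so $A^Tw\in W$ by the annihilator characterization of a nonzero decomposable form.
\end{itemize}
Your argument is shorter and needs no basis or complement. It also makes clear that invariance of $W$ is equivalent to $\Omega_{ab}$ being a $\Psi$-eigenvector. The paper's computation is more hands-on, but it sets up the explicit expansion that it then reuses for the trace formula.

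For the trace itself, your triangularization is unnecessary. Once $W$ is invariant, expand in the original basis with $A^T\eta_j=\sum_i C_{ij}\eta_i$. Every off-diagonal term then vanishes by repetition of a factor, as in the paper.
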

\begin{proof}
    Assume first that $\rho$ defines a left-invariant generalized complex structure. By Lemma \ref{lem:integ_equiv}, integrability requires $\d\Omega = \alpha \wedge \Omega$ for the complex $1$-form $\alpha=\xi+\iota_X\beta$, and also $\d\beta \wedge \Omega = 0$. 
    
    On the other hand, by Proposition \ref{prop:ext-derivative} we have $\d(e^0\wedge\Omega_0)=0$, so that $\d\Omega = -e^0 \wedge \Psi(\Omega_{ab})$. Equating both expressions for $\d\Omega$ yields
    \begin{equation}\label{eq:auxiliary}
        -e^0 \wedge \Psi(\Omega_{ab}) = \alpha \wedge (e^0 \wedge \Omega_0 + \Omega_{ab}).
    \end{equation}
    Wedging this equation with $e^0$ on the left leaves
    $$0 = e^0 \wedge \alpha \wedge \Omega_{ab}.$$
    Write $\alpha = a_0 e^0 + \gamma$, with $a_0 \in \mathbb{C}$ and $\gamma \in \mathfrak{h}^* \otimes_{\mathbb R}\mathbb{C}$. As $e^0\wedge(\cdot)$ is injective on $\bigwedge^\bullet\mathfrak h^*\otimes_{\mathbb R}\mathbb{C}$, the above reads $\gamma\wedge\Omega_{ab}=0$ which, $\Omega_{ab}$ being decomposable and non-vanishing, forces $\gamma \in W$.

    Substituting $\alpha$ back into Equation \eqref{eq:auxiliary} gives
    $$-e^0 \wedge \Psi(\Omega_{ab}) = a_0 e^0 \wedge \Omega_{ab} - e^0 \wedge \gamma \wedge \Omega_0 + \gamma \wedge \Omega_{ab}.$$
    Since $\gamma \in W$, we have $\gamma \wedge \Omega_{ab} = 0$, and matching the $e^0$ components on both sides yields
    $$\Psi(\Omega_{ab}) = -a_0 \Omega_{ab} + \gamma \wedge \Omega_0.$$
    Because $\gamma \in W$ and $\Omega_0 \in \bigwedge^{k-1}W$, their wedge product lies in $\bigwedge^k W$, which is the line spanned by $\Omega_{ab}$. Consequently, $\Psi(\Omega_{ab}) = \Lambda \Omega_{ab}$ for some $\Lambda \in \mathbb{C}$. Expanding via the derivation property,
    $$\sum_{j=1}^k \eta_1 \wedge \cdots \wedge A^T \eta_j \wedge \cdots \wedge \eta_k = \Lambda\,\eta_1\wedge\cdots\wedge\eta_k.$$

    Let $U$ be a complementary subspace, so that $\mathfrak{h}^* \otimes_{\mathbb R} \mathbb{C} = W \oplus U$, and decompose $A^T \eta_j = v_j + u_j$ with $v_j \in W$ and $u_j \in U$. Substituting this into the sum above, both the right-hand side and all the terms involving some $v_j$ lie in $\bigwedge^k W$, whereas
    $$\sum_{j=1}^k \eta_1 \wedge\cdots\wedge u_j \wedge\cdots\wedge \eta_k = \sum_{j=1}^k (-1)^{k-j}\,\Omega_{\hat\jmath}\wedge u_j \in \textstyle\bigwedge^{k-1}W\wedge U.$$
    Since $\bigwedge^k W\cap \left(\bigwedge^{k-1}W\wedge U\right)=\{0\}$, this last sum must vanish. But $\bigwedge^{k-1}W\wedge U\cong \bigwedge^{k-1}W\otimes U$ and $\Omega_{\hat 1},\ldots,\Omega_{\hat k}$ is a basis of $\bigwedge^{k-1}W$, so that $u_j = 0$ for every $j$. Thus $A^T \eta_j = v_j \in W$ for every $j$, proving that $W$ is invariant under $A^T$. 
    
    This invariance allows us to write $A^T\eta_j = \sum_{i=1}^k C_{ij} \eta_i$ for $C = A^T|_W$. Substituting this into the expansion of $\Psi(\Omega_{ab})$ yields
    $$ \Psi(\Omega_{ab}) = \sum_{j=1}^k \eta_1 \wedge \cdots \wedge \left(\sum_{i=1}^k C_{ij}\eta_i\right) \wedge \cdots \wedge \eta_k = \left(\sum_{j=1}^k C_{jj}\right) \Omega_{ab} = \tr\left(A^T|_W\right)\Omega_{ab}, $$
    since all terms with $i \neq j$ vanish by repetition. 
    
    Furthermore, the integrability condition $\d\beta \wedge \Omega = 0$ together with $\d\beta = -e^0 \wedge \Psi(\beta_{ab})$ implies
    $$ 0 = -e^0 \wedge \Psi(\beta_{ab}) \wedge (e^0 \wedge \Omega_0 + \Omega_{ab}) = -e^0 \wedge (\Psi(\beta_{ab}) \wedge \Omega_{ab}). $$
    By the injectivity of $e^0 \wedge (\cdot)$, this forces $\Psi(\beta_{ab}) \wedge \Omega_{ab} = 0$, giving item (b).

    Conversely, assume (a) and (b) hold. Because of (a), the calculation $\Psi(\Omega_{ab}) = \tr\left(A^T|_W\right)\Omega_{ab}$ performed above remains valid. We verify the integrability conditions of Lemma \ref{lem:integ_equiv} by choosing $X = 0$ and $\xi = -\tr\left(A^T|_W\right)e^0$. Condition (a) of Lemma \ref{lem:integ_equiv} is trivially satisfied since $\iota_X\Omega = 0$. For condition (b), since $\d(e^0\wedge\Omega_0)=0$, we have
    $$ \d\Omega = -e^0 \wedge \Psi(\Omega_{ab}) = -\tr\left(A^T|_W\right)e^0 \wedge \Omega_{ab}. $$
    Since $e^0 \wedge e^0 = 0$, the right-hand side is equal to $-\tr\left(A^T|_W\right)e^0 \wedge (e^0 \wedge \Omega_0 + \Omega_{ab}) = \xi \wedge \Omega$, satisfying the condition as $\xi + \iota_X\beta = \xi$. Finally, item (b) gives $\d\beta \wedge \Omega = -e^0 \wedge (\Psi(\beta_{ab}) \wedge \Omega_{ab}) = 0$, satisfying condition (c). 
\end{proof}

\medskip

We finally prove Theorem \ref{thm:os-meninos-vem-como}:

\medskip

\begin{proof}[Proof of Theorem \ref{thm:os-meninos-vem-como}]
    First, let us analyze the case when, for $k \le n-1$,
    $$\mathrm{spec}(A)=\{\lambda_1, \lambda_1, \ldots, \lambda_k, \lambda_k, \mu, \zeta_1, -\zeta_1,
    \ldots, \zeta_{n-k-1}, -\zeta_{n-k-1}\}.$$

    Without loss of generality, we assign the basis elements such that:
    \begin{enumerate}[label=$\cdot$]
        \item $\d e^j = -\lambda_{\frac{j+1}{2}} e^0 \wedge e^j$ for $j \le 2k$ odd,
        \item $\d e^j = -\lambda_{\frac{j}{2}} e^0 \wedge e^j$ for $j \le 2k$ even,
        \item $\d e^{2k+1} = -\mu e^0 \wedge e^{2k+1}$,
        \item $\d e^j = -\zeta_{\frac{j-2k}{2}} e^0 \wedge e^j$ for $2k+2 \le j \le 2n-1$ even,
        \item $\d e^j = \zeta_{\frac{j-2k-1}{2}} e^0 \wedge e^j$ for $2k+2 \le j \le 2n-1$ odd.
    \end{enumerate}
    In other words, $e^{2k+2m}$ and $e^{2k+2m+1}$ carry the opposite eigenvalues $\zeta_m$ and
    $-\zeta_m$, for $m=1, \ldots, n-k-1$.

    Define the pure spinor $\rho = \exp({\mathbf i\omega}) \wedge \Omega$, where the complex $k$-form is
    given by $\Omega = \theta_1 \wedge \cdots \wedge \theta_k$, with
    $\theta_j = e^{2j-1} + \mathbf ie^{2j}$. Calculating the derivative of each complex generator:
    $$\d\theta_j = -\lambda_j e^0 \wedge \theta_j.$$
       Setting $\Lambda = \sum_{j=1}^{k} \lambda_j = \tr\left(A^T|_W\right)$, in accordance with Lemma
    \ref{lemma-integ-equiv_new}, the Leibniz rule gives
\[
\d\Omega=\sum_{j=1}^k(-1)^{j-1}
\theta_1\wedge\cdots\wedge\d\theta_j\wedge\cdots\wedge\theta_k
=-\left(\sum_{j=1}^k\lambda_j\right)e^0\wedge\Omega
=-\Lambda e^0\wedge\Omega.
\]
For $k=0$, the identity holds with $\Omega=1$ and $\Lambda=0$.

    Define the closed Darboux $2$-form on the complementary subspace:
    $$\omega = e^0 \wedge e^{2k+1} + \sum_{m=1}^{n-k-1} e^{2k+2m} \wedge e^{2k+2m+1}.$$ Note that $\d\omega = 0$: the first summand is closed because $\d e^0 = 0$ and
     $e^0 \wedge e^0 = 0$, whatever $\mu$ is, and the remaining ones because of the exact cancellation
    of the opposite eigenpairs.

    Calculating the total derivative of the spinor $\rho$, and using that $\exp(\mathbf i\omega)$ has
    even degree:
    $$\d\rho  = \mathbf i \exp(\mathbf i\omega) \wedge \d\omega \wedge \Omega - \Lambda e^0 \wedge
    \exp(\mathbf i\omega) \wedge \Omega = -\Lambda e^0 \wedge \rho.$$

    By selecting $X+\xi = 0 - \Lambda e^0 \in (\mathfrak{g}_A \oplus \mathfrak{g}_A^*)
    \otimes_{\mathbb R}\mathbb C$, we obtain the Clifford action
    $(X+\xi) \cdot \rho = \d\rho$, verifying integrability. Non-degeneracy is also satisfied
    since $\Omega \wedge \bar{\Omega}$ is a nonzero multiple of $e^1 \wedge \cdots \wedge e^{2k}$ while
    $\omega^{n-k}=(n-k)!\,e^0 \wedge e^{2k+1} \wedge \cdots \wedge e^{2n-1}$, so that
    $\omega^{n-k} \wedge \Omega \wedge \bar{\Omega} \neq 0$.

    \medskip

    Now, let us analyze the case when, for $k \ge 1$,
    $$\mathrm{spec}(A)=\{\lambda_1, \lambda_1, \ldots, \lambda_{k-1}, \lambda_{k-1}, \mu, \zeta_1,
    -\zeta_1, \ldots, \zeta_{n-k}, -\zeta_{n-k}\}.$$

    We use the analogous basis assignment: $e^{2j-1}$ and $e^{2j}$ carry the eigenvalue $\lambda_j$ for
    $j=1, \ldots, k-1$, the covector $e^{2k-1}$ carries $\mu$, and $e^{2k+2m-2}$, $e^{2k+2m-1}$ carry
    the opposite eigenvalues $\zeta_m$, $-\zeta_m$ for $m=1, \ldots, n-k$. Define the pure spinor
    $\rho = \exp(\mathbf i\omega) \wedge \Omega$, where the complex form of degree $k$ is given by
    $\Omega = \theta_1 \wedge \cdots \wedge \theta_k$, with $\theta_j = e^{2j-1} + \mathbf ie^{2j}$ for
    $j=1, \ldots, k-1$, and $\theta_k=e^0+\mathbf ie^{2k-1}$. In this case:
    $$\d\theta_j = -\lambda_j e^0 \wedge \theta_j\text{, }\forall j=1, \ldots, k-1;$$
    $$\d\theta_k=-\mu e^0 \wedge \theta_k,$$
    the latter because $\d e^0=0$ and $e^0 \wedge e^0 = 0$. Applying the Leibniz rule to $\Omega$, and
    defining $\Lambda := \mu + \sum_{j=1}^{k-1} \lambda_j$, we get:
    $$\d\Omega = -\Lambda e^0 \wedge \Omega.$$

    Define also the following closed Darboux $2$-form:
    $$\omega = \sum_{m=1}^{n-k} e^{2k+2m-2} \wedge e^{2k+2m-1}.$$ We get the following expression for the derivative of the pure spinor:
    $$\d\rho = -\Lambda e^0 \wedge \rho.$$

    By selecting $X+\xi = 0-\Lambda e^0$, we obtain $\d\rho =(X+\xi) \cdot \rho$, proving
    integrability. Non-degeneracy is also satisfied since $\Omega \wedge \bar{\Omega}$ is a nonzero
    multiple of $e^0 \wedge e^1 \wedge \cdots \wedge e^{2k-1}$ and
    $\omega^{n-k}=(n-k)!\,e^{2k} \wedge \cdots \wedge e^{2n-1}$, so that
    $\omega^{n-k} \wedge \Omega \wedge \bar{\Omega} \neq 0$.

    \medskip

       Conversely, take $A=\mathrm{diag}(\lambda_1, \lambda_2, \ldots, \lambda_{2n-1})$ and assume $G_A$
    admits a left-invariant generalized complex structure given by a pure spinor
    $\rho = \exp(\beta) \wedge \Omega$, where $\beta = B+\mathbf i\omega$, with $B$ and $\omega$ real
    $2$-forms, and $\Omega = \theta_1 \wedge \cdots \wedge \theta_k$, with $\theta_i=c_ie^0 + \eta_i$
    complex $1$-forms. Moreover, $\omega^{n-k} \wedge \Omega \wedge \overline{\Omega} \neq 0$ and, by
    Lemma \ref{lem:integ_equiv}, $\d\beta \wedge \Omega =0$.

    Let us first put both conditions in a convenient form. Lemma \ref{lemma-integ-equiv_new} implies the following. First, if one writes
    $\beta = \beta_{ab} + e^0 \wedge \beta_0$ and $\omega = \omega_{ab} + e^0 \wedge \omega_0$ with respect to the decomposition $\bigwedge^\bullet\mathfrak g_A^* = \bigwedge^\bullet \mathfrak h^* \oplus\, e^0
    \wedge \bigwedge^\bullet \mathfrak h^*$, then
    \begin{equation}\label{eq:integ-Psi}
    \Psi(\beta_{ab})\wedge\Omega_{ab}=0. 
    \end{equation}
   Moreover, $W=\mathrm{span}_{\mathbb C}\{\eta_1, \ldots, \eta_k\}$ is invariant
    under $A^T$. 
    
    Since $A^T$ is diagonal, $A^T|_W$ is diagonalizable and its eigenvalues are among the
    eigenvalues of $A$; let $v_i = x_i+\mathbf i y_i$ be an eigenbasis of $A^T|_W$, chosen adapted to
    the invariant subspace $W \cap \overline{W}$, with $A^Tv_i=\lambda_{j_i}v_i$ and hence, $A^T$ and
    $\lambda_{j_i}$ being real, $A^Tx_i=\lambda_{j_i}x_i$ and $A^Ty_i=\lambda_{j_i}y_i$. As
    $W + \overline{W}$ is stable under conjugation, there is a real subspace $V \subset \mathfrak{h}^*$
    with $V\otimes_{\mathbb R}\mathbb C = W + \overline{W}$, spanned by the $x_i$ and the $y_i$ and
    hence $A^T$-invariant. Since $V$ is spanned by eigenvectors, by exchange inside each eigenspace
    $E_\lambda=\mathrm{span}\{e^p : \lambda_p=\lambda\}$ we may complete a basis of $V$ to a real
    eigenbasis $\{f^1, \ldots, f^{2n-1}\}$ of $\mathfrak{h}^*$, and we let $V'$ be spanned by the
    remaining ones, so that $\mathfrak{h}^*=V\oplus V'$. We index the first family by $J$ and the
    second by $I$, and work in this basis from now on, which is legitimate because
    \eqref{eq:integ-Psi} holds in any real eigenbasis.

        Everything now follows from two observations. Writing
        $\bigwedge^\bullet\left(\mathfrak h^*\otimes_{\mathbb R}\mathbb C\right)=\bigoplus_{a,b}
       \bigwedge^a\left(V\otimes_{\mathbb R}\mathbb C\right)\wedge
        \bigwedge^b\left(V'\otimes_{\mathbb R}\mathbb C\right)$, we decompose
            $\beta_{ab}=\beta_{VV}+\beta_{VV'}+\beta_{V'V'}$ and
      $\omega_{ab}=\omega_{VV}+\omega_{VV'}+\omega_{V'V'}$.

    \emph{Observation 1.} As $A^T$ preserves both $V$ and $V'$, the derivation $\Psi$ preserves this
    bidegree decomposition, while $\Omega_{ab}\in\bigwedge^k(V\otimes_{\mathbb R}\mathbb C)$ has
    bidegree $(k,0)$. Hence, each bidegree component of \eqref{eq:integ-Psi} vanishes separately; the
    one of bidegree $(k,2)$ gives $\Psi(\beta_{V'V'})\wedge\Omega_{ab}=0$ and, $\Omega_{ab}$ being
    nonzero with $V\cap V'=\{0\}$, we get $\Psi(\beta_{V'V'})=0$, that is,
    \begin{equation}\label{eq:opposite}
    \beta_{pq}\left(\lambda_p+\lambda_q\right)=0 \quad\text{for all } p,q \in I .
    \end{equation}
    In particular, $\beta_{pq}=B_{pq}+\mathbf i\,\omega_{pq}$ with $B_{pq}$ and $\omega_{pq}$ real, so
    that $\omega_{pq}\neq0$ implies $\beta_{pq}\neq0$ and therefore $\lambda_p=-\lambda_q$.

    \emph{Observation 2.} If $\omega_{V'V'}^{\,m}\neq0$, then, expanding it in the basis, some monomial
    has a nonzero coefficient, so there are $2m$ distinct indices in $I$ forming disjoint pairs
    $(p_1,q_1), \ldots, (p_m,q_m)$ with $\omega_{p_iq_i}\neq0$. By \eqref{eq:opposite}, these are $m$
    disjoint pairs of opposite eigenvalues.

    \medskip

    \textbf{Case 1:} $W \cap \overline{W}=\{0\}$

    Here $V\otimes_{\mathbb R}\mathbb C=W\oplus\overline{W}$ has dimension $2k$, so
    $x_1 \wedge y_1 \wedge \cdots \wedge x_k \wedge y_k \neq 0$ and
    $\dim_{\mathbb R}V'=2(n-k)-1$. For each eigenvalue $\lambda$ of $A$, the vectors $x_i,y_i$ with
    $\lambda_{j_i}=\lambda$ are $2m_\lambda$ linearly independent elements of $E_\lambda$, whence
    $\dim E_\lambda \ge 2m_\lambda$; extracting $m_\lambda$ disjoint identical pairs for each $\lambda$
    and summing, $V$ contributes $k$ disjoint pairs of identical eigenvalues.

    We claim that $\omega_{V'V'}^{\,n-k-1}\neq0$. Indeed, since
    $\Omega \wedge \overline{\Omega}=e^0\wedge\Omega'+\Omega_{ab}\wedge\overline{\Omega_{ab}}$ with
    $\Omega'=\Omega_0 \wedge\overline{\Omega_{ab}}+ (-1)^{k} \Omega_{ab} \wedge\overline{\Omega_0}$,
    and since $\omega_{ab}^{\,n-k}\wedge\Omega_{ab}\wedge\overline{\Omega_{ab}}$ vanishes for being a
    $2n$-form on the $(2n-1)$-dimensional space $\mathfrak h^*$, we get
      $$0 \neq \omega^{n-k} \wedge \Omega \wedge \overline{\Omega}= e^0 \wedge \left(
    \omega_{ab}^{\,n-k} \wedge \Omega' + (n-k)\, \omega_0 \wedge \omega_{ab}^{\,n-k-1} \wedge
    \Omega_{ab} \wedge \overline{\Omega_{ab}} \right).$$
    Suppose the first term is nonzero. As $\Omega'$ has bidegree $(2k-1,0)$ and $\dim_{\mathbb R}V=2k$,
    only the bidegree $(1,2(n-k)-1)$ component of $\omega_{ab}^{\,n-k}$ can survive against it, and
    that component is $(n-k)\,\omega_{VV'}\wedge\omega_{V'V'}^{\,n-k-1}$; in particular
    $\omega_{V'V'}^{\,n-k-1}\neq0$. Suppose instead the second term is nonzero. Now
    $\Omega_{ab}\wedge\overline{\Omega_{ab}}$ has bidegree $(2k,0)$, so
    $\omega_0 \wedge \omega_{ab}^{\,n-k-1}$ must contribute in bidegree $(0,2(n-k)-1)$, forcing again
    $\omega_{V'V'}^{\,n-k-1}\neq0$. This proves the claim.

    By Observation 2, $V'$ contains $n-k-1$ disjoint pairs of opposite eigenvalues. These use
    $2(n-k)-2$ of the $2(n-k)-1$ eigenvalues indexed by $I$, and are disjoint from the $k$ identical
    pairs found in $V$; exactly one eigenvalue is left over, which we call $\mu$. Therefore the
    spectrum of $A$ takes the first form of the statement.

    \medskip

    \textbf{Case 2:} $W \cap \overline{W}\neq\{0\}$

    We first show that $\dim_{\mathbb C}\left(W \cap \overline{W}\right)=1$. This subspace is stable
    under conjugation, and, since $W$ is $A^T$-invariant with $A^T$ real, it is $A^T$-invariant as well.
    Hence it admits a basis of real eigenvectors with our eigenbasis chosen adapted to it. Suppose,
    for the sake of contradiction, that two of the $v_i$ are real, say $v_1$ and $v_2$. Then every
    $\Omega_{\hat\jmath}$ contains $v_1$ or $v_2$, both of which occur in $\overline{\Omega_{ab}}$, so
    that, $\Omega_0$ being a combination of the $\Omega_{\hat\jmath}$,
    $$\Omega_0 \wedge \overline{\Omega_{ab}}=\Omega_{ab} \wedge \overline{\Omega_0}=0,$$
    whence $\Omega'=0$. But $\Omega_{ab}\wedge\overline{\Omega_{ab}}=0$ in this case, as any nonzero
    vector of $W\cap\overline W$ occurs in both factors, so that
    $\Omega \wedge \overline{\Omega}=e^0\wedge\Omega'=0$, a contradiction.

    So $v_1$ is real and $v_2, \ldots, v_k$ are not. Setting
    $W'=\mathrm{span}_{\mathbb C}\{v_2, \ldots, v_k\}$, we get
    $W' \cap \overline{W'} \subseteq W\cap\overline W=\mathbb{C}v_1$ with $v_1 \notin W'$, hence
    $W' \cap \overline{W'}=\{0\}$ and $x_2 \wedge y_2 \wedge \cdots \wedge x_k \wedge y_k \neq 0$.
    Arguing as in Case 1 with $W'$ in place of $W$, we obtain exactly $k-1$ disjoint pairs of identical
    eigenvalues, together with the single eigenvalue $\mu:=\lambda_{j_1}$ carried by $v_1$. Here
    $\dim_{\mathbb R}V=\dim_{\mathbb C}\left(W+\overline W\right)=2k-\dim_{\mathbb C}
    \left(W\cap\overline W\right)=2k-1$ and $\dim_{\mathbb R}V'=2(n-k)$, and
    $\Omega \wedge \overline{\Omega}=e^0\wedge\Omega'\neq0$ with $\Omega'$ of bidegree $(2k-1,0)$,
    hence a generator of the line $\bigwedge^{2k-1}\left(V\otimes_{\mathbb R}\mathbb C\right)$. Since
    $e^0 \wedge e^0 = 0$, only $\omega_{ab}^{\,n-k}$ survives in
     $$0 \neq \omega^{n-k}\wedge e^0 \wedge \Omega' = e^0 \wedge \omega_{ab}^{\,n-k}\wedge\Omega',$$ 
    and, $\Omega'$ being of top degree in $V$, only the bidegree $(0,2(n-k))$ component of
    $\omega_{ab}^{\,n-k}$ can survive, that is, $\omega_{V'V'}^{\,n-k}\neq0$. By Observation 2, $V'$
    splits into $n-k$ disjoint pairs of opposite eigenvalues, which exhaust it. Therefore the spectrum
    of $A$ consists of $k-1$ identical pairs, the eigenvalue $\mu$, and $n-k$ opposite pairs, taking
    the second form of the statement.
\end{proof}

\medskip

Next, we prove Corollary \ref{cor-CY-diag}. 

\medskip

\subsection{Left-invariant generalized Calabi--Yau structures}
Now we move on to discuss the left-invariant generalized complex structures for which the pure
spinor $\rho$ is closed, that is, the left-invariant generalized Calabi--Yau structures.

\begin{lemma} \label{lemma_Psi}
    Let $\rho = \exp(\beta) \wedge \Omega$ be a pure spinor, and write
    $\beta=e^0\wedge\beta_0+\beta_{ab}$ and $\Omega = e^0 \wedge \Omega_0 + \Omega_{ab}$. Then $\rho$
    is closed if and only if
    $$\Psi(\Omega_{ab}) = 0 \quad\text{and}\quad \Psi(\beta_{ab}) \wedge \Omega_{ab} = 0 .$$
\end{lemma}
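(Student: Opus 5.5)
We compute $\d\rho$ directly, using only Proposition \ref{prop:ext-derivative} and the algebra of the decomposition $\bigwedge^\bullet\mathfrak g_A^*=\bigwedge^\bullet\mathfrak h^*\oplus e^0\wedge\bigwedge^\bullet\mathfrak h^*$. As in the proof of Lemma \ref{lem:integ_equiv}, since $\beta$ has even degree we have
\[
\d\rho=\exp(\beta)\wedge\bigl(\d\beta\wedge\Omega+\d\Omega\bigr),
\]
and $\exp(\beta)\wedge(\cdot)$ is invertible. Hence $\d\rho=0$ if and only if $\d\Omega+\d\beta\wedge\Omega=0$. If $\Omega$ is homogeneous of degree $k$, the two terms have degrees $k+1$ and $k+3$. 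So closedness is equivalent to the pair of equations $\d\Omega=0$ and $\d\beta\wedge\Omega=0$.

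Next we translate each equation via Proposition \ref{prop:ext-derivative}. Since $\d(e^0\wedge\Omega_0)=0$, we get $\d\Omega=-e^0\wedge\Psi(\Omega_{ab})$. Because $\Psi$ preserves $\bigwedge^\bullet\mathfrak h^*\otimes\mathbb C$ and $e^0\wedge(\cdot)$ is injective there, $\d\Omega=0$ is equivalent to $\Psi(\Omega_{ab})=0$.

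Similarly, $\d\beta=-e^0\wedge\Psi(\beta_{ab})$, since the $e^0\wedge\beta_0$ part is closed. Using $e^0\wedge e^0=0$,
\[
\d\beta\wedge\Omega=-e^0\wedge\Psi(\beta_{ab})\wedge\Omega_{ab}.
\]
Here $\Psi(\beta_{ab})\wedge\Omega_{ab}$ lies in $\bigwedge^\bullet\mathfrak h^*\otimes\mathbb C$, so by injectivity this vanishes if and only if $\Psi(\beta_{ab})\wedge\Omega_{ab}=0$. This is exactly the computation already done in Lemma \ref{lemma-integ-equiv_new}.

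The case $k=0$ needs a small remark. With the normalization $\Omega=1$, $\Omega_{ab}=1$, $\Omega_0=0$, we have $\Psi(1)=0$ automatically, and the second condition reads $\Psi(\beta_{ab})=0$, i.e. $\d\beta=0$, consistent with the general argument. The only point needing care is the degree separation, which uses homogeneity of $\Omega$. That holds because $\Omega$ is decomposable of fixed degree $k$, so I expect no real obstacle.
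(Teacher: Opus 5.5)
Your proof is correct and follows essentially the paper's route: the invertibility of $\exp(\beta)\wedge(\cdot)$, the formula $\d=-e^0\wedge\Psi$ with $\d(e^0\wedge\gamma)=0$, injectivity of $e^0\wedge(\cdot)$ on $\bigwedge^\bullet\mathfrak h^*\otimes\mathbb C$, and a degree count. The only difference is the order of steps. You separate degrees first ($k+1$ versus $k+3$, as in Lemma \ref{lem:integ_equiv}) and then translate each equation into $\Psi$. The paper instead first reduces $\d\rho=0$ to $\Psi(\beta_{ab})\wedge\Omega_{ab}+\Psi(\Omega_{ab})=0$ and then separates the degrees $k+2$ and $k$.
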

\begin{proof}
    Since $\d=-e^0\wedge\Psi(\cdot)$ and $\Psi$ is a derivation,
    $$\d\rho = -e^0 \wedge \exp(\beta) \wedge \left(\Psi(\beta) \wedge \Omega + \Psi(\Omega)\right).$$
    Now $\Psi(e^0\wedge\gamma)=e^0\wedge\Psi(\gamma)$, so that
    $\Psi(\beta)=e^0\wedge\Psi(\beta_0)+\Psi(\beta_{ab})$ and
    $\Psi(\Omega) = e^0 \wedge \Psi(\Omega_0) + \Psi(\Omega_{ab})$. As every term carrying an $e^0$ is
    annihilated by the outer $e^0 \wedge (\cdot)$, we get
    $$\d\rho = -e^0 \wedge \exp(\beta) \wedge \left(\Psi(\beta_{ab}) \wedge \Omega_{ab}
    + \Psi(\Omega_{ab})\right).$$
    Since $\exp(\beta)\wedge(\cdot)$ is invertible, with inverse $\exp(-\beta)\wedge(\cdot)$, and
    $e^0 \wedge (\cdot)$ is injective on $\bigwedge^\bullet \mathfrak{h}^*
    \otimes_{\mathbb R}\mathbb{C}$, the condition $\d\rho=0$ is equivalent to
    $$\Psi(\beta_{ab}) \wedge \Omega_{ab} + \Psi(\Omega_{ab})=0 .$$
    Finally, $\Psi$ preserves the degree, so the two summands above have degrees $k+2$ and $k$
    respectively, and therefore vanish separately.
\end{proof}
Applying the integrability constraints of Lemma \ref{lemma-integ-equiv_new} to the closure conditions of Lemma \ref{lemma_Psi} reduces the Calabi--Yau requirement to a single algebraic condition, as follows.

\begin{theo}\label{thm-Calabi-Yau}
Let $\rho=\exp\beta\wedge\Omega$ be an invariant non-degenerate integrable pure spinor, with associated invariant subspace $W$.
Set $\Lambda=\operatorname{tr}(A^T|_W)$. Then
\[
\d\rho=-\Lambda e^0\wedge\rho,\qquad
\d\rho=0\ \Longleftrightarrow\ \Lambda=0.
\]
\end{theo}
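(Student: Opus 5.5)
The plan is to combine the explicit formula for $\d\rho$ obtained in the proof of Lemma \ref{lemma_Psi} with the two integrability conditions of Lemma \ref{lemma-integ-equiv_new}. As in that proof, since $\d=-e^0\wedge\Psi(\cdot)$ with $\Psi$ a derivation, and every term containing $e^0$ is killed by the outer $e^0\wedge(\cdot)$, we have
\[
\d\rho=-e^0\wedge\exp(\beta)\wedge\bigl(\Psi(\beta_{ab})\wedge\Omega_{ab}+\Psi(\Omega_{ab})\bigr).
\]
Integrability gives, by Lemma \ref{lemma-integ-equiv_new}(b), $\Psi(\beta_{ab})\wedge\Omega_{ab}=0$. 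By the final assertion of the same lemma, which uses the $A^T$-invariance of $W$, it also gives $\Psi(\Omega_{ab})=\tr(A^T|_W)\,\Omega_{ab}=\Lambda\,\Omega_{ab}$. Substituting these yields $\d\rho=-\Lambda\,e^0\wedge\exp(\beta)\wedge\Omega_{ab}$.

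It remains to replace $\Omega_{ab}$ by $\Omega$ and to commute $e^0$ past $\exp(\beta)$. Since $\Omega=e^0\wedge\Omega_0+\Omega_{ab}$ and $e^0\wedge e^0=0$, we have $e^0\wedge\Omega_{ab}=e^0\wedge\Omega$. Since $\exp(\beta)$ has even degree, it commutes with $e^0$. Hence
\[
\d\rho=-\Lambda\,e^0\wedge\exp(\beta)\wedge\Omega=-\Lambda\,e^0\wedge\rho,
\]
which is the first claim. For $k=0$ we use the convention $\Omega=1$, $W=0$, $\Lambda=0$, under which integrability already says $\Psi(\beta_{ab})=0$ and the formula reads $\d\rho=0$.

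For the equivalence, if $\Lambda=0$ then $\d\rho=0$ immediately. Conversely, suppose $\d\rho=0$. Lemma \ref{lemma_Psi} gives $\Psi(\Omega_{ab})=0$, hence $\Lambda\,\Omega_{ab}=0$. We saw earlier, from non-degeneracy, that $\Omega_{ab}\neq0$, so $\Lambda=0$. An alternative argument for this direction is that $e^0\wedge\rho$ has degree-$(k+1)$ component $e^0\wedge\Omega_{ab}\neq0$, because $e^0\wedge(\cdot)$ is injective on $\bigwedge^\bullet\mathfrak h^*\otimes_{\mathbb R}\mathbb C$.

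The only point needing care is this nonvanishing of $\Omega_{ab}$, together with the bookkeeping that lets the two summands be treated separately by degree. Both are already established before this theorem, so no real obstacle is expected.
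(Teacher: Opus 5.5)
Your proposal is correct and follows the paper's proof. You substitute $\Psi(\beta_{ab})\wedge\Omega_{ab}=0$ and $\Psi(\Omega_{ab})=\Lambda\,\Omega_{ab}$ from Lemma \ref{lemma-integ-equiv_new} into the formula for $\d\rho$ from the proof of Lemma \ref{lemma_Psi}, and you get the equivalence from $\Omega_{ab}\neq0$; the paper phrases this last step as $e^0\wedge\rho\neq0$, which is your alternative argument. Your explicit treatment of the $k=0$ convention and of the identity $e^0\wedge\Omega_{ab}=e^0\wedge\Omega$ fills in details the paper leaves implicit.
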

\begin{proof}
Lemma \ref{lemma-integ-equiv_new} gives $\Psi\Omega_{ab}=\Lambda\Omega_{ab}$ and
$\Psi(\beta_{ab})\wedge\Omega_{ab}=0$. Substitution in the calculation of Lemma \ref{lemma_Psi} proves the formula.
Also $e^0\wedge\rho\ne0$, since multiplication by $\exp\beta$ is invertible and $\Omega_{ab}\ne0$. Other invariant generators
differ by nonzero constants.
\end{proof}

\medskip

\begin{proof}[Proof of Corollary \ref{cor-CY-diag}]
    Since a generalized Calabi--Yau structure of type $k$ is, in particular, a generalized complex
    structure of type $k$, Theorem \ref{thm:os-meninos-vem-como} says that $\mathrm{spec}(A)$ assumes
    one of the two forms in the statement, according to the two cases of its proof.

    In Case 1, the eigenbasis $v_1, \ldots, v_k$ of $A^T|_W$ carries exactly the eigenvalues
    $\lambda_1, \ldots, \lambda_k$ of the $k$ identical pairs. Therefore, by Theorem
    \ref{thm-Calabi-Yau}, $$0=\tr\left(A^T|_W\right)=\sum_{i=1}^k \lambda_i.$$

    In Case 2, the eigenbasis of $A^T|_W$ carries the eigenvalues $\lambda_1, \ldots, \lambda_{k-1}$ of
    the identical pairs, together with the eigenvalue $\mu$ of the real eigenvector $v_1$. Hence,
    Theorem \ref{thm-Calabi-Yau} reads
    $$0=\tr\left(A^T|_W\right)= \mu +\sum_{i=1}^{k-1} \lambda_i.$$

    The converse follows from the proof of Theorem \ref{thm:os-meninos-vem-como}: the structures
    constructed there satisfy $\d\rho = -\Lambda e^0 \wedge \rho$, with
    $\Lambda=\sum_{i=1}^k \lambda_i$ in the first form and $\Lambda=\mu+\sum_{i=1}^{k-1} \lambda_i$ in
    the second. Under the additional hypotheses of the statement, we have $\Lambda=0$ in either case,
    so that $\d\rho=0$.
\end{proof}

\medskip

This concludes the discussion of the diagonal case. We now turn to left-invariant generalized complex
structures on more general almost abelian Lie groups, namely those determined by non-diagonal
matrices.

\medskip

\subsection{Generalized complex structures via algebraic reduction} Beyond the diagonal case, we establish a constructive method for left-invariant generalized complex structures on $\mathfrak{g}_A$ via algebraic reduction. Analogous to the construction presented in \cite{ACK}*{Section 2}, our approach couples a complex structure on a quotient algebra with a symplectic form on an $A$-invariant ideal.

\begin{theo} \label{thm:gcs_from_fibration}
Let $G_A$ be a $2n$-dimensional almost abelian Lie group with Lie algebra
$\mathfrak{g}_A = \mathbb{R}e_0 \ltimes_A \mathfrak{h}$. Suppose that $\mathfrak{h}$ decomposes into
$A$-invariant subspaces $\mathfrak{h} = \mathfrak{h}_c \oplus \mathfrak{h}_s$, with
$\dim \mathfrak{h}_c = 2k-1$ and $\dim \mathfrak{h}_s = 2(n-k)$, where $1\le k\le n$, and write $A_c=A|_{\mathfrak h_c}$
and $A_s=A|_{\mathfrak h_s}$. Assume that:
\begin{enumerate}
    \item the quotient Lie algebra $\mathfrak{g}_c := \mathfrak{g}_A/\mathfrak{h}_s \cong
    \mathbb{R}e_0 \ltimes_{A_c} \mathfrak{h}_c$ admits a complex structure;
    \item there exists a closed $2$-form $\omega \in \bigwedge^2 \mathfrak{g}_A^*$ restricting to a
    non-degenerate form $\sigma \in \bigwedge^2 \mathfrak{h}_s^*$ on the ideal $\mathfrak{h}_s$.
\end{enumerate}
Then $G_A$ admits a left-invariant generalized complex structure of type $k$.
\end{theo}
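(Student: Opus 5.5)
Let $J$ be a complex structure on $\mathfrak g_c$. The plan is to build an explicit pure spinor $\rho=\exp(\mathbf i\omega)\wedge\Omega$ of type $k$ and verify it with Lemma \ref{lemma-integ-equiv_new}. The quotient map $\pi\colon\mathfrak g_A\to\mathfrak g_c$ is a Lie algebra morphism with kernel $\mathfrak h_s$, so $\pi^*$ identifies $\mathfrak g_c^*$ with the annihilator $\mathfrak h_s^0=\mathbb Re^0\oplus\mathfrak h_c^*$. Here we use the splitting $\mathfrak h^*=\mathfrak h_c^*\oplus\mathfrak h_s^*$ dual to $\mathfrak h=\mathfrak h_c\oplus\mathfrak h_s$, in which $A^T$ preserves both summands. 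Moreover $\pi^*$ commutes with $\d$.

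Take a basis $\theta_1,\dots,\theta_k$ of the $(1,0)$-forms of $J$ and pull it back, so $\Omega:=\pi^*(\theta_1\wedge\cdots\wedge\theta_k)$. Write $\theta_i=c_ie^0+\eta_i$ with $\eta_i\in\mathfrak h_c^*\otimes\mathbb C$, and set $W=\mathrm{span}\{\eta_i\}$. Integrability of $J$ on the $2k$-dimensional almost abelian algebra $\mathfrak g_c$ is exactly the $\mathfrak g_c$-version of Lemma \ref{lemma-integ-equiv_new} with $\beta=0$: $W$ is $A_c^T$-invariant. The lemma applies because the complex structure is a type-$k$ generalized complex structure on $\mathfrak g_c$, by Example \ref{ex:extremal-cases}. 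Since $A^T$ restricted to $\mathfrak h_c^*$ is $A_c^T$, $W\subset\mathfrak h^*\otimes\mathbb C$ is $A^T$-invariant, which is condition (a) on $\mathfrak g_A$. Non-degeneracy of $J$ gives $\Omega\wedge\bar\Omega\neq0$, and this form is a nonzero multiple of the volume form of $\mathfrak h_s^0$, namely $e^0\wedge\mathrm{vol}(\mathfrak h_c^*)$.

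For the $2$-form, take $\beta=\mathbf i\omega$ with $\omega$ the closed form from hypothesis (ii). Since $\d\omega=0$, Proposition \ref{prop:ext-derivative} gives $\Psi(\omega_{ab})=0$, so condition (b), $\Psi(\beta_{ab})\wedge\Omega_{ab}=0$, holds trivially.

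Non-degeneracy needs $\omega^{n-k}\wedge\Omega\wedge\bar\Omega\neq0$. Since $\Omega\wedge\bar\Omega$ spans $\bigwedge^{2k}\mathfrak h_s^0$, wedging with it kills every term of $\omega^{n-k}$ except the component in $\bigwedge^{2(n-k)}\mathfrak h_s^*$, under the splitting $\mathfrak g_A^*=\mathfrak h_s^0\oplus\mathfrak h_s^*$. That component is $\sigma^{n-k}$, with $\sigma=\omega|_{\mathfrak h_s}$. Then $\sigma^{n-k}\neq0$ because $\sigma$ is non-degenerate on the $2(n-k)$-dimensional space $\mathfrak h_s$. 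The main care point is making this bidegree argument precise: the restriction to $\mathfrak h_s$ corresponds to the pure $\mathfrak h_s^*$-component only once the complement $\mathfrak h_s^0$ is fixed, and that is exactly what $\Omega\wedge\bar\Omega$ absorbs. Hence the product is a nonzero multiple of the volume form.

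Finally, Lemma \ref{lemma-integ-equiv_new} yields integrability, and the type is $\deg\Omega=k$.
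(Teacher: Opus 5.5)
Your proof is correct and is essentially the paper's. You build the same spinor $\rho=\exp(\mathbf i\omega)\wedge\pi^*\Omega$ and use the same argument that only $\sigma^{n-k}$ survives against $\pi^*(\Omega\wedge\bar\Omega)$. The one difference is the integrability check: you go through Lemma \ref{lemma-integ-equiv_new}, pulling back the $A_c^T$-invariant $W$ from $\mathfrak g_c$ and using $\Psi(\omega_{ab})=0$. The paper instead computes $\d\rho=\pi^*\alpha\wedge\rho$ directly from $\d\omega=0$ and $\d\circ\pi^*=\pi^*\circ\d_c$, which does not rely on the almost abelian structure.
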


\begin{proof}
Let $\Omega \in \bigwedge^k \mathfrak{g}_c^* \otimes \mathbb{C}$ be a non-degenerate pure spinor
generating the complex structure on $\mathfrak{g}_c$, which has dimension $2k$. By integrability there
is $\alpha \in \mathfrak{g}_c^* \otimes \mathbb{C}$ with $\d_c\Omega = \alpha \wedge \Omega$, where
$\d_c$ is the differential of $\mathfrak{g}_c$. Let $\pi\colon\mathfrak{g}_A \to \mathfrak{g}_c$ be
the canonical projection and $\iota\colon\mathfrak{h}_s \hookrightarrow \mathfrak{g}_A$ the inclusion,
so that $\iota^*\omega = \sigma$. Define
\[ \rho = \exp({\mathbf i\omega}) \wedge \pi^*\Omega \in \textstyle\bigwedge^\bullet \mathfrak{g}_A^*
\otimes \mathbb{C}. \]
Since $\Omega$ is decomposable, so is $\pi^*\Omega$, and $\rho$ is a pure spinor of type $k$.

For non-degeneracy, $\Omega \wedge \bar{\Omega} \neq 0$ gives
$\pi^*\Omega \wedge \overline{\pi^*\Omega} = \pi^*\left(\Omega \wedge \bar{\Omega}\right) \neq 0$, a
generator of the line $\bigwedge^{2k}\pi^*\mathfrak g_c^*$, while $\sigma^{n-k}\neq0$ generates
$\bigwedge^{2(n-k)}\mathfrak h_s^*$. Now decompose
$\omega=\omega_{ss}+\omega_{cs}+\omega_{cc}$ according to
$\mathfrak{g}_A^* \cong \pi^*\mathfrak{g}_c^* \oplus \mathfrak{h}_s^*$, with
$\omega_{ss}=\sigma$. In the product $\omega^{n-k}\wedge\pi^*\Omega\wedge\overline{\pi^*\Omega}$, the
second factor already occupies all $2k$ directions of $\pi^*\mathfrak g_c^*$, so every monomial of
$\omega^{n-k}$ involving $\omega_{cs}$ or $\omega_{cc}$ is annihilated, and only $\sigma^{n-k}$
survives:
\[ \omega^{n-k} \wedge \pi^*\Omega \wedge \overline{\pi^*\Omega}
= \sigma^{n-k} \wedge \pi^*\Omega \wedge \overline{\pi^*\Omega} \neq 0, \]
the two factors lying in complementary subspaces of $\bigwedge^\bullet\mathfrak g_A^*$ of
complementary degrees $2(n-k)$ and $2k$. Thus $\rho$ is non-degenerate.

For integrability, since $\d\omega=0$ and $\d\circ\pi^*=\pi^*\circ\d_c$,
\[ \d\rho = \mathbf i\,\d\omega\wedge \exp({\mathbf i\omega})\wedge\pi^*\Omega
+ \exp({\mathbf i\omega}) \wedge \pi^*\left(\d_c\Omega\right)
= \exp({\mathbf i\omega}) \wedge \pi^*\alpha \wedge \pi^*\Omega = \pi^*\alpha \wedge \rho, \]
so that $\d\rho=\left(0+\pi^*\alpha\right)\cdot\rho$ and Lemma \ref{lem:integ_equiv} applies with
$X+\xi=\pi^*\alpha$. Hence $\rho$ defines a left-invariant generalized complex structure of type $k$
since $\pi^*\Omega$ has degree $k$.
\end{proof}

Condition (i) can be checked using \cite{Arroyo2025}. For condition (ii), combine the symplectic Jordan criterion of \cite{Arroyo2025}*{Theorem 5.6} with extension by zero along
the specified invariant complement.

\begin{prop} \label{prop:jordan_equivalence}
Let $\mathfrak g_A=\mathbb Re_0\ltimes_A\mathfrak h$ and assume $\mathfrak h=\mathfrak h_c\oplus\mathfrak h_s$ is a decomposition
into $A$-invariant real subspaces, with $\dim\mathfrak h_s=2(n-k)$. Put $A_s=A|_{\mathfrak h_s}$. In a Jordan-basis formulation, it suffices that $\mathfrak h_s$
be the sum of entire real Jordan blocks. Then the following are equivalent:
\begin{enumerate}[label=(\roman*)]
    \item condition (ii) of Theorem \ref{thm:gcs_from_fibration} holds for $\mathfrak h_s$;
    \item $A_s\in\mathfrak{sp}\left(\mathfrak h_s,\sigma\right)$ for some symplectic form $\sigma$ on
    $\mathfrak h_s$;
    \item the real Jordan form of $A_s$ satisfies:
    \begin{enumerate}[label=(\alph*)]
        \item the blocks ${\mathcal J}_m(\lambda)$ with $\lambda\in\mathbb R\setminus\{0\}$ occur in matching
        pairs ${\mathcal J}_m(\lambda)$, ${\mathcal J}_m(-\lambda)$;
        \item the blocks $C_m(a,b)$ with $a\neq0$ and $b\neq0$ occur in matching pairs $C_m(a,b)$,
        $C_m(-a,b)$;
        \item the nilpotent blocks ${\mathcal J}_m(0)$ of odd size $m$ occur with even multiplicity.
    \end{enumerate}
    No condition is imposed on the blocks $C_m(0,b)$ with $b\neq0$.
\end{enumerate}

\end{prop}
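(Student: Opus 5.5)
The plan is to prove (i)$\Leftrightarrow$(ii) by a direct computation with the Maurer--Cartan formula of Proposition \ref{prop:ext-derivative}, and to obtain (ii)$\Leftrightarrow$(iii) from the symplectic Jordan criterion \cite{Arroyo2025}*{Theorem 5.6}. The idea is that a closed invariant $2$-form on $\mathfrak g_A$ restricted to $\mathfrak h$ is exactly a form $\omega_{ab}$ with $\Psi(\omega_{ab})=0$, i.e.\ one for which $A$ is skew.

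\emph{(i)$\Rightarrow$(ii).} Write $\omega=\omega_{ab}+e^0\wedge\omega_0$. By Proposition \ref{prop:ext-derivative}, $\d\omega=-e^0\wedge\Psi(\omega_{ab})$. Injectivity of $e^0\wedge(\cdot)$ on $\bigwedge^\bullet\mathfrak h^*$ then shows that $\d\omega=0$ is equivalent to $\Psi(\omega_{ab})=0$. Unwinding the derivation gives
\[
\Psi(\omega_{ab})(X,Y)=-\omega_{ab}(AX,Y)-\omega_{ab}(X,AY)
\]
up to a sign convention, since $\Psi|_{\mathfrak h^*}=A^T$ acts by precomposition with $A$. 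So the condition says $A$ is skew-adjoint for $\omega_{ab}$. Because $\mathfrak h_s$ is $A$-invariant, restricting to it gives $\sigma(A_sX,Y)+\sigma(X,A_sY)=0$ for $X,Y\in\mathfrak h_s$. Since $\sigma$ is non-degenerate by hypothesis, this is exactly $A_s\in\mathfrak{sp}(\mathfrak h_s,\sigma)$.

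\emph{(ii)$\Rightarrow$(i).} Given $\sigma$, extend it by zero along the invariant complement $\mathfrak h_c$ and along $e_0$. Concretely, let $p\colon\mathfrak h\to\mathfrak h_s$ be the projection with kernel $\mathfrak h_c$ and set $\omega:=p^*\sigma\in\bigwedge^2\mathfrak h^*\subset\bigwedge^2\mathfrak g_A^*$. Since $\mathfrak h_c$ and $\mathfrak h_s$ are both $A$-invariant, $p$ commutes with $A$, hence $\Psi(p^*\sigma)=p^*(\Psi_s\sigma)=0$, and the computation above gives $\d\omega=0$. Its restriction to $\mathfrak h_s$ is $\sigma$, which proves (i). This step is where the $A$-invariance of the complement, guaranteed by taking entire Jordan blocks, is genuinely used.

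\emph{(ii)$\Leftrightarrow$(iii).} This is the linear-algebra classification of conjugacy classes in $\mathfrak{sp}(2m,\mathbb R)$ by real Jordan form, so I would cite \cite{Arroyo2025}*{Theorem 5.6} (equivalently the classical Williamson/Burgoyne--Cushman normal forms) rather than reprove it. For orientation, the idea behind it is as follows. Skew-adjointness forces the spectrum to be symmetric under $\lambda\mapsto-\lambda$ with equal Jordan data, which gives (a) and (b), since $C_m(a,b)$ is sent to $C_m(-a,b)$. For the sufficiency direction, one builds explicit symplectic pairings between a block and its mirror using the dual pairing $V_\lambda\times V_{-\lambda}\to\mathbb R$. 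The self-mirrored eigenvalues $0$ and $\pm\mathbf i b$ need separate treatment. Nilpotent blocks of even size carry their own symplectic form, while those of odd size carry an invariant symmetric rather than alternating form, hence must pair up, which is (c). The blocks $C_m(0,b)$ always admit an invariant symplectic form, so they impose no condition.

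The main obstacle I expect is not the equivalence itself but making sure the cited criterion applies verbatim to $A_s$ on $\mathfrak h_s$ rather than to a full almost abelian algebra. One must check that \cite{Arroyo2025}*{Theorem 5.6} is really a statement about the existence of $\sigma$ with $A_s\in\mathfrak{sp}(\sigma)$, and in particular that no extra factor such as $\mu$ enters. If the cited theorem is stated for $\mathfrak g_A$, I would first reduce to it by the equivalence (i)$\Leftrightarrow$(ii) proven above.
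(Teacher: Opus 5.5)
Your proposal is correct and follows the paper's proof essentially step for step. Closedness reduces to $\Psi(\omega_{ab})=0$ via Proposition \ref{prop:ext-derivative}, and restriction to the invariant $\mathfrak h_s$ gives skew-adjointness. The converse extends $\sigma$ by zero along the invariant complement $\mathbb Re_0\oplus\mathfrak h_c$, and (ii)$\Leftrightarrow$(iii) is delegated to \cite{Arroyo2025}*{Theorem 5.6}. One minor correction: since $A^T\xi=\xi\circ A$, one gets $\Psi(\omega_{ab})(X,Y)=\omega_{ab}(AX,Y)+\omega_{ab}(X,AY)$ with plus signs, which does not affect the vanishing condition.
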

\begin{proof}
\emph{(i) $\Rightarrow$ (ii).} Let $\omega = e^0 \wedge \omega_0 + \omega_{ab} \in \bigwedge^2
\mathfrak{g}_A^*$ be closed with $\iota^*\omega = \sigma$ non-degenerate on $\mathfrak{h}_s$. By
Proposition \ref{prop:ext-derivative}, $\d\omega = - e^0 \wedge \Psi_A(\omega_{ab})$, so $\d\omega=0$
gives $\Psi_A(\omega_{ab}) = 0$. Restricting to the $A$-invariant subspace $\mathfrak h_s$, where
$\omega_{ab}$ restricts to $\sigma$, yields $\Psi_{A_s}(\sigma)=0$, that is,
\begin{equation}\label{eq:sp-condition}
\sigma(A_s u, v) + \sigma(u, A_s v)=0,\qquad u,v\in\mathfrak h_s.
\end{equation}

\emph{(ii) $\Longleftrightarrow$ (iii).}
A non-degenerate alternating matrix $S$ represents an invariant
symplectic form when $A_s^TS+SA_s=0$.
Equivalently, $A_s$ is conjugate to an element of the standard
symplectic Lie algebra. Theorem 5.6 of \cite{Arroyo2025}
gives precisely the conditions in (iii).
For paired real blocks, put $P_{ij}=(-1)^{i-1}\delta_{i+j,m+1}$.
Then
\[
J_m(\lambda)^TP+PJ_m(-\lambda)=0,
\]
and $\left(\begin{smallmatrix}0&P\\-P^T&0\end{smallmatrix}\right)$
is a preserved non-degenerate alternating matrix on their direct sum.
For one nilpotent block of even size, $P$ itself is alternating.
The remaining nilpotent and non-real cases are included in the
cited theorem.

\emph{(ii) $\Rightarrow$ (i).} Let $\sigma$ be as in (ii), so that $\Psi_{A_s}(\sigma)=0$ by
\eqref{eq:sp-condition}, and let $p_s\colon\mathfrak{g}_A \to \mathfrak{h}_s$ be the projection
determined by the $A$-invariant decomposition
$\mathfrak g_A=\left(\mathbb Re_0\oplus\mathfrak h_c\right)\oplus\mathfrak h_s$, where
$\mathfrak h_c$ is the invariant complement in the hypothesis. Set $\omega := p_s^*\sigma$, which
restricts to $\sigma$ on $\mathfrak h_s$ by construction. Since the decomposition is $A$-invariant,
$\Psi_A \circ p_s^* = p_s^* \circ \Psi_{A_s}$.  Proposition \ref{prop:ext-derivative} then gives
\[ \d\omega = -e^0 \wedge \Psi_A\left(p_s^*\sigma\right)
= -e^0 \wedge p_s^*\left(\Psi_{A_s}\sigma\right)= 0 . \]
Thus $\omega$ is a closed $2$-form on $\mathfrak g_A$ restricting to a non-degenerate form on
$\mathfrak h_s$, verifying Theorem \ref{thm:gcs_from_fibration}(ii).
\end{proof}

Theorem \ref{thm:gcs_from_fibration} and Proposition \ref{prop:jordan_equivalence} together give a
direct algebraic recipe for producing almost abelian Lie groups with left-invariant generalized
complex structures, as we register in the next remark.

\begin{remark}\label{rem:gcs_algorithm}\textbf{A constructive algorithm.}
Given $A$ in real Jordan form, one looks for a block partition
$A = A_c \oplus A_s$ such that
\begin{enumerate}
    \item $A_c$ determines a $2k$-dimensional almost abelian Lie algebra admitting a left-invariant complex
    structure, and
    \item $A_s$ has its blocks paired as in Proposition \ref{prop:jordan_equivalence}.
\end{enumerate}
If such a partition exists, then $G_A$ admits a left-invariant generalized complex structure of type
$k$.
\end{remark}

\medskip

\subsection{Type obstructions in the non-diagonal case}

In this subsection we prove the following:

\begin{theo} \label{thm:type-obstruction}
Let $G_A$ be an almost abelian Lie group with Lie algebra $\mathfrak{g}_{A}=\mathbb{R}e_{0}\ltimes_{A}\mathfrak{h}$. 
For each real eigenvalue $\lambda$ of the matrix $A$, let $s_\lambda$ denote its geometric multiplicity, and suppose its associated generalized eigenspace decomposes into real Jordan blocks of sizes $m_1^{(\lambda)} \ge m_2^{(\lambda)} \ge \cdots \ge m_{s_\lambda}^{(\lambda)}$. Let $m_\lambda = \sum_{i=1}^{s_\lambda} m_i^{(\lambda)}$ be the algebraic multiplicity of $\lambda$. If $G_A$ admits a left-invariant generalized complex structure of type $k$, then:
$$k \leq \min_{\lambda \in \mathrm{spec}(A)\cap\mathbb R} \left( \sum_{j=1}^{\lfloor s_\lambda/2 \rfloor} m_{2j}^{(\lambda)} + \left\lfloor \frac{2n - 1 - m_\lambda}{2} \right\rfloor +1 \right)$$
\end{theo}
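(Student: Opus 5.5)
The plan is to work entirely on the dual side, as in Lemma \ref{lemma-integ-equiv_new}. Fix a left-invariant generalized complex structure of type $k\ge1$; the case $k=0$ is trivial, since the right-hand side of the bound is at least $1$. Let $\rho=\exp(\beta)\wedge\Omega$ be a generator and let $W=\mathrm{span}_{\mathbb C}\{\eta_1,\dots,\eta_k\}\subset\mathfrak h^*\otimes\mathbb C$ be the associated subspace. By Lemma \ref{lemma-integ-equiv_new}, $W$ is $A^T$-invariant and $\dim_{\mathbb C}W=k$.

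\emph{Step 1: $\dim_{\mathbb C}(W\cap\overline W)\le1$ in general.} I will repeat the Case 2 argument from the proof of Theorem \ref{thm:os-meninos-vem-como}, which does not really use diagonality. $W\cap\overline W$ is stable under conjugation, so it has a real basis. If it contained two independent real covectors $v_1,v_2$, extend them to a basis of $W$. Every $(k-1)$-fold wedge of basis vectors then contains $v_1$ or $v_2$, both of which appear in $\overline{\Omega_{ab}}$. Hence $\Omega'=\Omega_0\wedge\overline{\Omega_{ab}}\pm\Omega_{ab}\wedge\overline{\Omega_0}=0$ and also $\Omega_{ab}\wedge\overline{\Omega_{ab}}=0$. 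This gives $\Omega\wedge\overline\Omega=0$, contradicting non-degeneracy.

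\emph{Step 2: split $W$ along generalized eigenspaces.} Since $W$ is $A^T$-invariant, $W=\bigoplus_\nu W_\nu$ with $W_\nu=W\cap E_\nu$, where $E_\nu$ is the complexified generalized eigenspace of $A^T$. Fix a real eigenvalue $\lambda$. Then $E_\lambda$ is conjugation-stable, so $\overline{W_\lambda}=\overline W\cap E_\lambda$. Put $R=\bigoplus_{\nu\ne\lambda}E_\nu$; this is also conjugation-stable, of complex dimension $2n-1-m_\lambda$, and $W_R:=W\cap R$ satisfies $\overline{W_R}=\overline W\cap R$. Since $(W\cap\overline W)$ decomposes compatibly, $\dim(W_\lambda\cap\overline{W_\lambda})+\dim(W_R\cap\overline{W_R})\le1$. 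In $R$ this gives $2\dim W_R-\dim(W_R\cap\overline{W_R})\le 2n-1-m_\lambda$, hence $\dim W_R\le\lfloor(2n-1-m_\lambda)/2\rfloor+\epsilon_R$. In $E_\lambda$, choose a complement $U\subset W_\lambda$ of the intersection that is still $A^T$-invariant. Since invariance of the complement may fail, I would instead quotient by the invariant line $W_\lambda\cap\overline{W_\lambda}$, or bound via a real-rank argument. Either way this yields $\dim W_\lambda\le d_\lambda+\epsilon_\lambda$ with $\epsilon_\lambda+\epsilon_R\le1$, where $d_\lambda$ bounds invariant subspaces $U$ with $U\cap\overline U=0$. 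Summing gives $k\le d_\lambda+\lfloor(2n-1-m_\lambda)/2\rfloor+1$. Minimizing over $\lambda$ gives the theorem.

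\emph{Step 3 (main obstacle): $d_\lambda\le\sum_{j\le\lfloor s_\lambda/2\rfloor}m_{2j}^{(\lambda)}$.} Let $N=A^T-\lambda$ restricted to $E_\lambda$; it is real and nilpotent with Jordan partition $m^{(\lambda)}$. If $U$ is $N$-invariant with $U\cap\overline U=0$, then $U\oplus\overline U$ is $N$-invariant. Moreover $\overline U\cong U$ as $\mathbb C[N]$-modules, because conjugation intertwines $N$ with itself and so preserves the ranks of $N^r$. Hence $N|_{U\oplus\overline U}$ has the doubled partition $(\mu_1,\mu_1,\mu_2,\mu_2,\dots)$. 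For an invariant subspace one has $\dim\ker N^r|_{U\oplus\overline U}\le\dim\ker N^r$ for all $r$, i.e.\ the conjugate partitions satisfy $\mathrm{doubled}(\mu)^*_r\le (m^{(\lambda)})^*_r$ termwise. This is equivalent to the sorted parts inequality $\mathrm{doubled}(\mu)_i\le m_i^{(\lambda)}$, so $\mu_j\le m_{2j}^{(\lambda)}$ and the doubled partition has at most $s_\lambda$ parts. Summing gives $\dim U=\sum_j\mu_j\le\sum_{j\le\lfloor s_\lambda/2\rfloor}m^{(\lambda)}_{2j}$.

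The delicate points are this partition comparison and the handling of the single real line in Step 2, which must be charged only once across $\lambda$ and $R$. The first statement follows at once: if all real eigenvalues have $s_\lambda=1$, the sum is empty. With the $+1$ bookkeeping done more carefully, using that a one-block real space $E_\lambda$ admits no $U$ with $U\cap\overline U=0$, I expect the bound to sharpen to $k\le1$; I would check this special case separately.
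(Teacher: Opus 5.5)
Your architecture matches the paper's. Step~1 is Lemma~\ref{lem:dim-W-conj} in disguise, since $\dim(W+\overline W)=2k-\dim(W\cap\overline W)$. The splitting $W=W_\lambda\oplus W_R$ and the bound on the complement are correct, and the doubled-partition comparison is exactly the idea of the Appendix. However, the step you flag as the main obstacle has two genuine gaps.

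\emph{First gap: the inference in Step~3 is invalid.} $\dim\ker N^r$ is the sum of the first $r$ parts of the conjugate partition. So the inequalities $\dim\ker N^r|_{U\oplus\overline U}\le\dim\ker N^r$ only give partial-sum inequalities, not termwise ones, and these are too weak for the theorem. For example, take $m^{(\lambda)}=(3,1,1)$ and the doubled partition $(2,2)$. Their kernel dimensions are $3,4,5$ and $2,4,4$, so all your inequalities hold. Yet $\mu_1=2>m^{(\lambda)}_2=1$, and $\dim U=2$ would exceed the bound $m_2^{(\lambda)}=1$ you need. The invariant that works is the number of blocks of size at least $r$, namely $\dim(\ker N\cap N^{r-1}X)$. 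It is monotone under inclusion of invariant subspaces $X$, and this is how Lemma~\ref{lem:3} obtains termwise domination.

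\emph{Second gap: the case $L:=W_\lambda\cap\overline{W_\lambda}\ne0$ is left at ``either way''.} Your worry about complements is justified. Take two blocks of size two with $Ne^{r,1}=e^{r,2}$, $Ne^{r,2}=0$, and the invariant plane $W_\lambda=\mathrm{span}_{\mathbb C}\{e^{1,1}+\mathbf i e^{2,2},\,e^{1,2}\}$. It meets its conjugate in $L=\mathbb C e^{1,2}$, which is its only invariant line, so no invariant complement exists. But after quotienting, $W_\lambda/L$ is no longer a subspace of $E_\lambda$, so your $d_\lambda$ does not bound it. What is needed is termwise domination by $m^{(\lambda)}$ for the subquotient $(W_\lambda+\overline{W_\lambda})/L=Z\oplus\overline Z$, where $Z=W_\lambda/L$. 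The paper gets this from Lemma~\ref{lem:2} together with the duality step of Lemma~\ref{lem:3}. Alternatively, $\operatorname{rank}N^{r-1}-\operatorname{rank}N^r$ cannot increase when passing to a quotient by an invariant subspace. Either way, this bounds $c(W_\lambda)=\dim W_\lambda-\dim(W_\lambda\cap\overline{W_\lambda})$ by $\sum_j m_{2j}^{(\lambda)}$ for \emph{every} invariant $W_\lambda$. Your $\epsilon$-bookkeeping then collapses to the paper's identity $\dim(W+\overline W)=k+c(W_\lambda)+c(W_R)$.

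Finally, your closing remark about $k\le1$ lies outside this statement. It also does not follow from the per-$\lambda$ bound, where the term $\lfloor(2n-1-m_\lambda)/2\rfloor$ survives. The paper proves it separately in Proposition~\ref{prop:k-leq-1}, by showing that every invariant $W$ is then real.
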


We start with an auxiliary result. For context, recall that Lemma \ref{lemma-integ-equiv_new} states that the
existence of a left-invariant generalized complex structure of type $k$ requires a $k$-dimensional
$A^T$-invariant complex subspace $W \subset \mathfrak{h}^* \otimes_{\mathbb R} \mathbb{C}$. The
following strengthens this condition:

 \begin{lemma} \label{lem:dim-W-conj}
     Let $G_A$ admit a left-invariant generalized complex structure of type $k \ge 1$, with
     non-degenerate pure spinor $\rho = \exp(\beta) \wedge \Omega$, and let $W$ be as in Lemma
     \ref{lemma-integ-equiv_new}. Then
     $$2k-1 \le \dim_{\mathbb{C}}\left(W+\overline{W}\right).$$
 \end{lemma}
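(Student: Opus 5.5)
We show that $\dim_{\mathbb C}(W\cap\overline W)\le 1$. Since $\dim_{\mathbb C}(W+\overline W)=2k-\dim_{\mathbb C}(W\cap\overline W)$, this gives the claimed bound $2k-1\le\dim_{\mathbb C}(W+\overline W)$. The argument only uses non-degeneracy, $\Omega\wedge\overline\Omega\neq0$, and works directly in $\mathfrak h^*\otimes_{\mathbb R}\mathbb C$, so no eigenvector adaptation is needed. This mirrors Case 2 in the proof of Theorem \ref{thm:os-meninos-vem-como}, but avoids diagonalizability.

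\textbf{Setup.} As in the decomposition preceding Lemma \ref{lemma-integ-equiv_new}, write $\Omega=e^0\wedge\Omega_0+\Omega_{ab}$. Then
\[
\Omega\wedge\overline\Omega=e^0\wedge\Omega'+\Omega_{ab}\wedge\overline{\Omega_{ab}},\qquad
\Omega'=\Omega_0\wedge\overline{\Omega_{ab}}+(-1)^k\Omega_{ab}\wedge\overline{\Omega_0}.
\]
Here $\Omega_{ab}$ spans $\bigwedge^kW$ and $\Omega_0\in\bigwedge^{k-1}W$. Suppose, for contradiction, that $\dim_{\mathbb C}(W\cap\overline W)\ge2$. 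The intersection is stable under conjugation, so it is the complexification of a real subspace and has a basis of real covectors. Choose two linearly independent real covectors $u_1,u_2\in W\cap\overline W$ and complete them to a basis $u_1,u_2,w_3,\dots,w_k$ of $W$. Then $\Omega_{ab}$ is a nonzero multiple of $u_1\wedge u_2\wedge w_3\wedge\cdots\wedge w_k$. Since $u_1,u_2$ are real, $\overline{\Omega_{ab}}$ is likewise divisible by $u_1\wedge u_2$.

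\textbf{Both pieces vanish.} Because $u_1$ divides both $\Omega_{ab}$ and $\overline{\Omega_{ab}}$, we get $\Omega_{ab}\wedge\overline{\Omega_{ab}}=0$. Now take any basis element $\Omega_{\hat\jmath}$ of $\bigwedge^{k-1}W$ built from the basis above. It omits only one basis vector, so it still contains $u_1$ or $u_2$. Hence $\Omega_{\hat\jmath}\wedge\overline{\Omega_{ab}}=0$, and by linearity $\Omega_0\wedge\overline{\Omega_{ab}}=0$. The symmetric argument, using that $u_1,u_2$ divide $\Omega_{ab}$ and that $\overline{\Omega_0}$ lies in $\bigwedge^{k-1}\overline W$ (which has the same real vectors $u_1,u_2$ in a basis), gives $\Omega_{ab}\wedge\overline{\Omega_0}=0$. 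Therefore $\Omega'=0$ and $\Omega\wedge\overline\Omega=0$. This contradicts the non-degeneracy condition $\omega^{n-k}\wedge\Omega\wedge\overline\Omega\neq0$, which proves the lemma.

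\textbf{Main obstacle.} The delicate step is showing $\Omega_0\wedge\overline{\Omega_{ab}}=0$. It rests on a counting fact: every $(k-1)$-fold wedge of basis vectors of $W$ omits only one vector, so it retains $u_1$ or $u_2$. The basis must therefore include both real vectors, and the argument needs $\dim(W\cap\overline W)\ge2$ rather than $\ge1$. For $k=1$ the statement is trivial, since $\dim_{\mathbb C}(W+\overline W)\ge1$.
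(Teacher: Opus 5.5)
Your proof is correct, but it takes a slightly different route from the paper's. The paper argues directly, without contradiction. Since $\Omega_0\in\bigwedge^{k-1}W$ and $\Omega_{ab}$ spans $\bigwedge^{k}W$, both summands of $\Omega'$ lie in $\bigwedge^{2k-1}(W+\overline W)$, and $\Omega_{ab}\wedge\overline{\Omega_{ab}}$ lies in $\bigwedge^{2k}(W+\overline W)$. Non-degeneracy makes one of these nonzero, so $\bigwedge^{2k-1}(W+\overline W)\neq0$, and the bound follows immediately.

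You instead prove the equivalent statement $\dim_{\mathbb C}(W\cap\overline W)\le1$. You pick a basis of $W$ containing two real covectors $u_1,u_2$ and observe that every $(k-1)$-fold wedge of basis vectors keeps $u_1$ or $u_2$, while $\Omega_{ab}$ and $\overline{\Omega_{ab}}$ each contain both. In effect you are checking by hand, in an adapted basis, that a form of degree $2k-1$ supported in a space of dimension at most $2k-2$ vanishes. The paper's support argument gives this for free.

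The paper's version is shorter and needs no basis choice and no separate treatment of $k=1$. Yours makes explicit the link to Case 2 in the proof of Theorem \ref{thm:os-meninos-vem-como}, where the same divisibility argument appears with an eigenbasis. It also shows that the diagonalizability used there plays no role in this step.
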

 \begin{proof}
     Expanding $\Omega \wedge \overline{\Omega}$ with respect to the decomposition
     $\Omega = e^0\wedge \Omega_0 + \Omega_{ab}$ yields, as before,
     $$ \Omega\wedge\overline{\Omega} = e^{0}\wedge\Omega' + \Omega_{ab}\wedge\overline{\Omega_{ab}},
     \qquad \Omega'=\Omega_{0}\wedge\overline{\Omega_{ab}} +
     (-1)^{k}\Omega_{ab}\wedge\overline{\Omega_{0}} .$$
     Since $\Omega_0 \in \bigwedge^{k-1}W$ and $\Omega_{ab}$ spans $\bigwedge^{k}W$, both summands of
     $\Omega'$ lie in $\bigwedge^{2k-1}\left(W+\overline{W}\right)$, while
     $\Omega_{ab}\wedge \overline{\Omega_{ab}}$ lies in
     $\bigwedge^{2k}\left(W+\overline{W}\right)$. By non-degeneracy $\Omega \wedge \overline{\Omega}
     \neq 0$, so at least one of these components is nonzero, forcing
     $\bigwedge^{j}\left(W + \overline{W}\right) \neq 0$ for $j = 2k-1$ or $j=2k$. Either way,
     $2k-1 \le \dim_{\mathbb{C}}\left(W+\overline{W}\right)$.
 \end{proof}

\begin{prop} \label{prop:k-leq-1}
    Let $G_A$ be an almost abelian Lie group with Lie algebra $\mathfrak{g}_A = \mathbb{R} e_0 \ltimes_A \mathfrak{h}$. Assume $ \mathrm{spec}(A) \subset \mathbb{R}$ and every $\lambda \in  \mathrm{spec}(A) $ has geometric multiplicity $1$. If $G_A$ admits a left-invariant generalized complex structure of type $k$, then  $k \leq 1$.
\end{prop}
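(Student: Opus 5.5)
Suppose $G_A$ carries a left-invariant generalized complex structure of type $k\ge 2$; we aim for a contradiction (the case $k\le1$ is what we want). The plan combines Lemma \ref{lemma-integ-equiv_new} with Lemma \ref{lem:dim-W-conj}. By Lemma \ref{lemma-integ-equiv_new}, the subspace $W\subset\mathfrak h^*\otimes_{\mathbb R}\mathbb C$ of dimension $k$ is $A^T$-invariant. Since $A$ is real, so is $A^T$, and hence $\overline W$ is $A^T$-invariant as well. Consequently both $W\cap\overline W$ and $W+\overline W$ are $A^T$-invariant and stable under conjugation.

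The key structural input is that, since $\mathrm{spec}(A)\subset\mathbb R$ and every eigenvalue has geometric multiplicity $1$, the operator $A^T$ is cyclic (non-derogatory) on $\mathfrak h^*\otimes\mathbb C$. Each generalized eigenspace is a single Jordan block, and the invariant subspaces of a single Jordan block $J_m(\lambda)$ form a chain $\ker(A^T-\lambda)^j$, $j=0,\dots,m$. Hence every $A^T$-invariant complex subspace is a direct sum $\bigoplus_\lambda \ker(A^T-\lambda)^{j_\lambda}$, with each summand defined over $\mathbb R$ because $\lambda$ is real. Therefore every invariant subspace is conjugation-stable, and in particular $W=\overline W$.

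With $W=\overline W$, we get $W+\overline W=W$, which has dimension $k$. Lemma \ref{lem:dim-W-conj} then gives $2k-1\le k$, that is, $k\le1$. To double-check the logic, one can also trace the proof of that lemma directly. With $W=\overline W$, any nonzero real vector of $W$ occurs in both $\Omega_{ab}$ and $\overline{\Omega_{ab}}$, so $\Omega_{ab}\wedge\overline{\Omega_{ab}}=0$. Likewise $\Omega'$ lies in $\bigwedge^{2k-1}W$, which vanishes once $2k-1>k$, i.e.\ once $k\ge2$. So $\Omega\wedge\overline\Omega=0$ for $k\ge2$, contradicting non-degeneracy.

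The only step needing care is the classification of invariant subspaces of a non-derogatory operator with real spectrum. To prove it, I would decompose any invariant $W$ along the primary decomposition via the projections, which are polynomials in $A^T$. On each primary component, $A^T-\lambda$ is a single nilpotent Jordan block, and its invariant subspaces are exactly its kernels $\ker(A^T-\lambda)^j$. These kernels are real because both $A^T$ and $\lambda$ are real. This is standard linear algebra, so I expect no genuine obstacle there. The reality of the spectrum is essential: complex eigenvalue pairs would allow invariant subspaces that are not conjugation-stable.
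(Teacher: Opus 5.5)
Your proposal is correct and follows essentially the same route as the paper. Both arguments use Lemma \ref{lemma-integ-equiv_new} to get $A^T$-invariance of $W$, note that real spectrum with one Jordan block per eigenvalue makes every invariant subspace a sum of real kernels $\ker(A^T-\lambda)^{j}$ (hence $W=\overline W$), and then conclude $2k-1\le\dim_{\mathbb C}(W+\overline W)=k$ from Lemma \ref{lem:dim-W-conj}; your direct check that $\Omega\wedge\overline\Omega=0$ for $k\ge2$ just unpacks that lemma.
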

\begin{proof}
    The case $k=0$ is trivial; assume $k \ge 1$. Since $\mathrm{spec}(A) \subset \mathbb{R}$ and
    every eigenvalue has geometric multiplicity $1$, each generalized eigenspace of $A^T$ consists of
    a single Jordan block, and this remains true after complexification. Fix
    such a block $J_{m_i}(\lambda_i)$, with adapted basis $e^1, \ldots, e^{m_i}$, and let
    $N:=A^T-\lambda_i$ on it, so that $N$ is nilpotent with $\dim_{\mathbb C}\ker N = 1$. If
    $U \neq \{0\}$ is $A^T$-invariant, then it is $N$-invariant, and taking $0 \neq u \in U$ with
    $N^{d}u=0 \neq N^{d-1}u$ we get $\ker N \subseteq U$; iterating, $U=\ker N^{\dim_{\mathbb C}U}$.
    Hence the only $A^T$-invariant subspaces inside the block are
    $$ W_{m_i} = \mathrm{span}_{\mathbb{C}}\{e^{m_i-j+1}, \dots, e^{m_i}\},\qquad 1 \le j \le m_i,$$
    which are spanned by real vectors and therefore satisfy $W_{m_i} = \overline{W_{m_i}}$. Every
    $A^T$-invariant subspace $W$ decomposes as
    $W = W_{\lambda_1} \oplus W_{\lambda_2} \oplus \cdots \oplus W_{\lambda_r}$, with each
    $W_{\lambda_i}$ invariant inside the block of $\lambda_i$; consequently $W = \overline{W}$ and
    $\dim_{\mathbb C}\left(W+\overline W\right)=k$. By Lemma \ref{lem:dim-W-conj},
    $$2k-1 \le \dim_{\mathbb{C}}\left(W+\overline{W}\right)=k \implies k\leq 1. $$
\end{proof}
In general, invariant subspaces need not intersect their conjugates trivially, and a coarser estimate
is required. We now establish such a bound for arbitrary Jordan type. We note that Proposition
\ref{prop:k-leq-1} is not a particular case of Theorem \ref{thm:type-obstruction}: under its
hypotheses, the bound of the theorem reads
$k \le \min_\lambda \left(\lfloor (2n-1-m_\lambda)/2 \rfloor + 1\right)$, which is weaker than
$k \le 1$.

\begin{proof}[Proof of Theorem \ref{thm:type-obstruction}]

By Lemma \ref{lem:dim-W-conj}, 
\begin{align} \label{deg}
    2k-1 \le \dim_{\mathbb{C}}(W+\overline{W}).
\end{align}
Fixing a real eigenvalue $\lambda \in \mathrm{spec}(A)$, let $V_\lambda \subset \mathfrak{h}^* \otimes \mathbb{C}$ be its generalized complexified eigenspace and $V^c$ be the sum of all remaining generalized eigenspaces, also complexified, which has dimension $d = 2n - 1 - m_\lambda$. The invariance of $W$ yields
$$W = W_\lambda \oplus W_r,$$ 
where $W_\lambda = W \cap V_\lambda$ and $W_r = W \cap V^c$. Let $k_\lambda = \dim W_\lambda$ and $k_r = \dim W_r$, so that $k = k_\lambda + k_r$. Since the subspaces $V_\lambda$ and
$V^c$ are invariant under complex conjugation,
$$2k-1 \leq \dim(W + \overline{W}) = \dim(W_\lambda + \overline{W}_\lambda) + \dim(W_r + \overline{W}_r).$$We now bound each component independently:
\begin{align} \label{eq:dim_Wr}
    \dim(W_r + \overline{W_r}) = 2k_r - \dim(W_r \cap \overline{W_r}).
\end{align}

Setting $c := k_r - \dim(W_r \cap \overline{W}_r)$, we may rewrite \eqref{eq:dim_Wr} as $\dim(W_r + \overline{W}_r) = k_r + c$. Since $\mathrm{dim} (W_r + \overline{W}_r)$ cannot exceed the total dimension $d$, and $c \leq k_r$, it follows from $2c \leq k_r + c \leq d $ that$$ c \leq \left\lfloor \frac{d}{2} \right\rfloor.$$
Then, 
\begin{equation} \label{bound1}
    \dim(W_r + \overline{W_r}) \leq k_r + \left\lfloor \frac{2n-1-m_{\lambda}}{2} \right\rfloor.
\end{equation}

To establish an upper bound for $\dim(W_\lambda + \overline{W}_\lambda)$, we explicitly construct the $A^T$-invariant subspace $\mathcal{W}_\lambda$ as follows.
Let $\{e^{r,1}, \dots, e^{r,m_r}\}$ denote the real Jordan basis for the $r$-th block ($1 \le r \le s_\lambda$), satisfying 
\begin{align}
    A^T e^{r,i} =& \lambda e^{r,i} + e^{r,i+1} \quad \text{for} \quad i \in \{1, \dots, m_r -1\}\\
    A^T e^{r, m_r} &= \lambda e^{r, m_r}
\end{align}
For each pair of blocks $(m_{2j-1}, m_{2j})$ with $1 \le j \le \lfloor s_\lambda/2 \rfloor$, set $p = m_{2j-1}$ and $q = m_{2j}$. Since $p \ge q$, we can define $q$ complex covectors:$$ \eta^{(j)}_t := e^{2j-1, \, p-q+t} + \mathbf{i}e^{2j, \, t}, \quad 1 \le t \le q. $$

By linearity, applying $A^T$ yields $A^T\eta_t^{(j)} = \lambda\eta_t^{(j)} + \eta_{t+1}^{(j)}$, with $\eta_{q+1}^{(j)} = 0$. As a result, $W_j := \text{span}_{\mathbb{C}}\{\eta_1^{(j)}, \dots, \eta_q^{(j)}\}$ is an $A^T$-invariant subspace of dimension $m_{2j}$. For each $1 \le j \le \lfloor s_\lambda/2 \rfloor$, it follows that $W_j \cap \overline{W}_j = \{0\}$.

We define $\mathcal{W}_\lambda := \bigoplus_{j=1}^{\lfloor s_\lambda/2 \rfloor} W_j$. The intersection remains trivial $\mathcal{W}_\lambda \cap \overline{\mathcal{W}}_\lambda = \{0\}$ and $\dim(\mathcal{W}_\lambda) = \sum_{j=1}^{\lfloor s_\lambda/2 \rfloor} m_{2j}$. Consequently, for this construction, we obtain$$\dim \mathcal{W}_\lambda - \dim(\mathcal{W}_\lambda \cap \overline{\mathcal{W}}_\lambda) = \sum_{j=1}^{\lfloor s_\lambda/2 \rfloor} m_{2j}.$$
In Appendix \ref{appendix:A} we prove that $\mathcal{W}_\lambda$ maximizes $\dim W_\lambda -\dim (W_\lambda \cap \overline{W}_\lambda)$ among all possible invariant subspaces.  
Therefore, for any $A^T$-invariant subspace $W_\lambda$, we have
\begin{align} \label{bound2}
    \dim({W}_\lambda +\overline{{W}_\lambda}) \leq {k}_\lambda + \sum_{j=1}^{\lfloor s_\lambda/2 \rfloor} m_{2j}
\end{align}

Equations \eqref{bound1} and \eqref{bound2} allow us to then write
  $$\dim(W + \overline{W}) \le k + \sum_{j=1}^{\lfloor s_{\lambda}/2 \rfloor} m_{2j}^{(\lambda)} + \left\lfloor\frac{2n-1-m_{\lambda}}{2}\right\rfloor$$
By \eqref{deg}, 
$$k \le \sum_{j=1}^{\left\lfloor s_{\lambda}/2 \right\rfloor} m_{2j}^{(\lambda)} + \left\lfloor\frac{2n-1-m_{\lambda}}{2}\right\rfloor + 1,$$
which holds for every $\lambda\in\mathrm{spec}(A)\cap\mathbb R$. 
\end{proof}

Moreover, it is immediate to observe that in the nilpotent case, where $\lambda=0$ is the only
eigenvalue and we drop it from the notation, $m_\lambda=2n-1$ and the bound reads
$$k \leq \sum_{j=1}^{\lfloor s/2 \rfloor} m_{2j} + 1.$$
The nilpotent case is also distinguished for the following reason, as first established in \cite{Cavalcanti2004}.

\begin{cor}\label{cor:nilpotent-CY}
    Let $G_A$ be an almost abelian Lie group with $A$ nilpotent. Then every left-invariant generalized
    complex structure on $G_A$ is generalized Calabi--Yau.
\end{cor}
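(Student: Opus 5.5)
The plan is to combine Theorem \ref{thm-Calabi-Yau} with the integrability criterion of Lemma \ref{lemma-integ-equiv_new}. The key observation is that closedness of the invariant generator is controlled by a single number,
\[
\Lambda=\tr\left(A^T|_W\right).
\]
Nilpotency of $A$ should force this trace to vanish.

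First, I would fix an arbitrary left-invariant generalized complex structure on $G_A$ and choose a left-invariant non-degenerate pure spinor generator $\rho=\exp(\beta)\wedge\Omega$. Such a global invariant trivialization exists, as explained in Subsection \ref{sec:gce-aasm}.

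For type $k=0$, normalize $\Omega=1$. Integrability then reads $\d\beta=0$, so $\d\rho=\d\beta\wedge\rho=0$ directly, as already noted before Lemma \ref{lemma-integ-equiv_new}. In this case there is nothing to prove.

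For $k\ge1$, Lemma \ref{lemma-integ-equiv_new} gives that the subspace $W=\mathrm{span}_{\mathbb C}\{\eta_1,\dots,\eta_k\}\subset\mathfrak h^*\otimes_{\mathbb R}\mathbb C$ is $A^T$-invariant, and that
\[
\Psi(\Omega_{ab})=\tr\left(A^T|_W\right)\Omega_{ab}.
\]
Since $A$ is nilpotent, so is $A^T$ and hence its restriction $A^T|_W$ to an invariant subspace. All eigenvalues of $A^T|_W$ are therefore zero, and $\Lambda=\tr(A^T|_W)=0$.

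Theorem \ref{thm-Calabi-Yau} then yields $\d\rho=-\Lambda e^0\wedge\rho=0$. So $\rho$ is a closed, left-invariant, nowhere-vanishing generator of the canonical line. This is exactly an invariant generalized Calabi--Yau structure in the sense fixed in the paper, and it descends to any quotient $\Gamma\backslash G_A$.

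There is no genuine obstacle. The only point needing care is to make sure the generator used is the invariant one, so that Theorem \ref{thm-Calabi-Yau} applies. As noted in that proof, any two invariant generators differ by a nonzero constant, so closedness does not depend on this choice.
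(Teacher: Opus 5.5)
Your proposal is correct and follows essentially the paper's own proof: Lemma~\ref{lemma-integ-equiv_new} gives the $A^T$-invariance of $W$, nilpotency of $A^T|_W$ gives $\Lambda=\tr(A^T|_W)=0$, and Theorem~\ref{thm-Calabi-Yau} then gives $\d\rho=0$. Your separate treatment of type $k=0$ is a harmless addition. The paper covers that case implicitly through its convention $\Omega=1$, $W=0$.
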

\begin{proof}
      Let $\rho=\exp(\beta)\wedge\Omega$ be a non-degenerate pure spinor defining such a structure. By Lemma  \ref{lemma-integ-equiv_new}, its associated subspace $W$ is $A^T$-invariant. Since $A$ is nilpotent, $A^T|_W$ is nilpotent, yielding $\tr(A^T|_W) = 0$. Theorem \ref{thm-Calabi-Yau} completes the proof. 
\end{proof}

\begin{example} \label{ex:jordan}
    Let $n \ge 2$ and let $G_A$ be an almost abelian Lie group with $A=J_{2n-1}(\lambda)$, a single
    real Jordan block of maximal size, which we take with $1$'s on the superdiagonal, so that
    $A^Te^p=\lambda e^p+e^{p+1}$ with the convention $e^{2n}=0$. By Proposition \ref{prop:k-leq-1},
    the type of any left-invariant generalized complex structure on $G_A$ satisfies $k\leq 1$.

    Type $0$, that is, symplectic type, is known to be attained if and only if $\lambda = 0$
    \cite{Arroyo2025}. We determine exactly when type $1$ is realized.

    Since $\ker\left(A^T-\lambda\right)=\mathrm{span}_{\mathbb C}\{e^{2n-1}\}$ is the unique
    $A^T$-invariant line, a type $1$ pure spinor has, up to scale, $\Omega=z\,e^0+e^{2n-1}$ for some
    $z\in\mathbb C$, and $\Omega\wedge\overline\Omega=(z-\bar z)\,e^0\wedge e^{2n-1}$, so that
    non-degeneracy forces $z\notin\mathbb R$. Neither condition below depends on $z$. First,
    $\omega^{n-1}\wedge\Omega\wedge\overline\Omega\ne0$ amounts, after wedging with
    $e^0 \wedge e^{2n-1}$, to the non-degeneracy of $\omega$ restricted to
    $V:=\mathrm{span}_{\mathbb R}\{e_1,\dots,e_{2n-2}\}$, that is,
    $$ \det[\omega]_V\ne0,$$
    where $[\omega]_V$ denotes the matrix of that restriction; in particular $\omega_{1m}\ne0$ for
    some $2 \le m \le 2n-2$. Second, items (a) and (b) of Lemma \ref{lem:integ_equiv} hold with $X=0$
    and $\xi=-\lambda e^0$, since $\d\Omega=-\lambda e^0\wedge e^{2n-1}=\xi\wedge\Omega$; hence
    integrability reduces to item (c), $\d\beta\wedge\Omega=0$, that is, to
    $$e^{2n-1}\wedge \d\omega=0\qquad\text{and}\qquad e^{2n-1}\wedge \d B=0 .$$
    Writing $\omega_{ab}=\sum_{1 \le i<j \le 2n-1}\omega_{ij}e^i\wedge e^j$ and using
    $\Psi(e^i\wedge e^j)=2\lambda e^i\wedge e^j+e^{i+1}\wedge e^j+e^i\wedge e^{j+1}$, the first
    equation says that the restriction of $\Psi(\omega_{ab})$ to $V$ vanishes; its
    $e^1\wedge e^m$ coefficient yields
    \begin{align} \label{recur}
        2\lambda\,\omega_{1m}+\omega_{1,m-1}=0,\qquad m=2,\dots,2n-2.
    \end{align}
    If $\lambda\ne0$, then, since $\omega_{11}=0$, \eqref{recur} forces $\omega_{1m}=0$ for every
    $m\le 2n-2$ by induction, so that the first row of $[\omega]_V$ vanishes, contradicting
    $\det[\omega]_V\ne0$.

    If $\lambda=0$, then \eqref{recur} only forces $\omega_{1,m-1}=0$ for $m\le2n-2$, leaving
    $\omega_{1,2n-2}$ free. Accordingly, take $\Omega=e^0+\mathbf{i}e^{2n-1}$ and
    $\beta=B+\mathbf{i}\omega$ with $B = 0$ and
    $$\omega=\sum_{j=1}^{n-1}(-1)^{j-1}e^j\wedge e^{2n-1-j},$$
    populating the anti-diagonal of $[\omega]_V$, so $\det[\omega]_V\ne0$. A telescoping computation
    gives $\Psi(\omega)=e^1\wedge e^{2n-1}$, whence $\d\omega=-e^0\wedge e^1\wedge e^{2n-1}$ and
    $$\d\omega\wedge\Omega=\left(-e^0\wedge e^1\wedge e^{2n-1}\right)\wedge
    \left(e^0+\mathbf{i}e^{2n-1}\right)=0,$$
    so that integrability holds. Therefore $\rho = \exp(\mathbf{i}\omega) \wedge \Omega$ defines a
    left-invariant generalized complex structure of type $1$. Moreover $\d\Omega=0$, hence
    $\d\rho = 0$ and the structure is generalized Calabi--Yau, in accordance with Corollary
    \ref{cor:nilpotent-CY}. Therefore type $1$ is also attained on $J_{2n-1}(\lambda)$ if and only if
    $\lambda=0$.

We remark that hypothesis $n\ge2$ is necessary: for $n=1$ and $\lambda\ne0$ one has
    $\mathfrak g_A\cong\mathfrak{aff}(\mathbb R)$, which carries a left-invariant complex structure,
    that is, one of type $n=1$.
\end{example}

\bigskip

\section{Nonexistence results and constructions in dimension 6}
\label{sec:dimension-6}

We study intermediate-type generalized complex structures on
six-dimensional almost abelian Lie groups that admit neither a
left-invariant complex structure nor a left-invariant symplectic
structure. This restriction distinguishes the problem from the
four-dimensional solvable case: by \cite{Barberis}*{Theorem 4.7},
a four-dimensional solvable Lie group admits a left-invariant generalized
complex structure if and only if it admits a left-invariant complex or
symplectic structure. Intermediate type can occur in dimension four,
but not in the absence of both of these classical structures.

Throughout this section, let
\[
\mathfrak g_A=\mathbb R e_0\ltimes_A\mathfrak h,
\qquad \mathfrak h=\mathbb R^5,
\]
and assume that $\mathfrak g_A$ admits neither a complex nor a symplectic
structure. Since types $0$ and $3$ are thereby excluded, the remaining
question is whether types $1$ and $2$ occur. We first identify the Jordan
patterns compatible with this assumption. We then settle the diagonal
case, prove nonexistence for three non-diagonal patterns, and construct
explicit structures for several of the remaining parameter families.
The conditions attached to these constructions are sufficient; a
necessity statement will be made only in the diagonal case.

\subsection{Jordan patterns and conventions}

By Proposition \ref{Conjugate}, we may choose $A$ in real Jordan form.
Write $J_r(t)$ for an upper Jordan block with real eigenvalue $t$ and
ones on the superdiagonal, and set
\[
C(a,b)=\begin{pmatrix}a&-b\\ b&a\end{pmatrix},\qquad b\ne0.
\]
A scalar summand denotes a $1\times1$ block. With respect to the dual
basis $e^0,\ldots,e^5$, a real Jordan chain satisfies
$A^Te^j=t e^j+e^{j+1}$, except at its last covector, where the second
term is absent. We write $e^{i_1\cdots i_r}$ for
$e^{i_1}\wedge\cdots\wedge e^{i_r}$.

Applying the complex and symplectic existence criteria of
\cite{Arroyo2025} leaves the following ten parameterized patterns.
All eigenvalue lists below are \emph{multisets}. Two identical pairs
mean four entries that can be grouped as $\{r,r\}$ and $\{s,s\}$;
two opposite pairs mean four entries grouped as $\{r,-r\}$ and
$\{s,-s\}$. The pairs must use disjoint occurrences, even when some
values coincide; in particular, two zero entries form an opposite pair.

\begin{enumerate}[label=(\arabic*)]
\item $J_5(\lambda)$, with $\lambda\ne0$.
\item $J_4(\lambda)\oplus\mu$, with $\lambda\ne0$.
\item $J_3(\lambda)\oplus J_2(\mu)$, with $\lambda\ne\pm\mu$.
\item $J_3(\lambda)\oplus\mu\oplus\nu$, with
      $\lambda\ne0$ or $\mu\ne-\nu$.
\item $J_2(\lambda)\oplus J_2(\mu)\oplus\nu$, with
      $\lambda\ne\pm\mu$ and
      $\{\lambda,\mu,\nu\}\ne\{0,-\alpha,\alpha\}$
      for every $\alpha\in\mathbb R$.
\item $J_2(\lambda)\oplus\mu\oplus\nu\oplus\gamma$, where
      $\{\lambda,\mu,\nu,\gamma\}$ contains neither two identical
      pairs nor two opposite pairs. If $\lambda=0$, the multiset
      $\{\mu,\nu,\gamma\}$ must also contain no opposite pair.
\item $C(a,b)\oplus J_3(\lambda)$, with $(a,\lambda)\ne(0,0)$.
\item $C(a,b)\oplus J_2(\lambda)\oplus\mu$, with $\lambda\ne\mu$.
      If $a=0$, also require $\lambda\ne-\mu$ and $\lambda\ne0$.
\item $C(a,b)\oplus\lambda\oplus\mu\oplus\nu$, where
      $\{\lambda,\mu,\nu\}$ contains no identical pair.
      If $a=0$, it must also contain no opposite pair.
\item $\operatorname{diag}(\lambda_1,\ldots,\lambda_5)$, whose
      spectrum contains neither two identical pairs nor two opposite pairs.
\end{enumerate}

For completeness, a real $5\times5$ Jordan matrix has either only real
blocks, one nonreal conjugate pair together with three real dimensions,
or four nonreal dimensions together with one real eigenvalue. The
seven partitions of the real dimension five give patterns (1)--(6)
and (10); the three partitions of the remaining real dimension three
give patterns (7)--(9). In the last possibility, the four-dimensional
nonreal part admits a commuting complex structure and is an invariant
hyperplane in $\mathfrak h$, so the corresponding almost abelian Lie
algebra admits a complex structure and is excluded. The restrictions
above remove the complex and symplectic cases within the ten remaining
patterns. These are families of Jordan forms, not a list of ten Lie
algebra isomorphism classes.

\subsection{The diagonal case}

For pattern (10), the general spectral criterion from
Section \ref{sec:gcs-introduction-and-everyting} gives a complete answer.

\begin{prop}\label{prop:diag6}
Suppose that $A$ is diagonal and its spectrum contains neither two
identical pairs nor two opposite pairs. The following are equivalent:
\begin{enumerate}[label=(\alph*)]
\item $G_A$ admits a left-invariant generalized complex structure of type $1$;
\item $G_A$ admits a left-invariant generalized complex structure of type $2$;
\item the spectrum can be reordered as
\[
\operatorname{spec}(A)=\{\lambda,\lambda,\mu,\zeta,-\zeta\}.
\]
\end{enumerate}
\end{prop}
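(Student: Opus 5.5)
The plan is to specialize Theorem \ref{thm:os-meninos-vem-como} to $n=3$ and read off the admissible forms for $k=1$ and $k=2$. The multiset $\mathrm{spec}(A)$ has $5$ elements.

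For $k=1$, form (a) requires one identical pair, the singleton $\mu$, and $n-k-1=1$ opposite pair, i.e. $\{\lambda,\lambda,\mu,\zeta,-\zeta\}$. Form (b) requires $k-1=0$ identical pairs, $\mu$, and $n-k=2$ opposite pairs, i.e. $\{\mu,\zeta_1,-\zeta_1,\zeta_2,-\zeta_2\}$.

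For $k=2$, form (a) requires two identical pairs, $\mu$, and no opposite pairs. Form (b) requires one identical pair, $\mu$, and one opposite pair, which is again $\{\lambda,\lambda,\mu,\zeta,-\zeta\}$.

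The standing hypothesis of Proposition \ref{prop:diag6} says that the spectrum contains neither two identical pairs nor two opposite pairs, with pairs using disjoint occurrences as in the conventions of this section. This hypothesis excludes exactly the form (b) for $k=1$ and the form (a) for $k=2$. These are also precisely the spectra yielding symplectic and complex structures via Theorem \ref{Sym_Com_Diag}, so the hypothesis is just pattern (10).

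Hence, under the hypothesis, Theorem \ref{thm:os-meninos-vem-como} gives: type $1$ exists if and only if $\mathrm{spec}(A)$ can be reordered as $\{\lambda,\lambda,\mu,\zeta,-\zeta\}$, and type $2$ exists if and only if the same reordering is possible. This yields (a)$\Leftrightarrow$(c) and (b)$\Leftrightarrow$(c) directly, and therefore all three statements are equivalent. This is consistent with Corollary \ref{cor:adjacent-types}, since (a) for $k=1$ and (b) for $k=2$ describe the same multiset.

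The only point needing care is the bookkeeping of degenerate coincidences. Examples are $\lambda=\pm\zeta$, $\zeta=0$, or $\mu$ equal to one of the other values, where the same multiset might admit several groupings. One must check that the hypothesis is applied to the multiset itself and not to a chosen grouping. Since Theorem \ref{thm:os-meninos-vem-como} is an existence statement over reorderings, this causes no trouble: if some reordering has the form $\{\lambda,\lambda,\mu,\zeta,-\zeta\}$, the corresponding construction in its proof works regardless of further coincidences. Conversely, the hypothesis only removes the other two forms. No step is a real obstacle; the proof is a direct corollary.
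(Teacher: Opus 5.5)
Your proposal is correct and matches the paper's proof: specialize Theorem \ref{thm:os-meninos-vem-como} to $n=3$. The hypothesis removes the two-opposite-pairs alternative at type $1$ and the two-identical-pairs alternative at type $2$, and in both cases the surviving alternative is exactly the spectrum in (c).
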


\begin{proof}
Apply Theorem \ref{thm:os-meninos-vem-como} with $n=3$.
For type $1$, its two alternatives are one identical pair and one
opposite pair, or two opposite pairs. The latter is excluded by
hypothesis. For type $2$, the alternatives are two identical pairs,
or one identical pair and one opposite pair. The former is excluded.
Thus both types have exactly the spectral condition in (c).
\end{proof}

\begin{cor}\label{sec3:diag-closed}
Under the hypotheses of Proposition \ref{prop:diag6}, no left-invariant
generalized complex structure of type $1$ or $2$ has a closed invariant
pure-spinor generator.
\end{cor}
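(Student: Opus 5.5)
The plan is to read off the answer from Corollary \ref{cor-CY-diag} with $n=3$ and show that each trace condition it imposes creates a forbidden spectral pattern. Suppose $G_A$ admits a left-invariant generalized complex structure of type $k\in\{1,2\}$ with a closed invariant pure-spinor generator. By Corollary \ref{cor-CY-diag}, the multiset $\mathrm{spec}(A)$ then admits some reordering of form (a) or (b) for this $k$, together with the corresponding linear condition ($\sum_{i=1}^k\lambda_i=0$ in form (a), $\mu+\sum_{i=1}^{k-1}\lambda_i=0$ in form (b)). I will check the four resulting possibilities one at a time. In each I aim to exhibit either two identical pairs or two opposite pairs using disjoint occurrences, contradicting the hypothesis. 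Since the argument applies to any witnessing reordering, this suffices.

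\emph{Type $1$, form (b).} This has no identical pair, the entry $\mu$, and two opposite pairs. It is excluded by hypothesis, regardless of the condition $\mu=0$.

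\emph{Type $1$, form (a).} The spectrum is $\{\lambda_1,\lambda_1,\mu,\zeta_1,-\zeta_1\}$ and the closedness condition reads $\lambda_1=0$. The two zero entries then form an opposite pair $\{0,-0\}$. This pair is disjoint from $\{\zeta_1,-\zeta_1\}$, giving two opposite pairs, a contradiction.

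\emph{Type $2$, form (a).} This requires two identical pairs and is excluded outright.

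\emph{Type $2$, form (b).} The spectrum is $\{\lambda_1,\lambda_1,\mu,\zeta_1,-\zeta_1\}$ with $\mu+\lambda_1=0$. Then one occurrence of $\lambda_1$ together with $\mu=-\lambda_1$ forms an opposite pair. This pair is disjoint from $\{\zeta_1,-\zeta_1\}$, again giving two opposite pairs.

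The argument is essentially bookkeeping; the only point needing care is the convention, fixed in the list of patterns, that pairs use disjoint occurrences and that two zero entries count as an opposite pair. With that convention, both the $\lambda_1=0$ case and the $\mu=-\lambda_1$ case produce the forbidden second opposite pair, so no closed invariant generator exists in types $1$ or $2$.
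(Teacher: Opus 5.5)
Your proposal is correct and follows the paper's proof. Both apply Corollary \ref{cor-CY-diag} with $n=3$ and observe that the closure conditions ($\lambda=0$ at type $1$, $\mu=-\lambda$ at type $2$) each create a second opposite pair disjoint from $\{\zeta,-\zeta\}$; the only cosmetic difference is that the paper first uses Proposition \ref{prop:diag6}(c) to discard the forms with two identical or two opposite pairs, whereas you discard them directly in your four-case check.
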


\begin{proof}
By Corollary \ref{cor-CY-diag}, closure at type $1$ requires a reordering
as in Proposition \ref{prop:diag6}(c) with $\lambda=0$. This produces
the two opposite pairs $\{0,0\}$ and $\{\zeta,-\zeta\}$.
At type $2$, closure requires such a reordering with $\mu=-\lambda$,
producing the opposite pairs $\{\lambda,\mu\}$ and
$\{\zeta,-\zeta\}$. Both possibilities contradict the hypothesis.
\end{proof}

\subsection{Obstructions for three non-diagonal patterns}

The diagonal criterion depends only on spectral pairings. In the
non-diagonal case, the invariant subspaces and the nilpotent parts of
the Jordan blocks impose additional restrictions. For patterns
(1)--(3), these restrictions rule out both intermediate types.

\begin{prop}\label{prop:nonexistence}
Let $G_A$ have Lie algebra $\mathfrak g_A=\mathbb R e_0\ltimes_A\mathbb R^5$.
If $A$ has one of the following forms, then $G_A$ admits no
left-invariant generalized complex structure:
\begin{enumerate}[label=(\alph*)]
\item $J_5(\lambda)$, with $\lambda\ne0$;
\item $J_4(\lambda)\oplus\mu$, with $\lambda\ne0$;
\item $J_3(\lambda)\oplus J_2(\mu)$, with $\lambda\ne\pm\mu$.
\end{enumerate}
In particular, none admits a left-invariant generalized Calabi--Yau structure.
\end{prop}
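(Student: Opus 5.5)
Since types $0$ and $3$ are already excluded in all three cases (they are among the patterns obtained by removing complex and symplectic cases via \cite{Arroyo2025}), I will only rule out types $k=1$ and $k=2$. The first reduction uses Lemma \ref{lemma-integ-equiv_new} and Lemma \ref{lem:dim-W-conj}. The subspace $W$ must be $A^T$-invariant of dimension $k$ and satisfy $\dim(W+\overline W)\ge 2k-1$.

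In cases (a) and (b) every eigenvalue is real of geometric multiplicity one, unless $\mu=\lambda$ in (b). Proposition \ref{prop:k-leq-1} then gives $k\le1$. In case (c), $\lambda\ne\mu$, so again every eigenvalue has geometric multiplicity one and $k\le1$. In (b) with $\mu=\lambda$, the eigenspace is two-dimensional. A type $2$ structure would need $W$ with $W\cap\overline W=0$ or of dimension one, built from the two chains. Non-real two-dimensional invariant subspaces inside $J_4(\lambda)\oplus J_1(\lambda)$ are spanned by $e^4+z e^5$ together with $e^3$ plus corrections. I will check directly, as in the proof of Theorem \ref{thm:type-obstruction} (sum $m_2=1$, bound $k\le 2$), that type $2$ is then handled by the same coefficient argument as type $1$ below. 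So the heart of the proof is excluding type $1$.

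For type $1$, I follow Example \ref{ex:jordan}. An invariant line is spanned by a real eigencovector $e^p$: $e^5$ in (a); $e^4$ or $e^5$ in (b); $e^3$ or $e^5$ in (c), with the chains $e^1,e^2,e^3$ for $\lambda$ and $e^4,e^5$ for $\mu$. I take $\Omega=ze^0+e^p$ with $z\notin\mathbb R$. Non-degeneracy says that $\omega$ restricted to the complementary $4$-dimensional space $V=\ker e^0\cap\ker e^p$ is non-degenerate. Integrability reduces to $e^p\wedge\Psi(\omega_{ab})=0$, because $W$ is automatically invariant. So the restriction of $\Psi(\omega_{ab})$ to $V$ must vanish. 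I then write $\Psi(e^{ij})$ using $A^Te^j=te^j+e^{j+1}$ and extract linear equations on the coefficients $\omega_{ij}$, $i,j\ne p$, starting from the corners of the chains.

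The main work, and the expected obstacle, is case (c). Each equation has the form $(t_i+t_j)\omega_{ij}+\omega_{i-1,j}+\omega_{i,j-1}=0$, with terms set to zero when the index leaves a chain or equals $p$. These form a triangular system. Solving it from the chain heads, the hypotheses $\lambda\ne0$, $\lambda\ne\pm\mu$ (and, in (b), $\lambda\ne0$ with the extra $\mu$) ensure that the needed diagonal coefficients $2\lambda$, $\lambda+\mu$ are nonzero, and possibly also $2\mu$. I expect the solution to show that the row of $[\omega]_V$ belonging to the chain head $e^1$ vanishes, or to give a similar rank drop, contradicting $\det[\omega]_V\ne0$. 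The delicate subcases are those where $\mu=0$ or $\mu$ equals the relevant sum, since a coefficient $2\mu$ may vanish. There I will instead use the $\lambda$-chain head row, whose coefficients are $2\lambda$ and $\lambda\pm\mu$, and which stay nonzero by hypothesis. I will carry out this check for each choice of $p$. Finally, nonexistence of any structure implies nonexistence of generalized Calabi--Yau ones.
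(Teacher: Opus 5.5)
Your overall strategy matches the paper's: reduce to an $A^T$-invariant $W$, exclude type $2$ via the bound on $\dim(W+\overline W)$, then play the triangular system from $\d\beta\wedge\Omega=0$ against non-degeneracy. There is, however, a genuine gap in case (b) with $\mu=\lambda$. You flag this case but never treat it.

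\emph{Type $1$.} There $\ker(A^T-\lambda)=\operatorname{span}_{\mathbb C}\{e^4,e^5\}$, so every line $\mathbb C(e^4+ce^5)$, $c\in\mathbb C$, is invariant, not only $\mathbb Ce^4$ and $\mathbb Ce^5$. Real $c\neq0$ can be absorbed by a Jordan-preserving change of basis, but non-real $c$ cannot, and for such $c$ both pillars of your type-$1$ argument fail:
\begin{itemize}
\item $\Psi(\beta_{ab})\wedge(e^4+ce^5)=0$ no longer separates into a condition on $\omega$ alone;
\item non-degeneracy is no longer non-degeneracy of $\omega$ on $\ker e^0\cap\ker e^p$. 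Indeed $\Omega\wedge\bar\Omega$ acquires the component $(\bar c-c)e^{45}$, so $z$ may even be real, and the complementary Pfaffian along $e^{0123}$ involves the coefficients $\omega_{0j}$.
\end{itemize}
The missing step is to wedge the integrability equation with the real form $e^5$ before taking imaginary parts. This gives $\d\omega\wedge e^{45}=0$, hence $2\lambda\omega_{12}=\omega_{12}+2\lambda\omega_{13}=\omega_{13}+2\lambda\omega_{23}=0$. So $\omega$ vanishes on $\operatorname{span}\{e_1,e_2,e_3\}$, and all three Pfaffians complementary to $e^{04},e^{05},e^{45}$ vanish.

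\emph{Type $2$.} This version of the subcase is only promised, and your description of the invariant planes is wrong. If $N=A^T-\lambda$ is nonzero on $W$, then $W\subset\ker N^2$ and $N(\ker N^2)=\mathbb Ce^4$. Hence $e^4\in W$ and $W=\operatorname{span}_{\mathbb C}\{e^4,e^3+ce^5\}$ with $\operatorname{Im}c\neq0$. Now $\Omega\wedge\bar\Omega$ is a multiple of $e^{0345}$, which forces $\omega_{12}\neq0$. On the other hand, wedging integrability with $e^5$ and taking imaginary parts gives $e^{345}\wedge\d\omega=2\lambda\omega_{12}\,e^{012345}=0$. Your real-$e^p$ mechanism does not produce this identity by itself.

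Separately, your coefficient bookkeeping assumes non-vanishing that the hypotheses do not supply.
\begin{itemize}
\item In (c) only $\lambda\neq\pm\mu$ is assumed, so $\lambda=0$ is allowed and $2\lambda$ is not ``nonzero by hypothesis''. For $p=3$ one gets $\omega|_V=\omega_{12}e^{12}+\omega_{45}e^{45}$ with $\lambda\omega_{12}=\mu\omega_{45}=0$. When $\lambda=0$ the $e^1$-row of $[\omega]_V$ is $(\omega_{12},0,0)$ with $\omega_{12}$ free, so the contradiction must come from the $\mu$-chain, namely $2\mu\omega_{45}=0$ with $\mu\neq0$.
\item In (b), $\mu=-\lambda$ is allowed, so $\lambda+\mu$ may vanish. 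For $p=4$ the $e^1$-row need not vanish. The correct conclusion is that $\omega$ vanishes on the three-dimensional subspace $\operatorname{span}\{e_1,e_2,e_3\}$ of the four-dimensional $V$, which no non-degenerate form on $V$ allows.
\end{itemize}
These points are repairable, but as written your case analysis does not go through.
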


\begin{proof}
The parameter restrictions exclude types $0$ and $3$, so it remains
to exclude types $1$ and $2$. Write a hypothetical invariant generator as
\[
\rho=\exp(\beta)\wedge\Omega,\qquad \beta=B+\mathbf i\omega,
\qquad B,\omega\in\bigwedge^2\mathfrak g_A^*.
\]
Let $W\subset\mathfrak h^*\otimes\mathbb C$ be the span of the
restrictions to $\mathfrak h$ of the one-form factors of $\Omega$.
By Lemma \ref{lemma-integ-equiv_new}, $W$ is $A^T$-invariant and has
dimension equal to the type. We use the integrability equation
$\d\beta\wedge\Omega=0$ from Lemma \ref{lem:integ_equiv}, together
with the non-degeneracy condition
$\omega^{3-k}\wedge\Omega\wedge\bar\Omega\ne0$.

\emph{Case (a).} This is the case $n=3$ of Example \ref{ex:jordan}.
Indeed, the unique invariant line is $\mathbb C e^5$, so a type $1$
structure would have $\Omega=ze^0+e^5$. The equation
$\d\omega\wedge e^5=0$ forces the component of $\omega$ in
$\bigwedge^2\operatorname{span}\{e^1,e^2,e^3,e^4\}$ to vanish:
the induced derivation there is $2\lambda I$ plus a nilpotent operator,
and hence is invertible. This contradicts non-degeneracy.
The unique invariant two-plane is
$\operatorname{span}_{\mathbb C}\{e^4,e^5\}$, which is real, so
Lemma \ref{lem:dim-W-conj} excludes type $2$.

\emph{Case (b).} The structure equations are
\begin{equation}\label{eq:MC}
\begin{gathered}
\d e^0=0,\qquad
\d e^1=-\lambda e^{01}-e^{02},\qquad
\d e^2=-\lambda e^{02}-e^{03},\\
\d e^3=-\lambda e^{03}-e^{04},\qquad
\d e^4=-\lambda e^{04},\qquad \d e^5=-\mu e^{05}.
\end{gathered}
\end{equation}
The coefficients needed below follow from
\begin{equation}\label{eq:d2.4.1}
\begin{aligned}
\d e^{12}&=-2\lambda e^{012}-e^{013},\\
\d e^{13}&=-2\lambda e^{013}-e^{023}-e^{014},\\
\d e^{23}&=-2\lambda e^{023}-e^{024}.
\end{aligned}
\end{equation}
Write $\omega=\sum_{i<j}\omega_{ij}e^{ij}$. No other
$\d e^{ij}$ contributes to $e^{012}$, $e^{013}$, or $e^{023}$.
Consequently, for any $\gamma\in\{e^4,e^5,e^{45}\}$, the equation
$\d\omega\wedge\gamma=0$ implies
\begin{equation}\label{type1system}
2\lambda\omega_{12}=0,\qquad
\omega_{12}+2\lambda\omega_{13}=0,\qquad
\omega_{13}+2\lambda\omega_{23}=0.
\end{equation}
Since $\lambda\ne0$, all three coefficients
$\omega_{12},\omega_{13},\omega_{23}$ vanish.

For type $1$, the possibilities for $W$ are $\mathbb C e^4$,
$\mathbb C e^5$, and, when $\lambda=\mu$,
$\mathbb C(e^4+c e^5)$ with $c\in\mathbb C$.
Thus, up to a nonzero scalar, $\Omega=ze^0+u$, where $u$ is one of
these generators. Since every invariant exterior derivative has an
$e^0$ factor, the first two choices give
$\d\omega\wedge e^4=0$ or $\d\omega\wedge e^5=0$.
For the mixed choice, wedge
$\d\beta\wedge(e^4+c e^5)=0$ with $e^5$ before taking imaginary
parts; this gives $\d\omega\wedge e^{45}=0$.
In every case, \eqref{type1system} applies.
Now $\Omega\wedge\bar\Omega$ lies in the span of
$e^{04},e^{05},e^{45}$. The complementary coefficients of
$\omega^2/2$, along $e^{1235},e^{1234},e^{0123}$ respectively, are
\[
\begin{aligned}
&\omega_{12}\omega_{35}-\omega_{13}\omega_{25}+\omega_{15}\omega_{23},\\
&\omega_{12}\omega_{34}-\omega_{13}\omega_{24}+\omega_{14}\omega_{23},\\
&\omega_{01}\omega_{23}-\omega_{02}\omega_{13}+\omega_{03}\omega_{12}.
\end{aligned}
\]
All three vanish, contradicting non-degeneracy.

For type $2$, first suppose $\lambda\ne\mu$. The invariant plane
splits along the generalized eigenspaces and is either
$\operatorname{span}_{\mathbb C}\{e^3,e^4\}$ or
$\operatorname{span}_{\mathbb C}\{e^4,e^5\}$.
Both are real, contrary to Lemma \ref{lem:dim-W-conj}.
If $\lambda=\mu$, put $N=A^T-\lambda I$.
When $N|_W=0$, one has
$W=\ker N=\operatorname{span}_{\mathbb C}\{e^4,e^5\}$, again real.
Otherwise $N|_W$ has one Jordan block of size two.
Since $W\subset\ker N^2=\operatorname{span}_{\mathbb C}\{e^3,e^4,e^5\}$,
its image under $N$ is $\mathbb C e^4$; after choosing a basis of $W$,
\[
W=\operatorname{span}_{\mathbb C}\{e^4,e^3+c e^5\},\qquad
\Omega=(z_1e^0+e^4)\wedge(z_2e^0+e^3+c e^5).
\]
A direct expansion gives
\[
\Omega\wedge\bar\Omega=(z_1-\bar z_1)(\bar c-c)e^{0345}.
\]
Non-degeneracy therefore requires
$\operatorname{Im}z_1\ne0$, $\operatorname{Im}c\ne0$, and
$\omega_{12}\ne0$. However, wedging
$\d\beta\wedge e^4\wedge(e^3+c e^5)=0$ with $e^5$ and taking
imaginary parts gives $e^{345}\wedge\d\omega=0$, whereas
\[
e^{345}\wedge\d\omega=2\lambda\omega_{12}e^{012345}.
\]
This is a contradiction.

\emph{Case (c).} Here the structure equations are
\begin{equation}\label{eq:MC2}
\begin{gathered}
\d e^0=0,\qquad
\d e^1=-\lambda e^{01}-e^{02},\qquad
\d e^2=-\lambda e^{02}-e^{03},\\
\d e^3=-\lambda e^{03},\qquad
\d e^4=-\mu e^{04}-e^{05},\qquad
\d e^5=-\mu e^{05}.
\end{gathered}
\end{equation}
In particular,
\begin{equation}\label{eq:d2.3.2}
\begin{aligned}
\d e^{12}&=-2\lambda e^{012}-e^{013},&
\d e^{13}&=-2\lambda e^{013}-e^{023},\\
\d e^{45}&=-2\mu e^{045},&
\d e^{14}&=-(\lambda+\mu)e^{014}-e^{024}-e^{015},\\
\d e^{24}&=-(\lambda+\mu)e^{024}-e^{034}-e^{025}.&&
\end{aligned}
\end{equation}
For type $1$, the only invariant lines are $\mathbb C e^3$ and
$\mathbb C e^5$, because $\lambda\ne\mu$.
We may therefore write $\Omega=ze^0+e^j$ with $j\in\{3,5\}$;
non-degeneracy requires $\operatorname{Im}z\ne0$.
Integrability implies $\d\omega\wedge e^j=0$.
The coefficients obtained from $e^{014}$ and $e^{024}$ give, for either
choice of $j$,
\begin{equation}\label{systemtwo}
(\lambda+\mu)\omega_{14}=0,\qquad
\omega_{14}+(\lambda+\mu)\omega_{24}=0.
\end{equation}
Thus $\omega_{14}=\omega_{24}=0$, since $\lambda+\mu\ne0$.

If $j=3$, non-degeneracy requires
\begin{equation}\label{omegas}
\omega_{12}\omega_{45}-\omega_{14}\omega_{25}
+\omega_{15}\omega_{24}\ne0,
\end{equation}
so $\omega_{12}\omega_{45}\ne0$.
The coefficients arising from $e^{012}$ and $e^{045}$ in
$\d\omega\wedge e^3=0$ also give
$2\lambda\omega_{12}=0$ and $2\mu\omega_{45}=0$.
Hence $\lambda=\mu=0$, contrary to the hypothesis.
If $j=5$, non-degeneracy instead requires
\begin{equation}\label{omegas2}
\omega_{12}\omega_{34}-\omega_{13}\omega_{24}
+\omega_{14}\omega_{23}\ne0,
\end{equation}
which reduces to $\omega_{12}\omega_{34}\ne0$.
But the coefficients arising from $e^{012}$ and $e^{013}$ give
\[
2\lambda\omega_{12}=0,\qquad
\omega_{12}+2\lambda\omega_{13}=0.
\]
If $\lambda\ne0$, the first equation forces $\omega_{12}=0$;
if $\lambda=0$, the second does. Both contradict non-degeneracy.

Finally, at type $2$, the distinct eigenvalues imply that an invariant
plane is one of
\[
\operatorname{span}_{\mathbb C}\{e^2,e^3\},\qquad
\operatorname{span}_{\mathbb C}\{e^4,e^5\},\qquad
\operatorname{span}_{\mathbb C}\{e^3,e^5\}.
\]
Each is real, so $\dim_{\mathbb C}(W+\bar W)=2<3$, contradicting
Lemma \ref{lem:dim-W-conj}.
\end{proof}

\subsection{Explicit constructions and closed generators}

Having excluded patterns (1)--(3), we now give constructions for
parameter families in patterns (4)--(9). We also include the diagonal
representatives from Proposition \ref{prop:diag6}, so that all the
existence statements in this section can be read together.

Table \ref{tab:spectral} specifies the parameter conditions and the
types constructed. Its case numbers refer to the preceding Jordan
list; the letters in cases 6a--6c and 8a--8b distinguish different
parameter families within the same pattern. Each row is subject to
\emph{both} its displayed conditions and the restrictions in that list.
Whole Jordan blocks may be reordered together with their coframes.
In particular, the two size-two blocks in case (5) may be exchanged;
in case (6), the singleton blocks may be permuted while the size-two
block occupies positions $1,2$.

Table \ref{tab:forms} then gives a representative for each indicated
type, in the coframe ordered by the corresponding matrix in
Table \ref{tab:spectral}. We use
\[
\eta=e^1+\mathbf i e^2,\qquad \tau=a+\mathbf i b,
\qquad A^T\eta=\tau\eta
\]
when a complex block is present. Products of forms in the tables denote
wedge products. The last two columns record the constants used to
verify integrability and non-degeneracy, as follows.

\begin{theo}\label{sec3:constructions}
For each row of Table \ref{tab:forms}, impose the corresponding
conditions of Table \ref{tab:spectral} and set
$\rho=\exp({\mathbf i\omega})\wedge\Omega$.
Then $\rho$ defines a left-invariant generalized complex structure
of the indicated type $k$. More precisely,
\begin{equation}\label{sec3:row-identities}
\d\rho=-\Lambda e^0\wedge\rho,\qquad
\omega^{3-k}\wedge\Omega\wedge\bar\Omega
=c_\rho e^{012345},
\end{equation}
where $\Lambda$ and the nonzero constant $c_\rho$ are listed in the table.
The displayed invariant generator is closed if and only if $\Lambda=0$.
\end{theo}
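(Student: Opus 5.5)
The verification is row by row, and I will reduce every row to three checks using the machinery already in place.

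First, the $\Omega$ factor. For each row, $\Omega$ is a wedge of $k$ one-forms $\theta_j=c_je^0+\eta_j$. I will check that $W=\mathrm{span}_{\mathbb C}\{\eta_j\}$ is $A^T$-invariant by reading $A^T$ off the Jordan coframe conventions. These are $A^Te^j=te^j+e^{j+1}$ on real chains, and $A^T\eta=\tau\eta$ on a complex block. Lemma \ref{lemma-integ-equiv_new} then gives $\Psi(\Omega_{ab})=\tr(A^T|_W)\,\Omega_{ab}$. Since $\d(e^0\wedge\Omega_0)=0$ by Proposition \ref{prop:ext-derivative}, this yields $\d\Omega=-\Lambda e^0\wedge\Omega$ with $\Lambda=\tr(A^T|_W)$, and I will compare this trace with the tabulated value.

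Second, the $\omega$ factor. The only remaining integrability condition is (b) of Lemma \ref{lemma-integ-equiv_new}, here with $\beta=\mathbf i\omega$, namely $\Psi(\omega_{ab})\wedge\Omega_{ab}=0$. I will compute $\Psi(\omega_{ab})$ term by term using the derivation rule, as in \eqref{eq:d2.4.1}. For each monomial $e^{ij}$, the derivation gives (sum of eigenvalues)$\,e^{ij}$ plus shifted terms. The point to verify is that every surviving term contains a covector lying in $W+e^0$ directions that is killed by $\Omega_{ab}$. In practice the tabulated $\omega$ will be either closed, by an opposite-eigenvalue cancellation as in the diagonal proof, or closed up to terms divisible by a factor of $\Omega$, as in the telescoping argument of Example \ref{ex:jordan}. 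Once this holds, Theorem \ref{thm-Calabi-Yau} immediately gives $\d\rho=-\Lambda e^0\wedge\rho$. It also gives the closedness criterion $\d\rho=0\iff\Lambda=0$, because other invariant generators differ by constants.

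Third, non-degeneracy. I will expand $\Omega\wedge\bar\Omega$, which is a multiple of a real decomposable $2k$-form, for example $(z-\bar z)e^{0j}$ or $(\tau-\bar\tau)$-type factors. Only the complementary-degree component of $\omega^{3-k}$ survives against it. That coefficient, a single Pfaffian-like minor, must equal $c_\rho\neq0$. I will check that the table's conditions (such as $\operatorname{Im} z\ne0$, $b\ne0$, and the spectral restrictions) guarantee this.

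The main obstacle is the second check for the non-diagonal rows (4)--(8). There the nilpotent shifts produce cross terms in $\Psi(\omega_{ab})$ that need not vanish, and one must confirm that the table's parameter conditions (opposite or equal eigenvalue relations) make each leftover term either zero or divisible by a factor of $\Omega_{ab}$. I would handle this by organizing $\omega_{ab}$ by bidegree with respect to $W+\overline W$ and its $A$-invariant complement, as in Observation 1. Only the block-pure part needs the spectral cancellation, while mixed terms containing a factor of $W$ die automatically. For rows that fit Theorem \ref{thm:gcs_from_fibration} (complex quotient plus symplectic invariant ideal), I would instead invoke that theorem together with Proposition \ref{prop:jordan_equivalence}, and compute only $\Lambda$ and $c_\rho$ directly.
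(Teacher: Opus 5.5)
Your plan is correct and is essentially the paper's proof: both get $\d\Omega=-\Lambda e^0\wedge\Omega$ from the trace of $A^T$ on the span $W$ of the restricted factors, check $\d\omega\wedge\Omega=0$ row by row, combine these into $\d\rho=-\Lambda e^0\wedge\rho$, expand $\omega^{3-k}\wedge\Omega\wedge\bar\Omega$, and deduce the closedness criterion from $e^0\wedge\rho\neq0$. The bidegree device in your last paragraph is unnecessary and not always available: in case 4 at type $2$, for instance, $W+\overline W=\mathrm{span}\{e^3,e^4,e^5\}$ has no $A^T$-invariant complement. The direct term-by-term computation you describe first suffices, since the only nonzero derivatives are $-e^{013}$, $-e^{025}$, $-e^{035}$, $-e^{045}$ and $-2a\,e^{012}$, and each is killed by a factor of $\Omega$.
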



\begin{table}[htpb]
\centering\small
\renewcommand{\arraystretch}{1.2} 
\caption{Parameter conditions for Theorem \ref{sec3:constructions}. The restrictions in the Jordan list remain in force.}
\label{tab:spectral}
\begin{tabular}{llll}
\toprule
Case & $A$ & Conditions & Types constructed\\
\midrule
4 & $J_3(\lambda)\oplus\mu\oplus\nu$ & $\lambda=0,\ \mu=\nu$ & $1,2$\\
5 & $J_2(\lambda)\oplus J_2(\mu)\oplus\nu$ & $\mu=0,\ \nu=\lambda$ & $1,2$\\
6a & $J_2(\lambda)\oplus\mu\oplus\nu\oplus\gamma$ & $\mu=\lambda,\ \gamma=-\nu$ & $1,2$\\
6b & $J_2(\lambda)\oplus\mu\oplus\nu\oplus\gamma$ & $\mu=\nu=\alpha,\ \gamma=-\lambda$ & $1,2$\\
6c & $J_2(\lambda)\oplus\mu\oplus\nu\oplus\gamma$ & $\lambda=0,\ \mu=\nu$ & $1$\\
7 & $C(a,b)\oplus J_3(\lambda)$ & $b\ne0,\ \lambda=0$ & $1,2$\\
8a & $C(a,b)\oplus J_2(\lambda)\oplus\mu$ & $b\ne0,\ \lambda+\mu=0$ & $1,2$\\
8b & $C(a,b)\oplus J_2(\lambda)\oplus\mu$ & $b\ne0,\ \lambda=0$ & $1,2$\\
9 & $C(a,b)\oplus\lambda\oplus\mu\oplus\nu$ & $b\ne0,\ \lambda+\mu=0$ & $1,2$\\
10 & $\operatorname{diag}(\lambda,\lambda,\mu,\zeta,-\zeta)$ & as displayed & $1,2$\\
\bottomrule
\end{tabular}
\end{table}

\begin{table}[H]
\centering\small 
\renewcommand{\arraystretch}{1.25} 
\caption{Representatives and verification constants for \eqref{sec3:row-identities}. Read each row with the matching conditions in Table \ref{tab:spectral}.}
\label{tab:forms}
\begin{tabular}{llllll}
\toprule
Case & Type & $\Omega$ & $\omega$ & $\Lambda$ & $c_\rho$\\
\midrule
4 & 1 & $e^4+\mathbf i e^5$ & $e^{01}+e^{23}$ & $\mu$ & $-4\mathbf i$\\
4 & 2 & $(\mathbf i e^0+e^3)(e^4+\mathbf i e^5)$ & $e^{12}$ & $\mu$ & $-4$\\
5 & 1 & $e^2+\mathbf i e^5$ & $e^{01}+e^{34}$ & $\lambda$ & $-4\mathbf i$\\
5 & 2 & $(e^0+e^1+\mathbf i e^5)(\mathbf i e^0+e^2)$ & $e^{34}$ & $2\lambda$ & $4$\\
6a & 1 & $e^2+\mathbf i e^3$ & $e^{01}+e^{45}$ & $\lambda$ & $-4\mathbf i$\\
6a & 2 & $(e^0+e^1+\mathbf i e^3)(\mathbf i e^0+e^2)$ & $e^{45}$ & $2\lambda$ & $4$\\
6b & 1 & $e^3+\mathbf i e^4$ & $e^{01}+e^{25}$ & $\alpha$ & $-4\mathbf i$\\
6b & 2 & $(\mathbf i e^0+e^2)(e^3+\mathbf i e^4)$ & $e^{15}$ & $\lambda+\alpha$ & $4$\\
6c & 1 & $e^3+\mathbf i e^4$ & $e^{05}+e^{12}$ & $\mu$ & $-4\mathbf i$\\
7 & 1 & $e^0+\eta$ & $e^{12}+e^{03}+e^{45}$ & $\tau$ & $-4\mathbf i$\\
7 & 2 & $\eta(e^0-\mathbf i e^5)$ & $e^{34}$ & $\tau$ & $-4$\\
8a & 1 & $e^0+\eta$ & $e^{12}+e^{03}+e^{45}$ & $\tau$ & $-4\mathbf i$\\
8a & 2 & $\eta(e^0-\mathbf i e^4)$ & $e^{35}$ & $\tau+\lambda$ & $4$\\
8b & 1 & $e^0+\eta$ & $e^{12}+e^{34}+e^{05}$ & $\tau$ & $-4\mathbf i$\\
8b & 2 & $\eta(e^0-\mathbf i e^5)$ & $e^{34}$ & $\tau+\mu$ & $-4$\\
9 & 1 & $e^0+\eta$ & $e^{34}+e^{05}$ & $\tau$ & $-4\mathbf i$\\
9 & 2 & $\eta(e^0-\mathbf i e^5)$ & $e^{34}$ & $\tau+\nu$ & $-4$\\
10 & 1 & $e^1+\mathbf i e^2$ & $e^{03}+e^{45}$ & $\lambda$ & $-4\mathbf i$\\
10 & 2 & $(e^1+\mathbf i e^2)(e^0+\mathbf i e^3)$ & $e^{45}$ & $\lambda+\mu$ & $4$\\
\bottomrule
\end{tabular}
\end{table}

\begin{proof}
Extend $A^T$ to the degree-zero derivation $\Psi$ of
$\bigwedge^*\mathfrak g_A^*\otimes\mathbb C$ by setting $\Psi(e^0)=0$.
Thus
\[
\Psi(\alpha\wedge\delta)=\Psi(\alpha)\wedge\delta
+\alpha\wedge\Psi(\delta),\qquad
\d\alpha=-e^0\wedge\Psi(\alpha).
\]
This reduces the verification to the action on the one-form factors
and on the summands of $\omega$.

First, applying the block formulas to each $\Omega$ gives
\begin{equation}\label{sec3:dOmega}
\d\Omega=-\Lambda e^0\wedge\Omega
\end{equation}
with the listed value of $\Lambda$. One can compute $\Lambda$ directly
as the trace of $A^T$ on the span of the restrictions of the factors
of $\Omega$ to $\mathfrak h$. For example, in case 5 at type $2$,
these restrictions are $u=e^1+\mathbf i e^5$ and $v=e^2$;
they satisfy $A^Tu=\lambda u+v$ and $A^Tv=\lambda v$.
Consequently,
$\Psi(u\wedge v)=2\lambda u\wedge v$.
The terms in $\Omega$ containing $e^0$ have zero exterior derivative,
which proves \eqref{sec3:dOmega} for that row. The same calculation
applies to case 6a at type $2$, with $u=e^1+\mathbf i e^3$.
In the other rows, the restricted factors are eigenvectors, so their
eigenvalues simply add to give $\Lambda$.

Next, all derivatives of the displayed two-forms are given by
\[
\d\omega=
\begin{cases}
-e^{013}, & \text{case 4, type }2,\\
-e^{025}, & \text{case 6b, type }2,\\
-e^{035}, & \text{case 7, type }2,\\
-e^{045}, & \text{case 8a, type }2,\\
-2a e^{012}, & \text{cases 7, 8a, and 8b, type }1,\\
0, & \text{all other rows.}
\end{cases}
\]
Each of these forms wedges to zero with its corresponding $\Omega$.
In the three complex-block rows of type $1$, this follows from
$e^{012}\wedge(e^0+\eta)=0$; in the four exceptional type $2$ rows,
it follows from the common one-form factors after the $e^0$ terms
are discarded. This proves $\d\omega\wedge\Omega=0$ in every row.
For instance, in case 6b the condition $\gamma=-\lambda$ gives
\[
\Psi(e^{25})=(\lambda+\gamma)e^{25}=0,\qquad
\Psi(e^{15})=(\lambda+\gamma)e^{15}+e^{25}=e^{25}.
\]
Thus the type $1$ two-form is closed, whereas the type $2$ two-form
has derivative $-e^{025}$, which vanishes upon wedging with
$(\mathbf i e^0+e^2)\wedge(e^3+\mathbf i e^4)$.
This illustrates why integrability requires
$\d\omega\wedge\Omega=0$, without requiring $\d\omega=0$.

Combining these identities gives
\[
\begin{aligned}
\d\rho
&=\exp({\mathbf i\omega})\wedge
  \bigl(\mathbf i\,\d\omega\wedge\Omega+\d\Omega\bigr)\\
&=-\Lambda e^0\wedge\rho.
\end{aligned}
\]
The right-hand side is the Clifford action of the complex one-form
$-\Lambda e^0$, so the integrability criterion applies.
The forms $\Omega$ are decomposable of the indicated degree, and
expanding $\omega^{3-k}\wedge\Omega\wedge\bar\Omega$ yields the
nonzero constants $c_\rho$ listed in Table \ref{tab:forms}.
For example, in case 6b at type $2$,
\[
\Omega\wedge\bar\Omega=-4e^{0234},\qquad
\omega\wedge\Omega\wedge\bar\Omega
=-4e^{15}\wedge e^{0234}=4e^{012345}.
\]
Thus every displayed spinor is non-degenerate and has the stated type.
Finally, $e^0\wedge\Omega\ne0$ in every row; since multiplication by
$\exp({\mathbf i\omega})$ is invertible, also $e^0\wedge\rho\ne0$.
Hence \eqref{sec3:row-identities} implies that $\d\rho=0$ exactly
when $\Lambda=0$.
\end{proof}

The values of $\Lambda$ also identify which of these
representatives yield invariant generalized Calabi--Yau structures.
Here the restriction excluding the extremal types is essential: some
formal solutions of $\Lambda=0$ lie outside the parameter range of
this section.

\begin{cor}\label{sec3:closed-representatives}
Under the standing hypotheses, the only closed invariant generators
among the representatives in Table \ref{tab:forms} occur in case 6b,
at type $2$, with
\[
\alpha=-\lambda,\qquad \lambda\ne0.
\]
Their matrices and generators are
\[
\begin{gathered}
A=J_2(\lambda)\oplus(-\lambda)\oplus(-\lambda)\oplus(-\lambda),\\
\rho=\exp({\mathbf i e^{15}})\wedge
(\mathbf i e^0+e^2)\wedge(e^3+\mathbf i e^4).
\end{gathered}
\]
These Lie groups admit no lattice.
\end{cor}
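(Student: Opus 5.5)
The plan is a row-by-row inspection of Table \ref{tab:forms}. By Theorem \ref{sec3:constructions}, the displayed generator of a row is closed exactly when its listed $\Lambda$ vanishes. So for each row I would impose $\Lambda=0$ together with the conditions of Table \ref{tab:spectral}, and then test the resulting spectrum against the restrictions of the Jordan list. Those restrictions encode the absence of invariant complex and symplectic structures.

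\emph{Complex-block rows and the diagonal row.} In cases 7, 8a, 8b and 9, $\Lambda$ equals $\tau$ or $\tau$ plus a real number. Its imaginary part is therefore $b\neq0$, so $\Lambda\neq0$ and these rows are discarded. Case 10 is settled by Corollary \ref{sec3:diag-closed}.

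\emph{Real non-diagonal rows.} Here I go through each row in turn.
\begin{enumerate}
\item Case 4: $\Lambda=\mu=0$ forces $\lambda=\mu=\nu=0$. This violates ``$\lambda\neq0$ or $\mu\neq-\nu$''.
\item Case 5: $\Lambda\in\{\lambda,2\lambda\}$ gives $\lambda=0=\mu$. This violates $\lambda\neq\pm\mu$.
\item Case 6a: $\Lambda\in\{\lambda,2\lambda\}$ gives $\lambda=\mu=0$. The multiset $\{0,0,\nu,-\nu\}$ then contains two opposite pairs.
\item Case 6c: $\lambda=\mu=\nu=0$, which yields the opposite pairs $\{0,0\}$ and $\{0,\gamma\}$ only if $\gamma=0$. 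More to the point, the rule for $\lambda=0$ requires $\{\mu,\nu,\gamma\}$ to contain no opposite pair, yet it contains $\{0,0\}$.
\item Case 6b, type $1$: $\Lambda=\alpha=0$ gives $\{\lambda,0,0,-\lambda\}$, which has two opposite pairs.
\item Case 6b, type $2$: $\Lambda=\lambda+\alpha=0$ gives the multiset $\{\lambda,-\lambda,-\lambda,-\lambda\}$. If $\lambda=0$, everything vanishes and the restrictions fail. If $\lambda\neq0$, there is only one identical pair and only one opposite pair, since each uses up the single copy of $\lambda$ or two copies of $-\lambda$. So neither forbidden configuration occurs.
\end{enumerate}
This leaves exactly the family $\alpha=-\lambda\neq\lambda$, with $A$ and $\rho$ read off from Table \ref{tab:forms}.

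\emph{No lattice.} For the surviving family, $\operatorname{tr}A=2\lambda-3\lambda=-\lambda\neq0$. Hence $G_A$ is not unimodular and cannot admit a lattice, by the remark preceding Proposition \ref{prop:andrada}.

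The only delicate point is the multiset bookkeeping in case 6b: one must check that $\{\lambda,-\lambda,-\lambda,-\lambda\}$ avoids two \emph{disjoint} identical pairs and two disjoint opposite pairs, respecting the convention fixed before the Jordan list.
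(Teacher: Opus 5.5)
Your proposal is correct and follows the paper's own argument essentially step for step. You discard the complex-block rows via $\operatorname{Im}\Lambda=b\neq0$ and the diagonal rows via Corollary~\ref{sec3:diag-closed}, test $\Lambda=0$ against the Jordan-list restrictions in cases 4, 5, 6a, 6b and 6c, and conclude with $\operatorname{tr}A=-\lambda\neq0$. The only difference is in case 6a, where you exhibit the two opposite pairs $\{0,0\},\{\nu,-\nu\}$ in the full multiset instead of invoking the extra $\lambda=0$ clause as the paper does; both are valid, and in case 6c the $\lambda=0$ clause alone settles it, so your first sentence there can be dropped.
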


\begin{proof}
For the complex-block rows, $\operatorname{Im}\Lambda=b\ne0$,
so the generators are not closed. The diagonal rows are covered by
Corollary \ref{sec3:diag-closed}.
In case 4, $\Lambda=\mu$, and $\mu=0$ together with
$\lambda=0$ and $\mu=\nu$ violates the restriction for pattern (4).
In case 5, $\Lambda=0$ forces $\lambda=0$; since $\mu=0$,
this violates $\lambda\ne\pm\mu$.
In case 6a, it again forces $\lambda=0$, so $\mu=0$ and
$\gamma=-\nu$, contrary to the additional restriction for pattern (6)
when $\lambda=0$. In case 6c, it forces $\mu=\nu=0$;
these two singleton entries form an opposite pair, again excluded
when $\lambda=0$.

In case 6b at type $1$, closure would require $\alpha=0$.
The multiset $\{\lambda,\mu,\nu,\gamma\}$ would then be
$\{\lambda,0,0,-\lambda\}$, containing two opposite pairs.
At type $2$, however, closure requires $\alpha=-\lambda$.
For $\lambda\ne0$, the resulting multiset
$\{\lambda,-\lambda,-\lambda,-\lambda\}$ contains neither two
identical pairs nor two opposite pairs, so the standing restrictions
are satisfied. The value $\lambda=0$ is excluded by those restrictions.
Finally,
\[
\operatorname{tr}A=2\lambda-3\lambda=-\lambda\ne0.
\]
The group is therefore non-unimodular and cannot admit a lattice.
\end{proof}

Proposition \ref{prop:nonexistence} settles patterns (1)--(3), and
Proposition \ref{prop:diag6} settles the diagonal pattern (10).
For patterns (4)--(9), Theorem \ref{sec3:constructions} provides
explicit sufficient conditions. In particular, listing only type $1$
in case 6c does not assert nonexistence of type $2$, and a parameter
choice absent from Table \ref{tab:spectral} is not excluded by the
construction argument. Likewise, Corollary
\ref{sec3:closed-representatives} concerns the displayed generators;
it does not classify all invariant generalized Calabi--Yau structures
on the non-diagonal families.

\bibliographystyle{amsplain} 
 
\bibliography{main}

\medskip
 
\appendix 
\section{Algebraic bounds on invariant subspaces}
\label{appendix:A}

The proof of Theorem \ref{thm:type-obstruction} asks that $\mathcal{W}_\lambda$ maximizes $\dim_{\mathbb{C}} W - \dim_{\mathbb{C}}(W \cap \overline{W})$ among all invariant subspaces. In this appendix, we prove this by analyzing the algebraic properties of nilpotent operators on complexified vector spaces. 
Throughout this section, let $V$ be a finite-dimensional real vector space and $N: V \to V$ a nilpotent operator, extended $\mathbb{C}$-linearly to $V_{\mathbb{C}}$.  
 For any $N$-invariant subspace $W \subset V_{\mathbb{C}}$, we set
  $$c(W) := \dim_{\mathbb{C}} W - \dim_{\mathbb{C}}(W \cap \overline{W}).$$
We recall that the \emph{Jordan type} of $N$ on $V$ is the partition $\mu=(\mu_1\ge \mu_2\ge\cdots)$ of its block sizes. A \emph{Jordan chain} of length $m$ associated to $N$ is a sequence of linearly independent vectors $\{w_j\}_{j=1}^{m}$ such that $Nw_j = w_{j+1}$ for $1\leq j < m$ and $Nw_m = 0$. 

\begin{lemma}\label{lem:2}
Let $W \subset V_\mathbb{C}$ be an $N$-invariant subspace. Define $R := W \cap \overline{W}$ and $U := W + \overline{W}$. Then$$U/R = Z \oplus \overline{Z},$$ where $ Z := W/R$ and $\dim_\mathbb{C} Z = c(W)$.
\end{lemma}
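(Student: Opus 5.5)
The plan is to realize $Z$ and $\overline Z$ concretely inside $U/R$ and then check by elementary linear algebra that they are complementary. Three facts are used throughout: $R=W\cap\overline W\subseteq W$, $R\subseteq\overline W$, and $\overline R=R$.

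\emph{Conjugation and $N$ descend to $U/R$.} Since $\overline{U}=U$ and $\overline R=R$, complex conjugation on $V_{\mathbb C}$ restricts to an antilinear involution of $U$ preserving $R$. It therefore induces an antilinear involution of $U/R$, still denoted by a bar. Because $N$ is real, $N\overline W=\overline{NW}\subseteq\overline W$, so $R$ and $U$ are $N$-invariant, and $N$ induces an operator on $U/R$ commuting with the induced conjugation. This is not needed for the splitting itself, but it is what makes the lemma usable for the subsequent Jordan-chain counting in the appendix.

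\emph{Defining $Z$ and $\overline Z$.} Since $R\subseteq W\subseteq U$, the quotient $Z=W/R$ is canonically the image of $W$ in $U/R$, and $\dim_{\mathbb C}Z=\dim W-\dim(W\cap\overline W)=c(W)$. Likewise $R\subseteq\overline W$, so $\overline W/R$ is the image of $\overline W$ in $U/R$. By construction this image is the conjugate of $Z$ under the induced involution, which justifies writing it as $\overline Z$; in particular $\dim\overline Z=c(W)$ as well.

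\emph{The sum is direct and exhausts $U/R$.} Surjectivity is immediate: $U=W+\overline W$, so $U/R=Z+\overline Z$. For the intersection, suppose $w\in W$ and $w'\in\overline W$ have the same class modulo $R$. Then $w-w'\in R\subseteq\overline W$, hence $w\in\overline W$. Therefore $w\in W\cap\overline W=R$, its class is zero, and $Z\cap\overline Z=0$.

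As a consistency check, $\dim U=2\dim W-\dim R$ gives $\dim U/R=2(\dim W-\dim R)=2c(W)$, as the splitting requires. No step is a real obstacle. The only point needing care is making precise that ``$Z:=W/R$'' denotes the image of $W$ in $U/R$ and that $\overline Z$ is its image under the induced conjugation; both rest on $R$ being contained in both $W$ and $\overline W$.
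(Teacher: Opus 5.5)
Your proposal is correct and follows the paper's proof: both view $W/R$ and $\overline{W}/R$ as subspaces of $U/R$ and get spanning from $U=W+\overline{W}$. Both also get the trivial intersection from the same step, namely that $w-w'\in R\subseteq\overline{W}$ forces $w\in W\cap\overline{W}=R$.
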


\begin{proof}
    Note that $R$ and $U$ are invariant under complex conjugation. The canonical projection onto $U/R$ induces natural injections $W/R \hookrightarrow U/R$ and $\overline{W}/R \hookrightarrow U/R$.  These images span $U/R$ and intersect trivially. Indeed, if $x \in W$ and $y \in \overline{W}$ have the same image in $U/R$, then $x - y \in R \subset \overline{W}$, implying $x \in W \cap \overline{W} = R$. Defining $Z := W/R$ and  $\overline{Z} := \overline{W}/R$, we obtain $U/R = Z \oplus \overline{Z} $. 
\end{proof}

\begin{lemma}\label{lem:3}
Let $R\subseteq U\subseteq V$ be $N$-invariant, let $\nu_1 \geq \nu_2 \geq \dots \geq 0 $ be the Jordan type of $N$ on $U/R$, and $\mu_1 \geq \mu_2 \geq \dots \geq 0 $ the Jordan type of $N$ on $V$. Then $\nu_i\le\mu_i$ for all $i$.
\end{lemma}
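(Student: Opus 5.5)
The plan is to use the rank characterisation of Jordan type. For a nilpotent operator $N$ on a finite-dimensional space $X$ with Jordan type $\mu$, the dual partition is determined by
\[
\mu_1^*+\cdots+\mu_j^*=\dim\ker N^j=\dim X-\operatorname{rank}N^j .
\]
Here $\mu_j^*$ is the number of blocks of size at least $j$. A cleaner equivalent statement of the claim is: the Jordan type of a subquotient is dominated entrywise by the Jordan type of the ambient space. I would prove this in two steps, first for invariant subspaces and then for quotients, since $U/R$ is a quotient of the invariant subspace $U$.

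\textbf{Step 1: subspaces.} Let $U\subseteq V$ be $N$-invariant with Jordan type $\kappa$. For the $i$-th part, $\kappa_i\ge m$ holds exactly when $\dim\operatorname{im}\bigl(N|_U^{\,m-1}\bigr)\cap\ker N \ge i$. This is because $\operatorname{im}N^{m-1}\cap\ker N$ is spanned by the bottoms of the chains of length at least $m$, so its dimension is the number of blocks of size at least $m$. Since
\[
\operatorname{im}\bigl(N|_U^{\,m-1}\bigr)\cap\ker N\subseteq \operatorname{im}N^{m-1}\cap\ker N ,
\]
we get $\#\{j:\kappa_j\ge m\}\le\#\{j:\mu_j\ge m\}$ for every $m$. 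This is $\kappa^*\le\mu^*$ entrywise, and that is equivalent to $\kappa_i\le\mu_i$ for all $i$.

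\textbf{Step 2: quotients.} Apply Step 1 to the adjoint operator $N^T$ on $U^*$. The dual space $(U/R)^*$ is the annihilator $R^\perp\subseteq U^*$, which is $N^T$-invariant. Moreover $N^T$ has the same Jordan type as $N$. Hence the Jordan type $\nu$ of $N$ on $U/R$ equals that of $N^T$ on $R^\perp$, which by Step 1 is dominated entrywise by the type $\kappa$ of $N$ on $U$. Combining the two steps gives $\nu_i\le\kappa_i\le\mu_i$. The same argument works verbatim over $\mathbb C$, which is the setting where the lemma is applied, since complexification preserves Jordan type.

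\textbf{Main obstacle.} The delicate point is the identification $\#\{j:\kappa_j\ge m\}=\dim\bigl(\operatorname{im}N^{m-1}\cap\ker N\bigr)$. It should be checked from a Jordan basis: $\operatorname{im}N^{m-1}$ is spanned by the $N^{m-1}$-images of the chain vectors, and intersecting with $\ker N$ leaves exactly one vector, the chain bottom, for each chain of length at least $m$. Once this identity is in place, the rest is monotonicity of images and kernels under inclusion.
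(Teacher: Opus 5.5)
Your proof is correct and follows essentially the same route as the paper: count the blocks of size at least $r$ as $\dim(\ker N\cap N^{r-1}U)$ and use monotonicity under inclusion for the subspace case, then handle the quotient $U/R$ by applying the subspace case to the $N^T$-invariant annihilator of $R$ in $U^*$. One small remark: your opening appeal to $\dim\ker N^j$ is not what you actually use, and on its own it would only compare partial sums rather than give the entrywise bound; the intersection count you do use is the right invariant.
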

\begin{proof}
Let $\tau$ be the Jordan type on an invariant subspace $U\subseteq V$.
For every $r\ge1$, counting Jordan blocks gives
\[
\#\{j:\tau_j\ge r\}
=\dim(\ker N\cap N^{r-1}U)
\le\dim(\ker N\cap N^{r-1}V)
=\#\{j:\mu_j\ge r\}.
\]
If $\tau_i>\mu_i$, choosing $r=\tau_i$ makes the left count
at least $i$ and the right count at most $i-1$, a contradiction.
Thus $\tau_i\le\mu_i$.
For an invariant $R\subseteq U$, the dual of $U/R$ is the invariant
annihilator of $R$ in $U^*$. Transposition preserves Jordan type,
so applying the subspace result to this annihilator gives
$\nu_i\le\tau_i\le\mu_i$. The argument works over $\mathbb R$
and over $\mathbb C$.
\end{proof}

\begin{lemma}\label{lem:c-bound}
Let $\mu = (\mu_1 \ge \dots \ge \mu_s)$ be the Jordan type of $N$ on $V_\mathbb{C}$. For any $N$-invariant subspace $W \subset V_\mathbb{C}$,
\[
c(W) \leq \sum_{j=1}^{\lfloor s/2 \rfloor} \mu_{2j}.
\]
\end{lemma}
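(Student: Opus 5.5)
The plan is to combine Lemma \ref{lem:2} and Lemma \ref{lem:3}. Set $R:=W\cap\overline W$ and $U:=W+\overline W$. Both are $N$-invariant, because $N$ is real and so commutes with conjugation, and both are conjugation-stable. By Lemma \ref{lem:2}, $U/R=Z\oplus\overline Z$ with $Z=W/R$ and $\dim_{\mathbb C}Z=c(W)$. The key structural point is that $Z$ and $\overline Z$ are both $N$-invariant in $U/R$, since $W$ and $\overline W$ are, and conjugation intertwines $N|_Z$ with $N|_{\overline Z}$. Hence $N|_{\overline Z}$ is the conjugate of $N|_Z$ and has the same Jordan type. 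Write that type as $\kappa_1\ge\kappa_2\ge\cdots\ge\kappa_t$. The Jordan type $\nu$ of $N$ on $U/R$ is then the partition obtained by doubling every part of $\kappa$:
\[
\nu=(\kappa_1,\kappa_1,\kappa_2,\kappa_2,\ldots,\kappa_t,\kappa_t),
\]
so that $\nu_{2j-1}=\nu_{2j}=\kappa_j$.

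Next I apply Lemma \ref{lem:3} to $R\subseteq U\subseteq V_{\mathbb C}$, which is legitimate because the lemma works over $\mathbb C$. This gives $\nu_i\le\mu_i$ for all $i$, where $\mu$ is the Jordan type of $N$ on $V_{\mathbb C}$, padded with zeros beyond $s$. In particular $\kappa_j=\nu_{2j}\le\mu_{2j}$ for every $j$. Summing,
\[
c(W)=\dim Z=\sum_{j=1}^{t}\kappa_j\le\sum_{j\ge1}\mu_{2j}=\sum_{j=1}^{\lfloor s/2\rfloor}\mu_{2j},
\]
where the last equality holds because $\mu_{2j}=0$ once $2j>s$. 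This also forces $t\le\lfloor s/2\rfloor$ as a by-product.

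The only delicate step is the claim about $\nu$. One must check that the Jordan type of a direct sum of invariant subspaces is the union of the two partitions; this is immediate from the block count $\#\{j:\tau_j\ge r\}=\dim(\ker N\cap N^{r-1}(\cdot))$, which is additive over invariant direct sums. One must also check that conjugation preserves Jordan type. This holds because conjugation is an antilinear bijection $Z\to\overline Z$ commuting with $N$, so it preserves $\dim_{\mathbb C}\ker N^r$ for every $r$.
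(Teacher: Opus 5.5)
Your proposal is correct and follows essentially the same route as the paper. You decompose $U/R=Z\oplus\overline Z$ via Lemma~\ref{lem:2}, use conjugation to see that $N$ has the same Jordan type on $Z$ and on $\overline Z$ (so the type on $U/R$ is the doubled partition), and then read off $\nu_{2j}\le\mu_{2j}$ from Lemma~\ref{lem:3} over $\mathbb C$. The only differences are cosmetic: you justify the Jordan-type claims through the counts $\dim_{\mathbb C}\ker N^r$ and $\dim(\ker N\cap N^{r-1}(\cdot))$ rather than by transporting explicit Jordan chains, and you pad $\mu$ with zeros instead of separately checking $2r\le s$.
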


\begin{proof}
    With $R, U, Z$ as in Lemma \ref{lem:2}, let $\tau=(\tau_1\ge\cdots\ge \tau_r)$ be the Jordan type of $N$ acting $\mathbb{C}$-linearly on $Z$. Since $R = \overline{R}$ by definition, complex conjugation induces a well-defined antilinear bijection $\sigma: Z \to \overline{Z}$ given by $\sigma([w]) = [\overline{w}]$. We also note that $N$ naturally commutes with complex conjugation, in particular, $N\sigma([w]) = \sigma(N[w])$.  

For the $j$-th Jordan block of $N$ in $Z$, fix a basis $\{w_1, \dots, w_{\tau_j}\}$ satisfying $Nw_k = w_{k+1}$ for $k < \tau_j$ and $Nw_{\tau_j} = 0$. 

Applying $\sigma$ to our chosen basis yields the conjugated image $\{\overline{w_1}, \dots, \overline{w_{\tau_j}}\}$, and $ N(\overline{w_k}) = \overline{N(w_k)} = \overline{w_{k+1}}$ 
for $k < \tau_j$, while $N(\overline{w_{\tau_j}}) = 0$. Thus, the conjugate vectors form a Jordan chain of the same length $\tau_j$ in $\overline{Z}$. Consequently, the operator $N$ on $\overline{Z}$ has Jordan type $\tau$, as it does on $Z$.

By Lemma \ref{lem:2}, $U/R = Z \oplus \overline{Z}$. Since both summands are $N$-invariant, the union of their respective Jordan bases yields a linearly independent Jordan basis for the quotient $U/R$. Thus, the Jordan type of $N$ on $U/R$ is the termwise merge of the partition $\tau$ with itself, resulting in:
$$\nu = (\tau_1 \ge \tau_1 \ge \tau_2 \ge \tau_2 \ge \dots \ge \tau_r \ge \tau_r).$$
We now apply Lemma \ref{lem:3} to the $N$-invariant complex subspaces $R \subseteq U \subseteq V_{\mathbb{C}}$. Once the Jordan type of $N$ on $V_{\mathbb{C}}$ is identical to its Jordan type $\mu$ on $V$, we obtain: 
$$\nu_i \le \mu_i, \quad\forall i.$$
Reading it in the
even-indexed terms, $\nu_{2j}=\tau_j$, so
$$\tau_j \le \mu_{2j}, \qquad 1\le j\le r.$$
 Recalling that $c(W)=\dim_{\mathbb{C}} Z =\sum_{j=1}^r \tau_j$, 
$$c(W) = \sum_{j=1}^r \tau_j \le \sum_{j=1}^r \mu_{2j} \le \sum_{j=1}^{\lfloor s/2\rfloor}\mu_{2j},$$
Here $2r\le s$, since $\nu_{2r}=\tau_r>0$ and $\nu_{2r}\le\mu_{2r}$. The last inequality adds nonnegative terms.
\end{proof}

\end{document}